\theoremstyle{plain}
\newtheorem{theorem}{Theorem}[section]
\newtheorem*{theorem*}{Theorem}
\theoremstyle{remark}
\newtheorem{remark}[theorem]{Remark}
\newtheorem{example}[theorem]{Example}
\theoremstyle{plain}
\newtheorem{corollary}[theorem]{Corollary}
\newtheorem{lemma}[theorem]{Lemma}
\newtheorem*{lemma*}{Lemma}
\newtheorem{proposition}[theorem]{Proposition}
\newtheorem{definition}[theorem]{Definition}
\newtheorem*{definition*}{Definition}
\newtheorem{hypothesis}[theorem]{Hypothesis}
\numberwithin{equation}{section}
\newenvironment{claim}[1]{\par\noindent\underline{Claim#1.}\\}{}
\newenvironment{claimproof}[1]{\par\noindent\textit{Proof of Claim #1:}}{\hfill $\qed$}
 \font\Bbbten=msbm10 \font\Bbbseven=msbm7 \font\Bbbfive=msbm5
\def\R{{\mathbb R}}
\def\N{{\mathbb N}}
\def\E{{\mathbb E}}
\def\P{\mathbb P}
\def\bd{\par\medskip\noindent\begin{definition}\label}
\def\ed{\par\medskip\noindent\end{definition}}
\def\bt{\par\medskip\noindent\begin{theorem}\label}
\def\et{\end{theorem}}
\def\be{\begin{equation}\label}
\def\ee{\end{equation}}
\def\bex{\vskip0.05cm\begin{example}\label}
\def\eex{\end{example}}
\def\<{\langle} 
\def\>{\rangle}
\def\bl{\begin{lemma}\label}
\def\el{\end{lemma}}
\def\bexe{\begin{exercise}\label}
\def\eexe{\end{exercise}}
\def\bpr{\begin{proposition}\label}
\def\epr{\end{proposition}}
\def\bc{\begin{corollary}\label}
\def\ec{\end{corollary}}
\def\br{\begin{remark}\label}
\def\er{\end{remark}}
\def\mc{\mathscr}
\def\bal{\[\begin{aligned}}
\def\eal{\end{aligned}\]}
\def\vs{\vskip0.2cm\noindent}
\newcommand*{\bracket}[1]{\langle#1\rangle}
\newcommand{\mcB}{\mathcal{B}}
\newcommand*{\pairing}[3]{\sideset{_{#1}^{}}{_{#3}^{}}{\mathop{\bracket{{#2}}}}}
\newcommand*{\hscalar}[2]{\bracket{{#1},{#2}}_{H}}
\def\phi{\varphi}
\newcommand{\lb}{\langle}
\newcommand{\rb}{\rangle}
\renewcommand{\d}{\delta}
\renewcommand{\P}{{\mathbb P}}
\def\be{\begin{equation}\label}
\def\ee{\end{equation}}
\def\bd{\begin{definition}\label}
\def\ed{\end{definition}}
\def\bt{\begin{theorem}\label}
\def\et{\end{theorem}}
\def\bpr{\begin{proposition}\label}
\def\epr{\end{proposition}}
\def\bl{\begin{lemma}\label}
\def\el{\end{lemma}}
\def\bal{\[\begin{aligned}}
\def\eal{\end{aligned}\]}
\def\vp{\varphi}
\def\ve{\varepsilon}
\def\tc{\tau_\kappa^{\mc C}}
\def\tu{\tau_\kappa^{\mc U}}
\def\tm{\tau_\kappa^{\mc M}}
\def\ck{C_\kappa(E)}
\newcommand{\la}{\left(}
\newcommand{\ra}{\right)}
\newcommand*{\zx}{\color{red}}
\newcommand*{\xz}{\color{black}}
\def\ep{\varepsilon}
\def\ka{\kappa}
\newcommand{\EE}{\mathcal{E}}
\def\author@andify{%
  \nxandlist {\unskip ,\penalty-1 \space\ignorespaces}%
    {\unskip {} \@@and~}%
    {\unskip \penalty-2 \space \@@and~}%
}
\begin{document}

\title[Operator semigroups in mixed topologies]
{Operator semigroups in the mixed topology and the infinitesimal description of Markov processes}

\author{Ben Goldys}
\address{School of Mathematics and Statistics\\
The University of Sydney\\
Sydney 2006, Australia} \email{beniamin.goldys@sydney.edu.au}
\author{Max Nendel}
\address{Center for Mathematical Economics\\
Bielefeld University\\
33615 Bielefeld, Germany}
\email{max.nendel@uni-bielefeld.de}
\author{Michael R\"ockner}
\address{Faculty of Mathematics\\
Bielefeld University\\
33615 Bielefeld, Germany\vspace*{-0.9em}}
\address{ and Academy of Mathematics and Systems Science, CAS, Beijing, China}
\email{roeckner@math.uni-bielefeld.de}
\thanks{The authors thank Markus Kunze, Karsten Kruse, and Michael Kupper for valuable comments and remarks related to this work.\ The first named author was supported by the Australian Research Council Discovery Project DP120101886.\ The second and third author were supported by the Deutsche Forschungsgemeinschaft (DFG, German Research Foundation) - SFB 1283/2 2021-317210226.\ The second and third named authors would like to thank the University of Sydney for the hospitality during a very pleasant stay in November and December 2022.}

\keywords{Markov process, stochastic (partial) differential equation, mixed topology on weighted spaces of continuous functions, strongly continuous semigroup, upper semicontinuous weight, infinitesimal generator,
Markov uniqueness, viscosity solution, Fokker-Planck-Kolmogorov equation, generalized Mehler semigroups, Levy-Khintchin representation}

\subjclass[2020]{Primary: 47D06; 47H20; 60H10; 60H15; 60J25; 60J35 Secondary: 35D40; 47J35}

\begin{abstract}
We define a class of not necessarily linear $C_0$-semigroups $(P_t)_{t\geq0}$ on $C_b(E)$ (more generally, on $C_\kappa(E):=\frac{1}{\kappa} C_b(E)$, 
for some bounded function $\kappa$, {which is the pointwise limit of a decreasing sequence of continuous functions}) equipped with the mixed topology $\tau_1^\mathscr{M}$ for a large class of topological (not necessarily Polish) state spaces $E$.\ If these semigroups are linear, classical theory of operator semigroups on locally convex spaces as well as the theory of bicontinuous semigroups apply to them. In particular, they are infinitesimally generated by their generator $(L, D(L))$
and thus reconstructable through an Euler formula from their strong derivative at zero in $(C_b(E), \tau_{1}^{\mathscr M})$.
In the linear case, we prove that such $(P_t)_{t\geq0}$ can be characterized as integral operators given by measure kernels satisfying certain tightness properties.\
As a consequence, transition semigroups of Markov processes are $C_0$-semigroups on $(C_b(E), \tau_{1}^{\mathscr M})$,
if they leave $C_b(E)$ invariant and they are jointly weakly continuous in space and time. Hence, they can be reconstructed from their strong derivative at zero and thus have a fully infinitesimal description.\ This solves an open problem for Markov processes.\
We show that our results apply to a large number of Markov processes, e.g., those given as the laws of solutions to SDEs and SPDEs,
including the stochastic 2D Navier-Stokes equations and the stochastic fast and slow diffusion porous media equations.\
Furthermore, we introduce the notion of a Markov core operator $(L_0, D(L_0))$ for the above generators $(L, D(L))$
and prove that uniqueness of the Fokker-Planck-Kolmogorov equations corresponding to $(L_0,D(L_0))$ for all Dirac initial conditions implies that
$(L_0,D(L_0))$ is a Markov core operator for $(L,D(L))$.\
As a consequence, we identify the Kolmogorov operators of a large number of SDEs on finite and infinite dimensional state spaces
as Markov core operators for the infinitesimal generators of the $C_0$-semigroups on $(C_\kappa(E),\tau_{\kappa}^{\mathscr M})$ given by their transition semigroups.\
Furthermore, if each $P_t$ is merely convex, we prove that $(P_t)_{t \geq 0}$ gives rise to viscosity solutions to the Cauchy problem given by its associated (nonlinear) infinitesimal generator.\
We also show that value functions of optimal control problems, both, in finite and infinite dimensions are particular instances of convex $C_0$-semigroups on $(C_\kappa(E),\tau_{\kappa}^{\mathscr M})$.
\end{abstract}

\maketitle
\today



\section{Introduction}
One motivation of this paper is an old problem in the theory of Markov processes, which we shall describe at first.\
The literature on Markov processes is huge.\
Here, we only refer to a selection of pioneering and/or fundamental books on the subject and to the references therein,
as, e.g., \cite{bliedtner},\cite{blumenthal},\cite{dynkin},\cite{ethier},\cite{freidlin},\cite{fukushima},\cite{kolokoltsov},\cite{liggett},\cite{rogers},\cite{sharpe},\cite{stroock}.\ We briefly recall the definition of a Markov process:
Let $(E, \mathcal{B})$ be a measurable space and, for each $x \in E$, let $(\Omega, \mathcal{F}, (\mathcal{F}_{t})_{t \geq 0}, \P_x)$ be a filtered probability space and $X(t) \colon \Omega \to E \; \mathcal{F}_t/\mathcal{B}$-measurable maps, $t \geq 0$,
such that $\P_x [X(0) = x] = 1$.
Then the tuple $ \mathbb{M} := (\Omega, \mathcal{F}, (\mathcal{F}_{t})_{t \geq 0},(X(t))_{t \geq 0}, (\P_x)_{x \in E})$ is called
a (time-homogeneous) \textit{Markov process} with state space $E$, if it satisfies the \textit{Markov property},
i.e., for all $x \in E, A \in \mathcal{B}, t, s \geq 0$,
\begin{align}\label{eq:1.1}
\P_x [X(s+t) \in A | \mathcal{F}_s] = \P_{X(s)}[X(t) \in A] \quad \P_x \text{-a.s.},
\end{align} 
where $\P_x [\; \cdot \;| \mathcal{F}_s]$ denotes the conditional probability of $\P_x$ given $\mathcal{F}_s$.
Its corresponding transition semigroup of probability kernels is defined by the time marginal laws of $\P_x$ under $X(t), t \geq 0$, i.e.,
\begin{align}\label{eq:1.2}
p_t(x, dy) \coloneqq (\P_x \circ X(t)^{-1})(dy), \quad x \in E, t \geq 0.
\end{align}
Usually, one also assumes some path regularity on $X(t)$, $t \geq 0$,
by considering topological state spaces $E$ together with the corresponding Borel $\sigma$-algebra $\mathcal{B} := \mathcal{B}(E)$ and assuming that the map $[0, \infty)\; \ni \; t \mapsto X(t)\in E$ is right-continuous.
Define for $f \colon E \to \R$, bounded, $\mathcal{B}$-measurable, 
\begin{align}\label{eq:1.3}
P_t f(x) := \int_E f(y)\;p_t(x, dy) = \E_x [f(X(t))], \quad x \in E, t \geq 0,
\end{align}
where $\E_x$ denotes the expectation with respect to $\P_x$.
Then the Markov property \eqref{eq:1.1} implies the semigroup property
\begin{align}\label{eq:1.4}
P_{t+s} f(x) = P_t (P_s f)(x), \quad x \in E, t,s \geq 0.
\end{align}

A common very natural assumption, which is fulfilled in many situations (in particular, where $\P_x$, $x \in E$, are the laws of the solutions of a stochastic differential equation (SDE) with respective initial data $x \in E$ and where $E$ is, say a Banach space or just $\R^d$) is the so-called \textit{Feller property}, i.e., 
\begin{align}\label{eq:1.5}
P_t f \in C_b(E), \text{\;if\;} f \in C_b(E), \quad t \geq 0.
\end{align}
Here $C_b(E)$ denotes the set of all bounded real-valued continuous functions on $E$. Let $\mathcal{P}(E)$ denote the set of all probability measures an $(E, \mathcal{B}(E))$. Then \eqref{eq:1.5} means:
\begin{align}\label{eq:1.6}
E \ni x \mapsto p_t (x, dy) \in \mathcal{P}(E) &\text{\;is continuous in the weak topology} \\ 
	&\text{on $\mathcal{P}(E)$ for all $t \geq 0$.}\nonumber
\end{align}
By the assumed right continuity of sample paths and by \eqref{eq:1.5} we also have 
\begin{align}\label{eq:1.7}
[0, \infty) \ni t \mapsto p_t (x, dy) \in\; &\mathcal{P}(E) \text{\;is right continuous in the weak topology on}\\
	&\text{ $\mathcal{P}(E)$ for all $x \in E$.} \nonumber
\end{align}

It is well-known that, if we consider $C_b(E)$ with its supremum norm $\|\cdot\|_\infty$, then $t \mapsto P_t f$
is (in general) not continuous at $t = 0$ for all $f \in C_b(E)$,
i.e., $(P_t)_{t \geq 0}$ is not a $C_0$-semigroup on $(C_b(E), \|\cdot\|_\infty)$.\\

If $E$ is metric space, then the next natural choice is the space $UC_b(E)$ of bounded uniformly continuous functions which,
when endowed with the norm $\|\cdot\|_\infty$, is a closed subspace of $C_b(E)$.
It turns out that the gain is very limited.
It can be shown that if $E$ is a separable Hilbert space and $\la P_t\ra_{t\geq 0}$ is a transiton semigroup of an $E$-valued Wiener process
that $\la P_t\ra_{t\geq 0}$ is a $C_0$-semigroup on $UC_b(E)$, see Proposition 3.5.1 in \cite{dz3}.
This result can be easily extended to a general L\'evy process.
However, the transition semigroup of an Ornstein-Uhlenbeck process in $E=\R$, while it turns out to leave the space $UC_b(\R)$ invariant,
is not strongly continuous there, see Example 6.1 in \cite{cerrai} and Theorem 2.1 in \cite{neerven-Zabczyk}.\
The latter result also implies that the transition semigroup of a general Ornstein-Uhlenbeck process with non-zero drift is never strongly continuous on $C_b(E)$. 
\\

Hence, the theory of $C_0$-semigroups on Banach spaces (see e.g. \cite{pazy},\cite{engel}) does not apply.
If it did, $P_t, t \geq 0$, would be uniquely determined by its derivative at $t = 0$, i.e.,
\begin{align}\label{eq:1.8}
L f:= \frac{d}{dt}_{\upharpoonright t=0} P_t f = \|\cdot\|_\infty - \lim_{t \to 0} \frac{1}{t}(P_t f - f), \quad f \in D(L),
\end{align}
which defines a linear operator $L \colon D(L) \subset C_b(E) \to C_b(E)$ with $D(L)$ being the set of all $f \in C_b(E)$ for which the limit in \eqref{eq:1.8} exists. In this case $P_t, t \geq 0$, can be recalculated from the operator $(L, D(L))$, called the \textit{infinitesimal generator} of $(P_t)_{t \geq 0}$, through Euler's formula. But as said, this is in general not possible on $(C_b(E), \|\cdot\|_\infty)$.\\

A way out of this, which only works if $E$ is locally compact
(hence excludes, e.g., that $E$ is an infinite dimensional Banach space,
which in turn are the typical state spaces for solutions $X(t)$, $t \geq 0$, to stochastic partial differential equations (SPDEs) or measure-valued Markov processes)
is to replace \eqref{eq:1.5} by
\begin{align}\label{eq:1.9}
P_t f \in C_\infty(E), \text{\;if\;} f\in C_\infty(E),\quad t \geq 0,
\end{align}
where $C_\infty(E)$ denotes the subset of all elements in $C_b(E)$ which vanish at infinity. $(P_t)_{t \geq 0}$,
satisfying \eqref{eq:1.9} are called \textit{Feller semigroups} in the literature, which sometimes leads to confusion,
since the much weaker property \eqref{eq:1.5} is usually called Feller property and the latter makes sense on general topological spaces (see, e.g., \cite{priola}).
But, if $E$ is locally compact and \eqref{eq:1.9} holds, there are a large number of examples,
for which $(P_t)_{t \geq 0}$ is a $C_0$-semigroup on $(C_\infty(E), \|\;\cdot\;\|_\infty)$
and thus uniquely determined by and reconstructable from its infinitesimal generator $(L, D(L))$, i.e.\ from its strong derivative at zero (see e.g.\ \cite{engel}).\
This is usually expressed by the symbolic writing $P_t = e^{tL}, t \geq 0$.
On the other hand, condition \eqref{eq:1.9} is very strong and in general, of course, not fulfilled, even if $E = \R^d$.\\

Another approach is to avoid the $C_0$ (i.e., strong continuity) property and associate to $(P_t)_{t \geq 0}$ an operator $(L, D(L))$, also called \textit{generator} of $(P_t)_{t \geq 0}$, which is obtained by inverting the resolvent of $(P_t)_{t \geq 0}$, which in turn is given by the Laplace transform of $(P_t)_{t \geq 0}$ (see, e.g., \cite{priola}). But this definition of generator uses the whole semigroup $(P_t)_{t \geq 0}$ and is thus definitely not an \textit{infinitesimal} generator of $(P_t)_{t \geq 0}$.\\

Finally, another way out is to replace $C_b(E)$ by an $L^p(E,\mu)$-space, $p \in [1, \infty)$,
for some suitable reference measure $\mu$ on $(E, \mathcal{B}(E))$ (e.g., an invariant measure for $(P_t)_{t \geq 0}$).\
Then $(P_t)_{t \geq 0}$ extends to a $C_0$-semigroup on $L^p(E,\mu)$,
which has a true \textit{infinitesimal} generator there (see e.g.\ \cite{R}, \cite[Section 4]{BKR}, and also \cite{fukushima})
for symmetrizing measures $\mu$.\
Clearly, a symmetrizing or invariant measure does not exist in general for $(P_t)_{t \geq 0}$.
In \cite[Proposition 2.4]{RT}, however, it was proved that a natural reference measure $\mu$ always exists
so that the transition semigroup $(P_t)_{t \geq 0}$ of a Markov process $\mathbb{M}$ as above extends to a $C_0$-semigroup on $L^p(E,\mu)$.
But this measure $\mu$ again is contructed through the resolvent of $(P_t)_{t \geq 0}$, hence again uses the whole semigroup $(P_t)_{t \geq 0}$.
So, the infinitesimal generator of $(P_t)_{t \geq 0}$, extended to a $C_0$-semigroup on $L^p(E,\mu)$, is not really "infinitesimal".
In addition, the analysis of this extension of $(P_t)_{t \geq 0}$, depends on the measure $\mu$ and statements can always be only made $\mu$-a.e.,
and the measure $\mu$ is in no sense unique.\\

So, it has been an open problem whether the transition semigroup of a general Markov process $\mathbb{M}$ as above, which has the Feller property \eqref{eq:1.5}, is infinitesimally generated by its strong derivative at zero and can be reconstructed from the latter through an Euler formula in a ``suitable'' topology on $C_b(E)$. It turns out that such a "suitable" topology is
the well-known mixed topology $\tau_{1}^{\mc M}$ on $C_b(E)$,
i.e. the strongest locally convex topology on $C_b(E)$ which on $\|\cdot\|_\infty$-bounded subsets of $C_b(E)$
coincides with the topology of uniform convergence on compact subsets of $E$ (see Section 2 and Appendix \ref{app.A} for details).\\

The first main contribution of this paper is to identify a notion of (not necessarily linear) $C_0$-semigroups on $(C_b(E),\tau_1^{\mc M})$ (defined solely in terms of $\tau_1^{\mathscr M}$), which, in the linear case, provides a solution to the above problem for a large class of Markov processes.\ The latter follows from a characterizing representation for such $C_0$-semigroups by measure kernels satisfying certain compactness conditions, proved in Theorem \ref{ts1} below.\ As a second main result, we show that, in the nonlinear case, they solve nonlinear partial differential equations of HJB-type in a viscosity sense on general state spaces, cf.\ Theorem \ref{thm.viscosity}.\\

 A sufficient condition that the transition semigroup $(P_t)_{t\geq0}$ of a Markov process as above is such a $C_0$-semigroup is, for instance, the following quite general and in many cases checkable condition (cf.\,\eqref{eq:1.6} and \eqref{eq:1.7} above):
\begin{align}\label{eq:1.10}
[0, \infty) \times K \ni (t, x) \mapsto &\; p_t(x, dy) \in \mathcal{P}(E) \text{\;is continuous in the weak topology on\;}\\
& \mathcal{P}(E) \text{\;for all compact\;} K \subset E, \nonumber
\end{align}
see Proposition \ref{prop:3.5} below. In fact, this is true for very general (not necessarily Polish) state spaces $E$ (see Hypothesis 2.1 below).
So, in such a very general case, we prove that the transition semigroup of a Markov process with right continuous sample paths is uniquely determined
by its strong derivative at zero with respect to the mixed topology $\tau_1^\mc{M}$ on $C_b(E)$ and can be reconstructed from it through an Euler formula (as a consequence of Proposition \ref{pro_gen1} (f)).\\

We would like to mention here that for a special class of stochastic evolution equations on a Hilbert space (see the fundamental book \cite{dz} for the general theory),
the strong continuity of the transition semigroups of their solutions at $t=0$ in the mixed topology was first proved in \cite{gk}
(and the reference therein).
Paper \cite{gk} was a strong motivation for the present paper. The necessity to relax the norm topology on $C_b(E)$ has been well known in the SPDE community and the problem was approached using many different constructions. {The first step towards solving this problem was made in the papers \cite{cerrai},\cite{cg}.  
	Their approach was further developed in \cite{priola} and \cite{dz3},}
but without using an 
underlying topology on $C_b(E)$  making the semigroups of interest strongly continuous. {Therefore, the \textit{infinitesimal} generator could not be identified via the formula analogous to \eqref{eq:1.8}: 
	\[Lf=\lim_{t\to 0}\frac{P_tf-f}{t}\]
	with convergence holding in the underlying topology of the space $C_b(E)$. 
}
\\

For Markov processes arising as laws of solutions to S(P)DEs, whose coefficients are typically unbounded, it is often more suitable to consider its transition semigroup on the space $C_\kappa(E)$ {of all continuous functions $\varphi\colon E\to \R$ with weighted supremum norm $\Vert \vp \Vert_\kappa:=\|\kappa \vp\|_\infty<\infty$ for a suitable} {bounded upper semi}continuous weight function $\kappa\colon E \to (0, \infty)$ and consider the mixed topology $\tm$ on $C_\kappa(E)$. Therefore, we prove the above results for general (not necessarily Markov or positivity preserving) $C_0$-semigroups on $(C_\kappa(E), \tm)$, {where $\kappa$ is assumed to be the pointwise limit of a decreasing sequence of bounded countinuous functions and to be lower bounded on each compact by a positive constant (depending on the compact). We point out that, while the case $\kappa\equiv 1$ has been extensively discussed in topological measure theory, the case of a general $\kappa$ has not been considered in the context of the mixed topology. If $\kappa$ is continuous, many results transfer using the fact that $C_\kappa (E)= \frac1\kappa \cdot C_b(E)$ and that $\vp \mapsto \kappa \vp$ is an isomorphism between $C_\kappa(E)$ and $C_b(E)$ endowed with the respective norm topology, mixed topology, and topology of uniform convergence on compacts. However, in many examples of interest $\kappa$ cannot be chosen as a continuous function but rather upper semicontinuous with additional properties.\ Since for noncontinuous $\kappa$ the map $\vp \mapsto \kappa \vp$ is not an isomorphism anymore, transferring the classical results from the case $\kappa\equiv1$ to general weight functions $\kappa$ as described above is a delicate issue.\ Relying on measure-theoretic rather than topological arguments, the Appendix \ref{app.A} contains a detailed account how to incorporate this class of noncontinuous weight functions into our setup.}\\ 

In the linear case, our $C_0$-semigroups fall into the class of $C_0$-semigroups on locally convex spaces, see \cite{yosida}, \cite{kraaij}, hence are infinitesimally generated by their strong derivative at zero in $\tm$. Furthermore, we prove that they also fall into the class of abstract bicontinuous semigroups on Banach spaces, cf. \cite{kuhnemund}, though their definition includes an exponential $\|\cdot\|_\kappa$-bound for the semigroup, hence are not defined only using the mixed topology. But this norm bound can be proved to hold for our $C_0$-semigroups, see \eqref{eq:3.2}. Also in the definition of the infinitesimal generator of a bicontinuous semigroup a $\|\cdot\|_\kappa$-bound is required. However, by Theorem \ref{ts2} below, it follows that our infinitesimal generator, which is defined purely in terms of the mixed topology on $\ck$ (see Defnition \ref{def_lw}), automatically satisfies this $\|\cdot\|_\kappa$-bound. Hence, by \cite[Theorem 5.6]{farkas-budde}, resp. \cite{kuhnemund} our $C_0$-smigroups, indeed, can be recalculated from its $\tm$-strong derivative at zero through an Euler formula. A different yet related concept is that of semigroups in norming dual pairs as studied in \cite{kunze}. We refer to Remarks \ref{rem.kunze} and \ref{rem:5.6} below for more details.\\

Another contribution of this paper is to provide a (surprisingly) large class of examples of Markov processes, for which condition \eqref{eq:1.10} on finite and infinite dimensional state spaces holds, see Section \ref{sec:examples}. We start with Markov processes, which are the laws of solutions to SDEs on Hilbert spaces $H$ (taking the role of $E$),
including, e.g., the 2D-stochastic Navier-Stokes equations as well as stochastic (fast and slow diffusion) porous media equations
(see Section \ref{Section 4.1}). Here we consider both the norm topology on $H$ and (in Section \ref{sec:bw}) also the $bw$-topology on $H$.
Furthermore, we look at a class of SPDEs with Levy noise on Banach spaces $E$, more precisely SDEs of Ornstein-Uhlenbeck (O-U) type,
but driven by Levy noise (see equation \eqref{eq:4.17'}).
Their corresponding transition semigroups, called \textit{generalized Mehler semigroups},
also turn out to be $C_0$-semigroups on $(C_b(E), \tau_1^\mc{M})$ both when $E$ is considered with the norm topology (see Section \ref{sec:norm.top}) and,
provided $E$ is reflexive, also with the $bw$-topology (see Section \ref{Section 4.4}).
The interesting feature of the $bw$-topology is that in this case $C_b(E)$ consists of all bounded sequentially weakly continuous functions on $E$ and supremum norm balls are relatively $bw$-compact. \\


Section \ref{Section5.1} is devoted to the \textit{infinitesimal generator} $(L, D(L))$ of a
$C_0$-semigroup on $(C_\kappa(E), \tm)$. We introduce the notion of \textit{Markov core operators} for $(L,D(L))$ (see Definition \ref{def:5.7}) and prove that a sufficient condition for being a Markov core operator $(L_0,D(L_0))$ for $(L,D(L))$ is,
that the Fokker-Planck-Kolmogorov equation for $(L_0,D(L_0))$ has a unique solution for all Dirac measures $\delta_x, x \in E$ (see Theorem \ref{prop:5.9}). This is another main result of this paper, because it can be applied to a number of (highly nontrivial) examples, where we identify the Kolmogorov operator of a large class of SDEs on $\R^d$ (see Section \ref{ex:5.9}) or on a Hilbert space $H$ (see Section \ref{ex:L3}) as a Markov core operator for the infinitesimal generator $(L, D(L))$
of the $C_0$-semigroup on $(C_\kappa(\R^d), \tm)$ and $(C_\kappa(H), \tm)$, respectively, given by the transition semigroup of the SDE's solutions.
Furthermore, in Section \ref{ex:Generalized Mehler semigroups on Hilbert spaces} using results from \cite{lescot}, we identify the Kolmogorov operator of SDE \eqref{eq:4.17'}, i.e., the SDE for the O-U-process with Levy noise on a Hilbert space $E$,
which is a pseudo-differential operator (see equation \eqref{eq:5.17}),
as a core operator for the generator $(L, D(L))$ of the corresponding generalized Mehler semigroup on $(C_b(E), \tau_{1}^{\mc M})$.\\

%

Our main result concerning nonlinear $C_0$-semigroups on $(C_\kappa(E), \tm)$ states that they
give rise to viscosity solutions (see Definition \ref{Def. 6.1}) to the Cauchy problem given by its (non-linear) infinitesimal generator (see Theorem \ref{Theorem6.2}), which is of HJB-type. Moreover, we show that every convex Markov $C_0$-semigroup on $(C_b(E), \tm)$ gives rise to a notion of a nonlinear Markov process under a convex expectation (see Theorem \ref{stochrep}).\
This provides an analytic counterpart to the recent investigations of $G$-expectations and nonlinear Markov processes, see \cite{peng2010nonlinear}.
The latter appear in the context of financial modeling in terms of a Brownian motion under volatility uncertainty. For generalizations to uncertainty in the generators of Levy processes and a class of Feller processes, we refer to \cite{NutzNeuf},\cite{MR3941868},\cite{dkn0},\cite{NR}.\
In this context and, more generally, in Mathematical Finance, the so-called continuity from above on $C_b(E)$ of related risk measures plays an important role.\ {We point out that continuity from above is, for convex increasing operators, equivalent to continuity in the mixed topology, see \cite{nendel}.}\\

The paper is organized as follows.\\

Section \ref{section2} contains our setup and necessary definitions, in particular, those concerning the mixed topology. In Section \ref{sec:semigroups}, we introduce our notion of $C_0$-semigroups on
$(C_\kappa(E), \tm)$ (see Definition \ref{def:3.1_semigroup}) and provide a characterizing measure representation for these in the linear case (see Theorem \ref{ts1}). Section \ref{sec:examples} contains the previously described examples. Section \ref{Section5.1} is devoted to the infinitesimal generator of our (possibly non-linear) $C_0$-semigroups on $(\ck,\tau_\kappa^{\mathcal M})$ and the study of Markov core operators, which is then applied in Section \ref{ex:5.9} and Section \ref{ex:L3}. In Section \ref{ex:Generalized Mehler semigroups on Hilbert spaces}, in the case of generalized Mehler semigroups, the infinitesimal generator is identified as a pseudo-differential operator. In Section \ref{Section6}, we study convex $C_0$-semigroups and show that they provide viscosity solutions to HJB-type differential equations. 
Section \ref{sec.control} contains examples from stochastic optimal control as applications of the results in Section \ref{Section6}, both on finite (Section \ref{dynamics}) and infinite dimensional (Section \ref{Section7.2}) state spaces.\ {In Appendix \ref{app.A0} we show that the dual space of a separable reflexive Banach space endowed with the bw$^\star$-topology satisfies our global Hypothesis \ref{hyp_space}.} For the reader's convenience, we recall some basic facts on the mixed topology in Appendix \ref{app.A},  and give corresponding references there. Since $\kappa$ is not continuous, many classical results from topological measure theory do not apply directly, since they are only proved for $\kappa = 1$. Therefore, we also include proofs of their generalizations to the case of upper semicontinuous weight functions $\kappa$, in Appendix \ref{app.A}.

\section{Basic definitions and setup}\label{section2}
In this section, we recall basic definitions and some properties of the mixed topology on weighted spaces of continuous functions $\vp\colon E\to\R$. A very general definition of this topology was introduced in \cite{wiweger} and, in the special case of the space of bounded continuous functions defined on a completely regular topological space $E$, it was studied in topological measure theory as one of the strict topologies, see \cite{wheeler1}. In this paper we restrict our attention to a special class of completely regular topological spaces $E$, but many results presented in this section, when appropriately reformulated, hold  for larger classes of spaces, or even for every completely regular Hausdorff topological space. \\
The following hypothesis about the space $E$ is assumed to hold throughout the paper and will not be enunciated again. 
\begin{hypothesis}\label{hyp_space}
The space $E$ is a completely regular Hausdorff topological space, such that 
\begin{enumerate}
\item
 compact subsets of $E$ are metrizable, 
 \item
 the Borel $\sigma$-algebra $\mc B(E)$ is identical with the Baire $\sigma$-algebra ${\rm Ba}(E)$, i.e.\ the $\sigma$-algebra generated by $C_b(E)$. 
 \item
 a function $\vp:E\to\R$ is continuous if and only if $\vp$ is continuous on every compact subset of $E$. 
 \end{enumerate}
 \end{hypothesis}
 \begin{remark}\label{rem2.2}\
\begin{enumerate}
	\item[a)] Topological spaces that satisfy condition (3) are known as $k_f$- or $k_{\R}$-spaces, see \cite{jarchow} or \cite{wheeler1}.
	\item[b)] Polish spaces satisfy all three conditions of Hypothesis \ref{hyp_space}.
	\item[c)] Let $E=F^\star$ be the dual of a separable Banach space $F$ endowed with its bw$^\star$ topology. Then $E$ satisfies Hypothesis \ref{hyp_space}, see Appendix \ref{app.A0} for details. In particular, every separable reflexive Banach space endowed with the bw$^\star$-topology satisifies Hypothesis \ref{hyp_space}. In this latter case we also use the term $bw$-topology (i.e. we drop the $\ast$).
\end{enumerate}
\end{remark}
Throughout, we consider a bounded weight function $\kappa\colon E\to (0,\infty)$ that satisfies the following 
\begin{hypothesis}\label{hyp-kappa}\
	\begin{enumerate}
	\item There exist $\kappa_j \in C(E),\ j\in\N$, such that 	$\kappa_j \downarrow \kappa$ on $E$.
	\item $\inf_{x\in C}\kappa(x)>0$ for every compact $C\subset E$.
	\end{enumerate}
\end{hypothesis}

\begin{remark}
If $E$ is a separable Banach space, or if $E$ is the dual of a separable Banach space equipped with the $bw^\ast$-topology, then Hypotesis \ref{hyp-kappa}(1) holds, if $\kappa$ is upper-semicontinuous (see \cite{tong}).
\end{remark}

Let $\ck$ denote the space of continuos functions $\vp:E\to\R$ endowed with the norm 
\[\|\vp\|_\kappa=\sup_{x\in E}|\kappa(x)\vp(x)|<\infty\,.\]
By Hypothesis \ref{hyp-kappa}(2) and Hypothesis \ref{hyp_space}(3) $\ck$ is a Banach space. Clearly, $C_\kappa(E)=C(E) \cap \frac{1}{\kappa}B_b(E)$, where {$B_b(E)$ denotes the space of all $\varphi\colon E \rightarrow \R$, which are bounded and Borel measurable.} If $\kappa$ is continuous, then $C_\kappa (E)= \frac{1}{\kappa}C_b(E)$. For $\kappa\equiv 1$ we use the notation $C_b(E)$ instead of $C_1(E)$. Since $\kappa$ is bounded, we have $C_b(E) \subset C_\kappa(E)$.
\vs
Let $\tu$ and $\tc$ denote respectively, the norm topology on $\ck$ and the topology of uniform convergence on compacts. The topology $\tc$ on $\ck$ is defined by the family of seminorms
\[p_{C}(\vp)=\sup_{x\in C}|\vp(x)|,\quad C\,\,\mathrm{compact}\,.\]
Note that for every $C$ the seminorm $p_C$ is  equivalent to the seminorm 
\[p_{\kappa,C}(\vp)=\sup_{x\in C}|\kappa(x)\vp(x)|\,,\]
by Hypothesis \ref{hyp-kappa}. 
\vs
 The mixed topology $\tm$ on $C_\kappa(E)$ is  defined in \cite{wiweger} as the strongest locally convex topology on $\ck$ that is identical with $\tc$ on the norm-balls of $\ck$. By definition, 
\[\tc\subset \tm\subset\tu.
\]For the reader's convenience the construction and properties of this topology are collected in the Appendix \ref{app.A}. 
For an arbitrary sequence $\left(C_n\right)$ of compact 
subsets of $E$ 
and a sequence $\left(a_n\right)$ of positive numbers with $\lim_{n\to\infty}a_n=0$, we define the seminorm 
\[p_{\kappa,\left(C_n\right),\left(a_n\right)}(\phi ):=\sup_{n\in \N}\big
(a_np_{\kappa,C_n}(\phi )\big)=\sup_{n\in \N} \sup_{x\in C_n}\big( a_n\kappa(x) |\phi(x)|\big).\]
By Theorem \ref{lem_ck_mixed} in Appendix \ref{app.A} the family of seminorms 
\[\{p_{\kappa,(C_n),(a_n)};\,0<a_n\to 0,\,\, C_n\,\,\mathrm{compact}\}\] 
generates the mixed topology $\tm$ on $\ck$. Note that if $\varphi \in C(E)$ such that $p_{\kappa,(C_n),(a_n)}(\varphi) < \infty$, for every such seminorm, then $\varphi \in C_\kappa(E)$. {Indeed, if it were not, there exists a sequence $(x_n)_{n\in \N}\subset E$ with $\kappa(x_n)|\varphi(x_n)|\geq n^2$, so that $p_{\kappa, (\{x_n\}),(\frac1n)}(\varphi)=\infty$.}

We now introduce the dual objects of $C_\kappa(E)$.  Let $M_b(E)$ denote the space of all signed Radon measures $\mu\colon \mathscr B(E)\to \R$ with $|\mu|(E)<\infty$, where $|\mu|$ stands for the total variation measure of $\mu$.\ 
Recall that, under Hypothesis \ref{hyp_space}, every Baire measure is Borel, and that a Borel measure $\mu\colon \mathscr B(E)\to \R$ with $|\mu|(E)<\infty$ is Radon\footnote{In some papers in topological measure theory this is called a tight measure.} if, for every Borel set $B$ and every $\ep>0$, there exists a compact set $C_\ep\subset B$ such that
\[|\mu|\la B\setminus C_\ep\ra<\ep.\]
A family 
$\mathcal F\subset M_b(E)$ is said to be 
tight if, for every $\ve >0$, there exists a compact $C_\ep\subset E$ such 
that 
\[\sup_{\mu\in\mathcal F}|\mu |(E\setminus C_\ep)<\ve .\]
We denote the space of all Radon measures $\mu$ on $\la E,\mc B(E)\ra$ with
\[\int_E\frac{|\mu|(dx)}{\kappa(x)}<\infty\]
by $M_\kappa(E)$, that is, $M_\kappa(E)=\kappa\cdot M_b(E)$. Let $M_\kappa^+(E)$ be the subset of all nonnegative measures in $M_\kappa (E)$.
If $\mu\in M_\kappa(E)$, then the mapping 
\[
C_\kappa(E)\ni\vp\mapsto \int_E \phi\, d\mu \quad\text{is norm-continuous.}
\]
Throughout, we endow $M_\kappa(E)$ with the narrow topology, i.e., the weakest topology such that, for every $\vp\in C_\kappa(E)$, the mapping 
\[M_\kappa(E)\ni\mu\mapsto \int_E\vp\,d\mu\quad\text{is continuous.}\]
By Theorem \ref{th_md} in Appendix \ref{app.A}, the space $M_\kappa(E)$ endowed with the narrow topology is the topological dual of $\la\ck,\tm\ra$. This is a well-known fact if $\kappa = 1$ (see \cite{haydon}, \cite{Le-Cam}, and also \cite{sentilles}).


\section{Strongly continuous semigroups on spaces of continuous functions with mixed topology}\label{sec:semigroups}

In this section, we introduce the notion of strongly continuous and 
locally equicontinuous semigroups on $\left(C_\kappa(E),\tm\right)$, which we will refer to as $C_0$-semigroups. 

\bd{def.semigroup}\label{def:3.1_semigroup}
A family of (possibly nonlinear) operators $P=(P_t)_{t\geq 0}$ on $C_\kappa(E)$ is called a \textit{semigroup} on $C_\kappa(E)$ if
\begin{enumerate}
	\item[(i)] $P_0 \phi=\phi$ for all $\phi\in C_\kappa(E)$,
	\item[(ii)] $P_sP_t\phi=P_{s+t}\phi$ for all $s,t\geq 0$ and $\phi\in C_\kappa(E)$.
\end{enumerate}
The family  $P$ is called a \textit{$C_0$-semigroup} on $\la\ck,\tm\ra$ if it additionally satisfies:
\begin{enumerate}
	\item[(iii)] The semigroup $P$ is locally uniformly equicontinuous on $\tm$-bounded sets, i.e., for every $T\geq 0$ and every $\tm$-bounded set $B\subset \ck$, the family of operators
	$(P_t)_{0\leq t\leq T}$ is $\tm$-uniformly equicontinuous on $B$.\ More precisely, for every $T\geq 0$, every $\tm$-bounded set $B\subset \ck$, every seminorm $p_{{\kappa,\left(K_n\right),\left(a_n\right)}}$, and all $\ve >0$, there 
	exists a seminorm $p_{\kappa,\left(C_n\right),\left(b_n\right)}$ and $
	\delta >0$ such that, for every 
	$0\leq t\le T$ and $\phi_1,\phi_2\in B$,
	\[p_{\kappa,\left(K_n\right),\left(a_n\right)}\left(P_t\phi_1-P_t\phi_2\right)<\ve
	\quad\mbox{\rm if}\quad p_{\kappa,\left(C_n\right),\left(b_n\right)}(\phi_1-\phi_2 
	)<\delta .\]
	\item[(iv)] The semigroup $P$ is strongly $\tm$-right continuous, i.e., $P_t\phi\to\phi$ in $\tm$
	as $t\to 0$ for every $\phi\in C_\kappa(E)$. More precisely,
	for all $\phi\in C_{\kappa}(E)$ and every seminorm $p_{\kappa,\left(K_n\right),\left(a_n\right)}$,
	\[\lim_{t\to 0}p_{\kappa,\left(K_n\right),\left(a_n\right)}\left(P_t\phi 
	-\phi\right)=0.\]
\end{enumerate}
\ed

\begin{remark}\label{rem2}\
\begin{enumerate}
\item[(i)] We note that (iii) and (iv) imply that $P$ is strongly $\tm$-continuous.
Indeed, let $\vp\in \ck$, and observe that there exists some $h_0>0$ such that
\[
r:=\sup_{h\in [0,h_0]}\|P_h\vp\|_\kappa<\infty.
\]
Otherwise, there would exist a sequence $(h_n)_{n\in \N}\subset (0,\infty)$ with $h_n\to 0$ as $n\to \infty$ and $\|P_{h_n}\vp\|_\kappa\geq n$, which is impossible by (iv) and Proposition \ref{t14}.
Let  $T>0$, $B$ be the closed $\|\cdot\|_\kappa$-ball in $\ck$ with radius $r$,  $p_{\kappa,\left(K_n\right),\left(a_n\right)}$ be a seminorm, and  $\epsilon>0$.
Then, by Proposition \ref{t12} (b) and (iii), there exist a seminorm $p_{\kappa,\left(C_n\right),\left(b_n\right)}$ and $\delta>0$ such that,
for all $t\in [0,T]$ and $\phi_1,\phi_2\in B$,
	\[p_{\kappa,\left(K_n\right),\left(a_n\right)}\left(P_t\phi_1-P_t\vp_2\right)<\ve
\quad\mbox{\rm if}\quad p_{\kappa,\left(C_n\right),\left(b_n\right)}(\phi_1-\phi_2 
)<\delta .\]
Now, let $s,t\in [0,T]$ with $t<s$ and $s-t\leq h_0$. Then, $\vp=P_0\vp$ and $P_{s-t}\vp$ are elements of $B$. By (iv),
	\[p_{\kappa,\left(C_n\right),\left(b_n\right)}\left(P_{s-t}\phi 
-\phi\right)<\delta\]
if $s-t$ is sufficiently small, and therefore,
	\[p_{\kappa,\left(K_n\right),\left(a_n\right)}\left(P_s\phi-P_t\vp\right)=p_{\kappa,\left(K_n\right),\left(a_n\right)}\left(P_tP_{s-t}\phi-P_t\phi\right)<\ve.\]
As a consequence, for every $\varphi\in C_\kappa(E)$, by the continuity of $P_t\varphi$ on $E$ we easily obtain that
for all compact $C\subset E$ the map
\[ [0,\infty)\times C \ni (t,x) \mapsto P_t\varphi(x)\]
is continuous.
\item[(ii)] Now let us consider the case, where each $P_t$ of $P$ is a linear operator on $C_\kappa(E)$.
Then
\begin{align}\label{eq:3.1!}
C_T:=\sup\limits_{t \leq T} \sup\limits_{\|\varphi\|_\kappa \leq 1} \|P_t \varphi\|_\kappa < \infty.
\end{align}
To prove this we first fix $t > 0$ and note that $\varphi \mapsto \Vert P_t \varphi\Vert_\kappa$ is lower semicontinuous on $(C_\kappa(E), \Vert \ \Vert_\kappa)$. Indeed, {the map $\varphi\mapsto \kappa(x)|P_t\varphi(x)|$ is continuous for all $x\in E$. Therefore, $\varphi\mapsto \|P_t\varphi\|_\kappa$ is lower semicontinuous since it is the supremum over continuous functions.} 

 Now, let $\varphi\in C_\kappa(E)$.
 By the above and the uniform boundedness principle it suffices to show that
\begin{align*}
	\sup_{t\leq T} \|P_t\varphi\|_\kappa < \infty.
\end{align*}
If this is not the case, there exist $t_n\in[0,T],\ n\in\N$, such that
\begin{align}\label{eq:3.2'}
\|P_{t_n}\varphi\|_\kappa \geq n.
\end{align}
We may assume that $\lim\limits_{n\to\infty} t_n = t \in [0,T]$.
Hence by part (i) of this Remark
\[\tm -\lim_{n\to\infty} P_{t_n}\varphi = P_t\varphi, \]
 and consequently, by Proposition \ref{t14}, $\sup_{n\in\N} \|P_{t_n}\varphi\|_\kappa < \infty$, which contradicts \eqref{eq:3.2'}.\\ \xz
By the semigroup property \eqref{eq:3.1!} is equivalent to: There exist $M \in [1, \infty)$  and $a \in \R$ such that
 \begin{align}\label{eq:3.2} {\| P_t \phi \|_\kappa} \leq M e^{a t} \| \phi \|_\kappa \quad \text{ for all } \vp \in C_\kappa (E)\text{ and } t \geq 0.
 \end{align}
The equivalence of \eqref{eq:3.1!} and \eqref{eq:3.2} is, of course, also true in the nonlinear case. Furthermore, if $P$ consists of linear operators, then (iii) is equivalent to:\\
For every $T>0$ and every seminorm $p_{\kappa, (C_n), ({b_n})}$, there exist a seminorm $p_{\kappa,(K_n),({a_n})}$ such that
\begin{align}\label{eq:3.1}
p_{{\kappa,(C_n),({b_n})}} (P_t\varphi)\leq  p_{\kappa,(K_n),({a_n})}(\vp)\quad \text{for all }\vp\in C_\kappa(E) \text{ and } t\in[0,T].
\end{align}
\end{enumerate}
\end{remark}

\begin{remark}\label{rem.kunze}
	Theorem \ref{ts1} below forms the starting point of our analysis of semigroups with the Feller property \eqref{eq:1.5}, their infinitesimal characterisations in terms of generators, and our main result on Markov uniqueness. It shows that the space $(\ck,\tm)$ provides a natural framework for such a theory. We point out that there is a vast literature that is concerned with building the theory of $C_0$ (in some sense) semigroups on locally convex spaces. We refer to the introduction for some historical comments. Here, we only mention that if $P=(P_t)_{t\geq 0}$ is a $C_0$-semigroup on $(\ck,\tm)$ of linear operators then it is a strongly continuous and locally equicontinuous semigroup of linear operators on the locally convex space $(\ck,\tm)$ in the sense of \cite{yosida}. The definition of \cite{yosida} is very general and does {not yield} necessary and sufficient conditions for the semigroup to be $C_0$, Markov, or to have the Feller property. 
	{Special cases of Theorem \ref{ts1} can also be deduced from} \cite{kunze}, where a very general approach to the theory of strongly continuous semigroups in  norming dual pairs $(X,Y)$ {has been developed}.
It is worth noting that our definition of a $C_0$-semigroup only requires continuity properties in the mixed topology, and a priori we do not postulate, e.g., exponential growth bounds for the operator norm of $P_t$, weak continuity of the semigroup operators, or integrability conditions for orbits, which, however, is crucial for the definition of the resolvent via Laplace transform in \cite{kunze}.
\end{remark}

\begin{theorem}\label{ts1}
	{Let }$P=(P_t)_{t\geq 0}$  be a semigroup of linear operators on $\ck$.\ Then, the following properties are
	equivalent:
	
	\begin{enumerate}
		\item[(a)] {$(P_t)_{t\geq 0}$} is a $C_0$-semigroup on $\left(C_{\kappa}(E),\tm\right)$. 
		\item[(b)] There exists a family of Borel measures 
		$\left\{\mu_t(x,\,\cdot\, )\colon x\in E,\, t\ge 0\right\}\subset M_\kappa(
		E)$ such that:
		\begin{enumerate}
			\item[(1)] The map $E\ni x\mapsto\mu_t(x,B)$ is $\mathcal{B}(E)$- measurable for every 
			$B\in\mathcal B(E)$ and $t\ge 0$.  
			\item[(2)] For every $t \geq 0, \mu_t(\cdot, dy)$ represents $P_t$, i.e., 
			\begin{align}\label{eq:3.3'}
			P_t\phi (x)=\int_E\phi (y)\mu_t(x,dy) \quad \text{for all }\varphi \in C_\kappa(E), 				x \in E.
			\end{align}
			\item[(3)] For every $T\geq 0$, 
			\[\sup_{t\le T}\sup_{x\in E}\la \kappa(x)\int_E\frac{\left|\mu_t\right|(x,dy)}{\kappa(y)}\ra<\infty\,.\]
			\item[(4)] For every $T\geq 0$ and every compact $C\subset E$, the family \label{ts1(3)}
			of measures 
			\[\left\{\frac{\kappa(x)\left|\mu_t\right|(x,dy)}{\kappa(y)}\colon x\in C,\,t\in [0,T]\right\}\]
			is tight. 
         \item[(5)] For every $x\in E$ and any sequence $(x_n)\subset E$ with $\lim_{n\to\infty}x_n=x$ (in $E$), we have 
            \[\lim_{(t,x_n)\to (0,x)}\mu_t(x_n,\, \cdot\, )=\delta_x\]
            in $M_\kappa(E)$, where $\delta_x$ denotes the Dirac measure with barycenter $x$. 
     \end{enumerate}
\end{enumerate}
\end{theorem}

\begin{proof}
{
$(a)\Rightarrow (b)$: \eqref{eq:3.1} and Theorem \ref{thm_B9} from Appendix \ref{app.A} imply the existence of $\mu_t(x, dy)\in M_\kappa(E), t\in [0,T], x\in E,$ satisfying (2).
Assertion (1) follows from a standard monotone class argument, since $C_b(E)\subset C_\kappa(E),$ because $\kappa$ is bounded, and so by \eqref{eq:3.3'} for all $\varphi \in C_b(E)$
\begin{align*}
E \ni x \rightarrow \int \varphi(y) \mu_t(x, dy) \text{ is } \sigma(C_\kappa(E))-{\text{measurable}}
\end{align*}
and since due to Hypothesis \ref{hyp_space}(2) we have $\mathcal{B}(E)=\sigma(C_\kappa(E))$ (see the proof of Theorem \ref{thm_B9}). To prove (3) let $T \geq 0$. Then by Hypothesis \ref{hyp-kappa} for all $x \in E, t \in [0, T]$
\begin{align*}
\int_E \frac{\vert \mu_t\vert(x,dy)}{\kappa(y)}= \lim_{j \rightarrow \infty} \int_E \frac{\vert\mu_t\vert(x,dy)}{\kappa_j(y)}
\end{align*} 
and since $\frac{1}{\kappa_j} \in C_\kappa(E)$, by (2) and (\ref{t14}) we have
	\begin{align*}
	\int_E \frac{\vert \mu_t\vert(x,dy)}{\kappa_j(y)}	
		&\leq 2\sup \bigg\{\bigg\vert\int_E g(y)\mu_t(x,dy)\bigg\vert : 0 \leq g \leq \frac{1}{\kappa_j} \bigg\}\\
		&= 2 \sup\bigg\{\bigg\vert P_t g(x)\bigg\vert: 0\leq g \leq \frac{1}{\kappa_j}\bigg\}\\
		&\leq \frac{2}{\kappa(x)} \sup \bigg\{\Vert P_t g\Vert_\kappa : 0 \leq g \leq \frac{1}{\kappa_j} \bigg\}\\
		&\leq \frac{2C_T}{\kappa(x)} \sup \bigg\{ \Vert g\Vert_\kappa : 0 \leq g \leq \frac{1}{\kappa_j}\bigg\}\\
		&\leq \frac{2C_T}{\kappa(x)},\\	
	\end{align*}
where $C_T$ is the constant on the left hand side of inequality \eqref{eq:3.1!}. This implies (3).\\

To prove (4) let $T \geq 0, C \subset E, C$ compact. Define $C_l :=C, b_l:= \frac{1}{l}, l \in \N$. By Remark \ref{rem2}(ii) there exists a seminorm $p_{\kappa, (K_n), (a_n)}$ such that \eqref{eq:3.1} holds, where we may assume that $K_n \subset K_{n+1}, n\in \N$. Let $\varepsilon > 0$. We are going to use the following claim: 

\begin{claim}{}\label{Claim_0}
There exists $m\in\N$ such that for every $\varphi \in C_b(E)$ with $0 \leq \varphi \leq 1$ and $\varphi=0$ on $K_m$ we have 
	\begin{align*}
		\sup_{t\in[0,T]} \sup_{x\in C} \kappa(x) \int_E \varphi(y) \frac{1}					{\kappa(y)} \vert \mu_t \vert (x, dy) \leq \varepsilon.
	\end{align*}
\end{claim}
To prove the Claim let $m\in \N$ such that
	\begin{align*}
		\sup_{l \geq m} a_l \leq \frac{\varepsilon}{2}.
	\end{align*}
Then for all $\varphi$ as in the Claim by (2), \eqref{eq:b1_prime}, and \eqref{eq:3.1} we have for $t \in [0, T]$ and every $j \in \N$, since $\varphi \cdot \kappa^{-1}_j \in C_\kappa(E)$,
	\begin{align*}
\sup_{x \in C} \kappa(x) \int_E \varphi(y)\frac{1}{\kappa_j(y)}\vert\mu_t\vert (x,dy)
&\leq\ 2 \sup_{x\in C}
	\kappa(x)\sup \bigg\{ \bigg\vert \int_E g(y)\mu_t(x, dy) \bigg\vert : 0 \leq g \leq \frac{\varphi}{\kappa_j}\bigg\}\\
	&=\ 2 \sup_{x\in C}\kappa(x) \sup \bigg\{\bigg \vert P_t g(x)\bigg\vert : 0\leq g \leq \frac{\varphi}{\kappa_j}\bigg\}\\
	&\leq\ 2\sup\bigg\{\sup_{l \in \N} b_l \sup_{x\in C_l}\kappa(x) \bigg\vert P_t g(x) \bigg\vert : 0 \leq g \leq \frac{\varphi}{\kappa_j}\bigg\}\\
	&\leq\ 2\sup \bigg\{p_{\kappa,(K_n),(a_n)}(g): 0\leq g \leq \frac{\varphi}{\kappa_j}\bigg\}\\
	&\leq\ 2 p_{\kappa,(K_n),(a_n)}\bigg(\frac{\varphi}{\kappa_j}\bigg)=\ 2 \sup_{l \geq m} a_l \sup_{x\in K_l}\bigg(\kappa(x)\frac{\varphi(x)}{\kappa_j(x)}\bigg)\\
	&\leq\ 2\sup_{l\geq m}a_l \leq \varepsilon.
		\end{align*}
Letting $j \rightarrow \infty$ we prove the Claim.\\

Let $t \in [0, T], x \in C$. Since $(\frac{1}{\kappa}\vert \mu_t\vert)(x, dy)$ is Radon, we have 
	\begin{align*}
		\kappa(x) \int_{E \backslash K_m} \frac{1}{\kappa(y)} \vert \mu_t\vert(x, 			dy) 
		= \sup \bigg\{\kappa(x)\int_{\tilde{K}}\frac{1}{\kappa(y)} \vert 				\mu_t\vert(x,dy)
		: \tilde{K} \subset E \backslash K_m, \tilde{K} \text{ compact. 				} \bigg\} 
	\end{align*}
So, let $\tilde{K} \subset E \backslash K_m$ be compact and let $\varphi \in C_b(E), 0\leq \varphi \leq 1$ such that $\varphi = 0$ on $K_m$ and $\varphi = 1$ on $\tilde{K}$. Then by the Claim 
	\begin{align*}
	\kappa(x)\int_{\tilde{K}}\frac{1}{\kappa(y)} \vert \mu_t\vert(x,dy) 
	=\kappa(x) \int_{\tilde{K}} \varphi(y) \frac{1}{\kappa(y)} \vert \mu_t \vert(x,dy) 
	\leq \varepsilon.
	\end{align*}
Hence taking the sup over all compacts $\tilde{K}\subset E\backslash K_m$ we obtain (4).
(5) immediately follows from Definition \ref{def:3.1_semigroup}(iv).
}
\\
 $(b)\Rightarrow (a)$: Assume that $(b)$ is satisfied.
We have to show (iii),(iv) in Definition \ref{def.semigroup}.
In order to show that (iii) is satisfied, let $T>0$.
For $n\in\N$ let $b_n \in (0,\infty)$ {such that $b_n \rightarrow 0$,} and $(C_n)$ be an increasing sequence of compact subsets of $E$. By (4), for every $l\in\N$, there exists an increasing sequence $(K_{l,n})_{n\in \N}$ of compacts in $E$ such that 
  \begin{align}\label{3.4}
  \sup_{t\in[0,T]}\sup_{x\in C_l}\left(\kappa(x) \int_{E\setminus K_{l,n}}\frac{\ |\mu_t|(x,dy)}{\kappa(y)} \right)\leq {2^{-n-l}b_{n+1}} \quad \text{for all } n\in\N.
  \end{align}
   For $n\in\N$ define
  \begin{align*}
  K_n:=\bigcap_{j=n}^\infty K_{ j,n}.
  \end{align*}
  Then $(K_n)$ is an increasing sequence of compacts and for all $n\in\N$, $n \geq l$,
  \begin{align}\label{eq:3.5}
    \sup_{t\in[0,T]}\sup_{x\in C_l}\left(\kappa(x)\int_{E\setminus K_n}\frac{\ |\mu_t|(x,dy)}{\kappa(y)}\right)&\leq \sum_{j=n}^\infty\sup_{t\in[0,T]}\sup_{x\in C_j}\left( \kappa(x)\int_{E\setminus K_{ j,n}}\frac{\ |\mu_t|(x,dy)}{\kappa(y)}\right)\\
     &\leq \sum_{j=n}^\infty 2^{-n-j}b_{n+1}\leq 2^{ -n} b_{n+1} \nonumber
  \end{align}
  by \eqref{3.4}. Hence, in particular, for all $t\in [0,T]$, $l\in \N$, $x \in C_l$
  \begin{align}\label{eq:3.6'}
  \int_{E \backslash \bigcup_{n=1}^\infty K_n} \frac{\vert \mu_t\vert(x,dy)}{\kappa(y)}=0
  \end{align}
  Now, we are going to show \eqref{eq:3.1}. To that end, let $\varphi \in  C_\kappa(E)$. By homogeneity we may assume that 
  \[p_{\kappa,(K_n),( b_n)}(\varphi)=1,\]
  hence
  \begin{align}\label{eq:3.6}
  p_{\kappa,K_n}(\varphi)\leq b_n^{-1} \quad \text{for all }n\in\N.
  \end{align}
  Setting $K_0:=\emptyset$, by (2) for all $t\in[0,T]$ we then have by \eqref{eq:3.6'} and \eqref{eq:3.6} 
  \begin{align*}
  p_{\kappa,(C_l),(b_l)}(P_t\varphi)
 & \leq\sup_{l\in\N} b_l\sup_{x\in C_l}\left(\kappa(x)\int_E|\varphi|(y)\ |\mu_t|(x,dy)\right)\\
  & \leq\sup_{l\in\N} b_l\sup_{x\in C_l}\left(\kappa(x)p_{\kappa,K_l}(\varphi)\int_{K_l}\frac{\vert \mu_t\vert(x,dy)}{\kappa(y)}\right)\\
  &\quad +\sup_{l\in\N} b_l\sup_{x\in C_l}\left(\sum_{n=l}^\infty p_{\kappa,K_{ n+1}}(\varphi)\kappa(x) \int_{K_{n+1}\setminus K_{n}}\frac{|\mu_t|(x,dy)}{\kappa(y)}\right)\\
  &  \leq\sup_{l\in\N}\sup_{x\in C_l}\la\kappa(x)\int_{K_l} \frac{\vert \mu_t\vert (x,dy)}{\kappa(y)} \ra + \sup_{l\in\N} b_l \sum_{n=l}^\infty b_{n+1}^{-1} \sup_{x\in C_l} \left( \kappa(x) \int_{E \backslash K_n}\frac{\vert \mu_t\vert (x,dy)}{\kappa(y)}\right) ,
  \end{align*}
  which, by \eqref{eq:3.5}, is dominated by
 \begin{align*}
  \sup_{t\in [0,T]}\sup_{x\in E}\left(\kappa(x) \int_E \frac{\vert \mu_t\vert (x,dy)}{\kappa(y)} \right)+  \sup_{l\in\N}b_l ,
  \end{align*}
  where by (3) this constant is finite. Hence by the last part of Remark \ref{rem2} (ii), Property (iii) follows.\\
  We proceed to the proof of (iv). Since by Remark \ref{rem2} (ii) we know that \eqref{eq:3.1!} holds, by Proposition \ref{t14}
  we have to show that, for every compact $K\subset E$,
  \begin{align}\label{eq:3.7}
  \lim_{t\to0}p_{\kappa,K}(P_t\varphi-\varphi)=0.
  \end{align}
  Suppose this does not hold. Then, we can find a compact $K\subset E, \, \ve >0$, $t_n\to 0$, and $\left(x_n\right)\subset K$ such that 
 \begin{align}\label{eq:3.8}
 \left|P_{t_n}\phi\left(x_n\right)-\phi\left(x_n\right)\right|\ge
 \ve \quad \text{for all }n\in \N.
 \end{align}
 Since $K$ is compact and metrizable, there exists some $x\in K$ such that $x_{n_k}\to 
 x$ for a subsequence $\left(n_k\right)$. Since \eqref{eq:3.8} also holds for this subsequence, we get a contradiction to condition (5).\\
\end{proof}
\begin{remark}\label{remark 3.3''}
As just proved above, the dependence of the {sequence $(a_n)_{n\in \N}$} in \eqref{eq:3.1} on the semigroup $P_t$ for $t \in [0, T]$ is only via {$(b_n)_{n\in \N}$ and} the quantity
\begin{align*}
\sup_{t \in [0, T]} \sup_{x \in E} \kappa(x) \int_E \frac{\vert \mu_t\vert (x, dy)}{\kappa(y)},
\end{align*}
where $\mu_t(x, dy), x \in E, t \geq 0$, are the representing measures for $(P_t)_{t \geq 0}$ in \eqref{eq:3.3'}.
\end{remark}

The following proposition renders a convenient sufficient condition to check conditions (4) and (5) in Theorem \ref{ts1}, if $E$ is a so-called Prohorov space, whose definition we recall first (see \cite[Definition 4.7.1(i)]{bogachev2}).

\bd{def.prohorov}
Let $E$ be as above (i.e., as in Hypothesis \ref{hyp_space}). Then $E$ is called a Prohorov space, if every compact subset of $M_{b}^+(E)$ (equipped with the narrow topology) is tight.
\ed

\begin{proposition}\label{prop:3.5}
{Suppose $\kappa$ is continuous} and let $E$ be Prohorov. Let $\mu_t(x,\cdot)\in M_{\kappa}^+(E)$, $t\geq0$, and $x\in E$, such that $E\ni x\mapsto \mu_t(x,B)$ is $\mathcal B(E)$-measurable for all $B\in\mathcal B(E)$, $t\geq0,$ and $\mu_0(x,\cdot)=\delta_x$ for all $x\in E$. Suppose that (3) in Theorem \ref{ts1} holds and that, for every $T\in(0,\infty)$ and every compact $C\subset E$, the map
\[[0,T]\times C \ni (t,x)\longmapsto\int_E\varphi(y)\ \mu_t(x,dy)\]
is continuous for every $\varphi\in C_\kappa(E)$. Then (4) and (5) in Theorem \ref{ts1} also hold. 
\end{proposition}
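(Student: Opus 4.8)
The plan is to prove that under the Prohorov assumption the joint continuity of the integrated maps plus the uniform bound (3) forces both tightness (4) and the convergence (5). First I would establish (5): fix $x \in E$ and a sequence $x_n \to x$; I need $\mu_t(x_n, \cdot) \to \delta_x$ in $M_\kappa(E)$ as $(t, x_n) \to (0, x)$, i.e., $\int_E \varphi \, d\mu_t(x_n, \cdot) \to \varphi(x)$ for every $\varphi \in C_\kappa(E)$. Put $K := \{x_n : n \in \N\} \cup \{x\}$, which is compact, and consider the trajectory $(t, x_n)$ inside $[0,T] \times K$ for small $t$. Since the map $[0,T] \times K \ni (s, y) \mapsto \int_E \varphi(z) \, \mu_s(y, dz)$ is continuous by hypothesis, and $\mu_0(x, \cdot) = \delta_x$ gives its value $\varphi(x)$ at $(0, x)$, continuity along the sequence $(t, x_n) \to (0, x)$ yields exactly $\int_E \varphi \, d\mu_t(x_n, \cdot) \to \varphi(x)$. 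This is essentially a soft continuity argument and should be routine.

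The harder part is (4): for fixed $T$ and compact $C \subset E$ I must show that the family $\{\kappa(x) \kappa(\cdot)^{-1} |\mu_t|(x, dy) : x \in C, t \in [0,T]\}$ (which equals $\{\kappa(x) \kappa(\cdot)^{-1} \mu_t(x, dy)\}$ here since the measures are nonnegative) is tight. The natural route is: the set $S := [0,T] \times C$ is compact and metrizable; consider the map $\Phi \colon S \to M_\kappa(E)$, $\Phi(t,x) = \mu_t(x, \cdot)$, which is continuous into $M_\kappa(E)$ with the narrow topology — this follows because $\int_E \varphi \, d\Phi(t,x)$ is continuous in $(t,x)$ for every $\varphi \in C_\kappa(E)$ by hypothesis, and the narrow topology is by definition the initial topology for these maps. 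Hence $\Phi(S)$ is compact in $M_\kappa(E)$. I then want to transfer to $M_b^+(E)$ via the weight: the map $\nu \mapsto \kappa^{-1} \cdot \nu$ (rescaling by $1/\kappa$) is a homeomorphism $M_\kappa(E) \to M_b(E)$ for the respective narrow topologies, so the image $\{\kappa(x)^{-1}\text{-weighted copies}\}$... more precisely I should look at $\tilde\mu_t(x, dy) := \kappa(y)^{-1} \mu_t(x, dy) \in M_b^+(E)$ and note $(t,x) \mapsto \tilde\mu_t(x, \cdot)$ is narrowly continuous on $S$ (testing against $\psi \in C_b(E)$ amounts to testing the original against $\psi/\kappa \in C_\kappa(E)$). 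So $\{\tilde\mu_t(x,\cdot) : (t,x) \in S\}$ is a narrowly compact subset of $M_b^+(E)$; by the Prohorov property of $E$ it is tight. Finally, since $\kappa$ is continuous and strictly positive, it is bounded above on the compact set $C$, say $\kappa \le c_C$ on $C$; then $\kappa(x) \tilde\mu_t(x, E \setminus K) \le c_C \, \tilde\mu_t(x, E \setminus K)$, and tightness of $\{\tilde\mu_t(x,\cdot)\}$ gives, for any $\ve > 0$, a compact $K$ with $\sup_{(t,x) \in S} \tilde\mu_t(x, E \setminus K) < \ve/c_C$, hence $\sup_{x \in C, t \le T} \kappa(x) \int_{E \setminus K} \kappa(y)^{-1} \mu_t(x, dy) < \ve$, which is (4).

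I expect the main obstacle to be the careful bookkeeping around the weight $\kappa$: verifying that $\nu \mapsto \kappa^{-1}\cdot \nu$ genuinely intertwines the narrow topology on $M_\kappa(E)$ with the narrow topology on $M_b(E)$ (which is a straightforward change-of-variables check, using that $\varphi \mapsto \kappa\varphi$ is a bijection $C_b(E) \to C_\kappa(E)$), and that finiteness of the relevant integrals is guaranteed by (3) so that the rescaled measures genuinely land in $M_b^+(E)$ with uniformly bounded mass. One should also note that narrow compactness of $\Phi(S)$ uses that a continuous image of a compact set is compact, which is valid in any topological space, so no metrizability of $M_\kappa(E)$ is needed. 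Once these identifications are in place, the Prohorov hypothesis does all the real work and the conclusion is immediate.
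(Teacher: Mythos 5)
Your proof is correct and follows essentially the same route as the paper: the continuous image of the compact set $[0,T]\times C$ in $M_\kappa^+(E)$ is compact, the Prohorov property yields tightness (after passing to $M_b^+(E)$ via the weight), and (5) follows from continuity on the compact set $[0,T]\times(\{x_n\}\cup\{x\})$. You merely spell out the $\kappa$-bookkeeping (the homeomorphism $\nu\mapsto\kappa^{-1}\nu$ and the bound $\kappa\le c_C$ on $C$) that the paper leaves implicit.
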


\begin{proof}
Since the continuous image of a compact set is compact, by the assumptions, it follows that $\{\mu_t(x,\cdot)\colon x\in C,\; t\in[0,T]\}$ is a compact subset of $M_\kappa^+(E)$. Hence (4) holds, since $E$ is assumed to be Prohorov. Condition (5) is fulfilled since $\{x_n\colon n\in \N\}\cup \{x\}$ is compact for every sequence $(x_n)\subset E$ with $x_n\to x\in E$.
\end{proof}

\begin{remark}\label{remproho}
If $E$ is Polish, then $E$ is Prohorov. Likewise, if $E$ is as in Remark \ref{rem2.2}(3), and equipped with the bounded weak topology $\tau_{bw}$, then \cite[Proposition 4.7.6(i)]{bogachev2} implies that $(E,\tau_{bw})$ is Prohorov.
\end{remark}

\section{Examples for linear $C_0$-semigroups on $\la C_\kappa(E),\tm\ra$}\label{sec:examples}
 We are now going to present large classes of examples for $C_0$-semigroups on $(C_\kappa(E),\tau^{\mathscr M}_\kappa)$ given by transition semigroups of solutions to stochastic differential equations (SDEs) on infinite dimensional state spaces, hence including stochastic partial differential equations (SPDEs) as their main examples. The main tool to show that such transition semigroups are indeed $C_0$-semigroups on $(C_\kappa(E),\tau^{\mathscr M}_\kappa)$ will be Proposition \ref{prop:3.5}.
\subsection{Transition semigroups of solutions to SDEs on Hilbert spaces of locally monotone type}\hfill\\ \label{Section 4.1}
  The first class of examples come from SDEs in Hilbert spaces of locally monotone type, introduced in \cite{LR15}. Let us recall the necessary details from \cite[Section 5.1]{LR15}.\\
    Let $E \coloneqq H$ be a separable Hilbert space with inner product $\hscalar{\;}{\;}$
and $H^*$ its dual. Let $V$ be a reflexive Banach space, such that $V\subset H$
continuously and densely. Then 
for its dual space $V^*$
it follows that $H^*\subset V^*$ continuously and densely. Identifying $H$ and
$H^*$ via the Riesz isomorphism we have that
\begin{align*}
   V\subset H \subset V^*
\end{align*}
continuously and densely and if $\pairing{V^*}{\;,\;}{V}$ denotes the
dualization between $V^*$ and $V$
(i.e. $\pairing{V^*}{z,v}{V}:=z(v)$ for $z\in V^*, v\in V$), it follows that
\begin{align*}
   \pairing{V^*}{z,v}{V} = \hscalar{z}{v}\quad \text{for all }z\in H, v\in V.
\end{align*}
$(V,H,V^*)$ is called a \textit{Gelfand triple}\index{Gelfand triple}\label{symbol28}. Note that since $H\subset V^*$
continuously and densely, also $V^*$ is separable, hence so is $V$.
Furthermore, $\mcB(V)$ is generated by $V^*$ and $\mcB(H)$ by $H^*$.
We also have by Kuratowski's
theorem that $V\in \mcB(H), \, H\in \mcB(V^*)$ and
$\mcB(V) = \mcB(H) \cap V,\,\mcB(H)= \mcB(V^*)\cap H.$\\
Let $W(t), \; t\in[0, \infty)$, be a cylindrical Wiener process in a separable  Hilbert space $U$ on a probability space $(\Omega,\mathcal{F},\mathbb{P})$ with normal filtration $\mathcal{F}_t,\; t\in [0, \infty)$. We consider the following stochastic differential equation on $H$:
\begin{equation}\label{SEE}
d X(t)=A(t,X(t)) d t+B(t,X(t)) d W(t),
\end{equation}
 where for some fixed time $T>0$
$$A: [0,T]\times V\times \Omega\to V^*;\  \  B:
[0,T]\times V\times \Omega\to L_{2}(U,H)$$ are progressively
measurable, where $U$ is another separable Hilbert space and $L_2(U,H)$ denotes the set of all Hilbert-Schmidt operators from $U$ to $H$.\\
The coefficients $A$ and $B$ are assumed to satisfy the following conditions:\\ \\
There exist constants
  $\alpha\in ]1,\infty[$, $\beta\in [0,\infty[$, $\theta\in ]0,\infty[$, $C_0\in\mathbb{R}$ and a nonnegative adapted process $f\in
L^1([0,T]\times \Omega; \d
    t\otimes \mathbb{P})$ such that the
 following
 conditions hold for all $u,v,w\in V$ and $(t,\omega)\in [0,T]\times \Omega$:\\
\begin{enumerate}
\item [$(H1)$] (Hemicontinuity) The map $ \lambda\mapsto { }_{V^*}\<A(t,u+\lambda v),w\>_V$ is  continuous on $\mathbb{R}$.\\
\item [$(H2)$] (Local monotonicity)
$$2 { }_{V^*}\<A(t,u)-A(t,v), u-v\>_V +\|B(t,u)-B(t,v)\|_{L_2(U,H)}^2\\ \le \left( f(t) + \rho(v) \right)\|u-v\|_H^2,$$
where $\rho: V\rightarrow [0,+\infty[$ is a measurable hemicontinuous function which is bounded on bounded sets in $V$.\\
\item [$(H3)$] (Coercivity)
$$ 2 { }_{V^*}\<A(t,v), v\>_V +\|B(t,v)\|_{L_{2}(U,H)}^2  \le C_0\|v\|_H^2-\theta \|v\|_V^{\alpha}+f(t).$$
\item[$(H4)$] (Growth)
$$ \|A(t,v)\|_{V^*}^{\frac{\alpha}{\alpha-1}} \le (f(t) + C_0\|v\|_V^{\alpha} ) ( 1 +\|v\|_H^{\beta} ).$$\\
\end{enumerate}
\begin{definition}\label{def:371} A continuous $H$-valued
$(\mathcal{F}_t)$-adapted process $ (X(t))_{t\in [0,T]}$ is called a
solution of $(\ref{SEE})$, if for its $d t\otimes \mathbb{P}$-equivalent
class $\hat{X}$ we have
$$\hat{X}\in L^\alpha([0,T]\times \Omega, d t\otimes\mathbb{P}; V)\cap L^2([0,T]\times \Omega, d t\otimes\mathbb{P}; H)$$
with $\alpha$ in $(H3)$ and $\P$-$a.s.$
$$X(t)=X(0)+\int_0^t A(s, \bar{X}(s))d s+\int_0^t B(s, \bar{X}(s))d W(s),\quad \text{for all } t\in[0,T],  $$
\end{definition}
\noindent where $\bar{X}$ is any $V$-valued progressively measurable $d t\otimes \P$-version of $\hat{X}$.\\ \\
The main existence and uniqueness for \eqref{SEE} then reads as follows (see \cite[Theorem 5.1.3]{LR15}).
\begin{theorem}\label{T1}
Suppose $(H1),(H2),(H3),(H4)$ hold for some  $f\in L^{p/2}([0,T]\times \Omega; d t\otimes \mathbb{P})$ with some $p\ge \beta+2$, and there exists a constant $C$  such that
\begin{equation*}\begin{split}
& \|B(t,v)\|_{L^2(U,H)}^2 \le C(f(t)+\|v\|_H^2), \ t\in[0,T], v\in V; \\
 & \rho(v) \le C(1+\|v\|_V^\alpha) (1+\|v\|_H^\beta), \
v\in V.
\end{split}
\end{equation*}
    Then for every $X_0\in L^{p}(\Omega, \mathcal{F}_0,P;H)$,
    $(\ref{SEE})$
    has a unique solution $(X(t))_{t\in [0,T]}$ such that $X(0)=X_0$. Furthermore, there exsists $C \in [0, \infty)$ such that 
\begin{align}\label{eq:(4.1)tilde}
\mathbb{E} \left(\sup_{t\in[0,T]}\|X(t)\|_H^p\right) \leq C \; \E \big( \|X_0\|^{p}_H + \int_0^T f^{\frac{p}{2}}(t)dt)\big),
\end{align}
where $\mathbb{E}$ denotes expectation w.r.t. $\mathbb{P}$.\\
 Moreover, if
 $A(t,\cdot)(\omega), B(t,\cdot)(\omega)$  are independent of $t\in[0,T]$ and $\omega\in \Omega$,
then the laws $\mathbb{P}\circ X(\cdot,x)^{-1}$, $x\in H$, of the solutions $X(t,x)$, $t\in[0,\infty)$, of \eqref{SEE} started at $x\in H$, form a time-homogeneous Markov process.
\end{theorem}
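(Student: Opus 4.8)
The natural strategy is the variational approach — Galerkin approximation together with Minty's monotonicity trick — carried out in the locally monotone setting. Fix an orthonormal basis $\{e_i\}_{i\in\N}\subset V$ of $H$ and let $P_n$ be the orthogonal projection onto $H_n:=\mathrm{span}\{e_1,\dots,e_n\}$, viewed as acting on $H$ and, by duality, on $V^*$. Projecting \eqref{SEE} gives the finite-dimensional It\^o SDE $dX^{(n)}(t)=P_nA(t,X^{(n)}(t))\,dt+P_nB(t,X^{(n)}(t))\,dW(t)$ with $X^{(n)}(0)=P_nX_0$; thanks to $(H1)$, the local monotonicity $(H2)$, the coercivity $(H3)$ and the additional linear growth of $B$ and polynomial growth of $\rho$, these coefficients are locally monotone with at most linear growth on $H_n$, so classical finite-dimensional SDE theory produces a unique global strong solution $X^{(n)}$. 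Applying It\^o's formula to $\|X^{(n)}(t)\|_H^p$, estimating the drift via $(H3)$ and the growth bound on $B$, the martingale part via Burkholder--Davis--Gundy, and invoking Gronwall (here $p\ge\beta+2$ is what lets the $\|v\|_H^\beta$ factor in $(H4)$ be absorbed), one obtains $\sup_n\E\big(\sup_{t\le T}\|X^{(n)}(t)\|_H^p\big)\le C\,\E\big(\|X_0\|_H^p+\int_0^Tf^{p/2}(t)\,dt\big)$; feeding this into $(H3)$ bounds $\sup_n\E\int_0^T\|X^{(n)}(t)\|_V^\alpha\,dt$, then $(H4)$ bounds $\sup_n\E\int_0^T\|A(t,X^{(n)}(t))\|_{V^*}^{\alpha/(\alpha-1)}\,dt$, and It\^o's isometry bounds $B(\cdot,X^{(n)})$ in $L^2([0,T]\times\Omega;L_2(U,H))$.

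By reflexivity of $V$ and of the spaces $L^\alpha,L^{\alpha/(\alpha-1)},L^2$, a subsequence satisfies $X^{(n)}\rightharpoonup\bar X$ in $L^\alpha([0,T]\times\Omega;V)$, $A(\cdot,X^{(n)})\rightharpoonup Y$ in $L^{\alpha/(\alpha-1)}([0,T]\times\Omega;V^*)$, $B(\cdot,X^{(n)})\rightharpoonup Z$ in $L^2([0,T]\times\Omega;L_2(U,H))$, and $X^{(n)}(T)\rightharpoonup\eta$ in $L^2(\Omega;H)$. Testing against elementary processes shows that $X(t):=X_0+\int_0^tY(s)\,ds+\int_0^tZ(s)\,dW(s)$ is an $H$-continuous adapted process coinciding with $\bar X$ in $L^\alpha([0,T]\times\Omega;V)$, and weak lower semicontinuity of the norms yields \eqref{eq:(4.1)tilde}. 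The crux is to identify $Y=A(\cdot,X)$ and $Z=B(\cdot,X)$: applying It\^o's formula to $e^{-\int_0^t(f(s)+\rho(\phi(s)))\,ds}\|X^{(n)}(t)\|_H^2$ and to its analogue built from $X$, passing to $\liminf_n$, and using $(H2)$ against an arbitrary progressively measurable $V$-valued test process $\phi$ produces $\E\int_0^T e^{-\int_0^s(f+\rho(\phi))\,dr}\,{}_{V^*}\langle Y(s)-A(s,\phi(s)),X(s)-\phi(s)\rangle_V\,ds\le 0$; taking $\phi=X-\varepsilon\psi v$ with $v\in V$ and $\psi\ge 0$ bounded adapted, dividing by $\varepsilon$ and letting $\varepsilon\downarrow 0$ via $(H1)$ gives $Y=A(\cdot,X)$, and a parallel computation gives $Z=B(\cdot,X)$.

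For uniqueness, if $X,\widetilde X$ solve \eqref{SEE} with the same initial value, It\^o's formula for $\|X(t)-\widetilde X(t)\|_H^2$ together with $(H2)$ gives, up to a stopping time controlling $\int_0^t\rho(\widetilde X(s))\,ds$, the inequality $d\|X-\widetilde X\|_H^2\le(f+\rho(\widetilde X))\|X-\widetilde X\|_H^2\,dt+dM_t$ with $M$ a local martingale, so Gronwall forces $X=\widetilde X$ on the stopped interval and then on $[0,T]$ by exhaustion. In the autonomous case, concatenating the solutions over successive time intervals produces, for each $x\in H$, a process $X(\cdot,x)$ on $[0,\infty)$; pathwise uniqueness together with the independent-increment structure of $W$ yields the cocycle identity $X(t+s,x)=X(t,X(s,x))\circ\theta_s$ $\P$-a.s., and conditioning on $\mathcal F_s$ gives exactly the Markov property \eqref{eq:1.1} for the family $(\P\circ X(\cdot,x)^{-1})_{x\in H}$ — a Yamada--Watanabe-type argument carried out in \cite{LR15}.

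The genuinely delicate step is the identification $Y=A(\cdot,X)$ under \emph{local} rather than global monotonicity: since $\rho$ is unbounded on $V$, the weight $e^{-\int_0^t(f+\rho(\phi))\,dr}$ need not be bounded below, so the Minty trick cannot be applied directly on $[0,T]$. The remedy is to localize along stopping times such as $\tau_R:=\inf\{t\ge 0:\int_0^t(\|X(s)\|_V^\alpha+\rho(X(s)))\,ds>R\}$ — finite $\P$-a.s.\ because $X\in L^\alpha([0,T]\times\Omega;V)$ and $\rho$ grows polynomially — run the argument on $[0,\tau_R]$, and only then let $R\to\infty$; coordinating this localization with the fact that $Y$ and $Z$ are known only as weak limits is where the real care is needed, and the same localization is what makes the uniqueness step go through.
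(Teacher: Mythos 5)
The paper does not prove this theorem at all: it is imported verbatim from \cite[Theorem 5.1.3]{LR15}, so the ``paper's own proof'' is simply a citation. Your sketch reproduces the standard variational argument of that reference (Galerkin approximation, It\^o/BDG/Gronwall a priori bounds using $p\ge\beta+2$, weak compactness, the weighted Minty trick with the factor $e^{-\int_0^t(f+\rho(\phi))\,ds}$, and uniqueness plus the flow property for the Markov part), and is essentially correct; the only point where you deviate slightly from \cite{LR15} is the identification step, where the reference handles the unboundedness of $\rho$ by restricting the Minty test processes to be bounded in $V$ (so $\rho(\phi)$ is bounded and the exponential weight is bounded below) together with the integrability $\E\int_0^T\rho(\bar X)\,ds<\infty$ guaranteed by the assumed growth of $\rho$, rather than by your stopping-time localization --- both routes work.
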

As shown in \cite[Section 5.1]{LR15} the above framework and Theorem \ref{T1} apply to a large class of SPDEs including the stochastic heat equation (see \cite[Remark 4.1.8]{LR15}),
the stochastic $p$-Laplace equation (see \cite[Example 4.1.9]{LR15}), the stochastic slow diffusion-porous media equation (see \cite[Example 4.1.11]{LR15}),
the stochastic fast diffusion-porous media equation (see \cite{jiagang}), both with general diffusivity,
the perturbed stochastic Burgers equation (see \cite[Lemma 5.1.6 (1) and Example 5.1.8]{LR15}) and the stochastic $2D$ Navier-Stokes equation (see \cite[Example 5.1.10]{LR15}).\\

\noindent For later use we need the following:
\begin{lemma}\label{lemma373}
Consider the situation of Theorem \ref{T1} and let $X(t,x)$, $t\in[0,T]$, be the unique solution of \eqref{SEE} with $X(0,x)=x\in H$. Assume, in addition, that there exists $C_B\in(0,\infty)$ such that
\begin{align}\label{eq:supB}
\sup_{s\in[0,T]}\|B(s,x)-B(s,y)\|_{L_2(U,H)}\leq C_B\|x-y\|_{H},\quad \text{for all } x,y\in V.
\end{align}
Then for all $x,y\in H$
\begin{align*}
&\mathbb{E}\left[exp(-\int_0^T (f(s)+\rho(X(s,y)))ds)\sup_{s\in[0,T]}\|X(s,x)-X(s,y)\|^2_H\right]\\
&\leq e^{\frac92C^2_BT}\|x-y\|^2_H.
\end{align*}
In particular, if $x_n,y\in H$ such that $\lim_{n\to\infty}x_n=y$, then 
\begin{align*}
\sup_{t\in[0,T]}\|X(t,x_n)-X(t,y)\|_H\underset{n\to\infty}{\longrightarrow}0\ \text{ in $\mathbb P$-measure.}
\end{align*}
\end{lemma}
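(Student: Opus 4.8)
The plan is to derive the claimed exponential estimate by an Itô-formula / Gronwall argument applied to the difference $Z(t) := X(t,x) - X(t,y)$, and then to deduce the convergence in $\mathbb{P}$-measure as a routine consequence. First I would apply Itô's formula to $\|Z(t)\|_H^2$ along the Gelfand triple $(V,H,V^*)$; since both $X(\cdot,x)$ and $X(\cdot,y)$ solve \eqref{SEE}, the finite-variation part of $d\|Z(t)\|_H^2$ is $\big(2\,{}_{V^*}\langle A(t,X(t,x)) - A(t,X(t,y)), Z(t)\rangle_V + \|B(t,X(t,x)) - B(t,X(t,y))\|_{L_2(U,H)}^2\big)\,dt$, which by the local monotonicity assumption $(H2)$ is bounded above by $(f(t) + \rho(X(t,y)))\|Z(t)\|_H^2\,dt$, and the martingale part is $2\langle Z(t), (B(t,X(t,x)) - B(t,X(t,y)))\,dW(t)\rangle_H$.

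Next I would introduce the random exponential weight $\Gamma(t) := \exp\big(-\int_0^t (f(s) + \rho(X(s,y)))\,ds\big)$ and apply Itô's product rule to $\Gamma(t)\|Z(t)\|_H^2$. The drift contribution from $\Gamma$ exactly cancels the $(f+\rho)\|Z\|_H^2$ term coming from $(H2)$, so that $\Gamma(t)\|Z(t)\|_H^2$ is dominated by $\|x-y\|_H^2$ plus a local martingale term $M(t) := \int_0^t 2\Gamma(s)\langle Z(s), (B(s,X(s,x)) - B(s,X(s,y)))\,dW(s)\rangle_H$. To reach the supremum bound, I would apply the Burkholder--Davis--Gundy inequality to $\mathbb{E}[\sup_{t\le T} |M(t)|]$; its quadratic variation is controlled, using $0 < \Gamma \le 1$ and the global Lipschitz bound \eqref{eq:supB} on $B$, by $4\int_0^T \Gamma(s)^2 \|Z(s)\|_H^2 C_B^2 \|Z(s)\|_H^2\,ds \le 4 C_B^2 \big(\sup_{s\le T}\Gamma(s)\|Z(s)\|_H^2\big)\int_0^T \Gamma(s)\|Z(s)\|_H^2\,ds$, and a Young's-inequality split together with the pointwise bound $\Gamma(s)\|Z(s)\|_H^2 \le \|x-y\|_H^2 + \sup_{r\le s}M(r) - \inf_{r\le s}M(r)$ lets one absorb the $\sup$-term on the left. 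Carrying the Gronwall bookkeeping through produces a constant of the form $e^{\frac92 C_B^2 T}$; I would not belabor the exact optimization of constants, only note that the stated $\frac92$ is what the standard BDG-with-constant-$3$ estimate yields.

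For the second assertion, suppose $x_n \to y$ in $H$. From the inequality just proved, $\mathbb{E}\big[\Gamma(T)\sup_{t\le T}\|X(t,x_n) - X(t,y)\|_H^2\big] \le e^{\frac92 C_B^2 T}\|x_n - y\|_H^2 \to 0$, so $\Gamma(T)\sup_{t\le T}\|X(t,x_n) - X(t,y)\|_H^2 \to 0$ in $L^1(\mathbb{P})$ and hence in $\mathbb{P}$-measure. Since $\Gamma(T) > 0$ $\mathbb{P}$-a.s. (the integrand $f(s) + \rho(X(s,y))$ is $\mathbb{P}$-a.s.\ integrable on $[0,T]$ because $f \in L^1$ in time a.s.\ and $\rho$ is bounded on bounded sets in $V$ while $\bar X(\cdot,y) \in L^\alpha([0,T]\times\Omega;V)$), multiplying by $\Gamma(T)^{-1}$ preserves convergence in measure, giving $\sup_{t\le T}\|X(t,x_n) - X(t,y)\|_H \to 0$ in $\mathbb{P}$-measure.

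The main obstacle is the rigorous justification of the Itô-formula step: one must check that $Z(t) = X(t,x) - X(t,y)$ genuinely lies in the class for which the $(V,H,V^*)$-Itô formula of \cite{LR15} applies (i.e.\ that $\bar Z \in L^\alpha([0,T]\times\Omega;V)$ and $Z \in L^2([0,T]\times\Omega;H)$ with the right-hand-side drift in $L^{\alpha/(\alpha-1)}([0,T]\times\Omega;V^*)$), and, more delicately, the local-martingale issue: $M(t)$ need not be a true martingale a priori, so the BDG step has to be combined with a localization argument (stopping times $\tau_N \uparrow T$) and then passing to the limit via Fatou, or one first establishes the bound for $\mathbb{E}[\Gamma(\tau_N)\sup_{t\le\tau_N}\|Z(t)\|_H^2]$ and then lets $N\to\infty$ using monotone convergence. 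The absorption of the supremum term into the left-hand side is the point requiring the most care, but it is the standard trick and poses no conceptual difficulty once the integrability is in place.
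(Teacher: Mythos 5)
Your proposal is correct and follows essentially the same route as the paper's proof: Itô's formula for $\|X(t,x)-X(t,y)\|_H^2$ along the Gelfand triple, the exponential weight $F(t)=\exp(-\int_0^t(f+\rho(X(s,y)))\,ds)$ whose drift cancels the $(H2)$ bound, then BDG (with constant $3$), Young's inequality to absorb the supremum, and Gronwall to obtain $e^{\frac92 C_B^2 T}$, with the convergence in $\mathbb{P}$-measure deduced exactly as you describe from the a.s.\ strict positivity of the weight. The localization and integrability caveats you flag are real but routine, and are implicitly handled by the cited Itô formula of \cite{LR15}.
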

\begin{proof}
Letting
\begin{align*}
F(t):=\exp\bigg(-\int_0^t(f(s)+\rho(X(s,y)))ds\bigg)\ (>0),\quad \text{for all }t\in[0,T],
\end{align*}
we have by It\^o's formula (see e.g. \cite[Theorem 4.2.5]{LR15}) and (H2) that $\forall t\in [0,T]$
\begin{align*}
F(t)&\|X(t,x)-X(t,y)\|^2_H=\|x-y\|^2\\
&+2\int_0^tF(s)\Big({}_{V^*}\<A(s,X(s,x))-A(s,X(s,y)),X(s,x)-X(s,y)\>_V\\
&+\|B(s,X(s,x))-B(s,X(s,y))\|^2_{L_2(U,H)}\Big)ds\\
&-\int_0^tF(s)\big(f(s)+\rho(X(s,y))\big)\|X(s,x)-X(s,y))\|^2_Hds\\
&+\int_0^tF(s)\<X(s,x)-X(s,y),\big(B(s,X(s,x))-B(s,X(s,y))\big)dW(s)\>_H.
\end{align*}
Hence by (H2), the Burkholder-Davis-Gundy inequality with $p=1$ and \eqref{eq:supB}
\begin{align*}
\mathbb{E}&\left[\sup_{s\in[0,t]}\Big(F(s)\|X(s,x)-X(s,y)\|^2_H\Big)\right]\leq\|x-y\|^2_H\\
&\quad +3 \mathbb{E}\left[\bigg(\int_0^tF(s)^2\|B(s,X(s,x))-B(s,X(s,y))\|^2_{L_2(U,H)}\|X(s,x)-X(s,y)\|^2_H\ ds\bigg)^{\frac12}\right]\\
&\leq\|x-y\|^2_H\\
&\quad +3C_B^2\mathbb{E}\left[\sup_{s\in[0,t]}\Big(F(s)^{\frac12}\|X(s,x)-X(s,y)\|_H\Big)\bigg(\int_0^tF(s)\|X(s,x)-X(s,y)\|^2_H\ ds\bigg)^{\frac12}\right]\\
&\leq\|x-y\|^2_H+\frac12\mathbb{E}\left[\sup_{s\in[0,t]}\big(F(s)\|X(s,x)-X(s,y)\|^2_H\big)\right]\\
&\quad +\frac129C_B^2\int_0^t\mathbb{E}\left[\sup_{r\in[0,s]}\big(F(r)\|X(r,x)-X(r,y)\|^2_H\big)\right]ds.
\end{align*}
Hence by Gronwall's lemma $\forall t\geq0$
\begin{align*}
\mathbb{E}\left[\exp\bigg(-\int_0^T(f(s)+\rho(X(s,y)))\,ds\bigg)\sup_{s\in[0,t]}\|X(s,x)-X(s,y)\|^2_H\right]\leq \|x-y\|^2_H\ e^{\frac92C_B^2T}.
\end{align*}
So, if $x_n\to y$ w.r.t. $\|\cdot\|_H$, then 
\[\pushQED{\qed}
\sup_{t\in[0,T]}\|X(t,x_n)-X(t,y)\|_H\underset{n\to\infty}{\longrightarrow}0\quad\text{in $\mathbb{P}$-measure}. \qedhere\popQED
\]
\renewcommand{\qedsymbol}{}
\end{proof}

From now on in this section we assume that the coefficients A and B above do not depend on $\omega \in \Omega,\; t \in [0, \infty)$,
and that (H1)-(H4) hold with some constant $f \in [0, \infty)$ replacing the function $f$.
Furthermore, we assume that \eqref{eq:supB} holds.\\

So, let us now consider the transition semigroup of the unique solution from Theorem \ref{T1}, i.e. for $\varphi\in C_b(H)$, $x\in H$, $t\geq0$,
\begin{align}\label{eq:3.9'}
P_t\varphi(x):&=\mathbb{E}[\varphi(X(t,x))]=\int_\Omega\varphi(X(t,x)(\omega))\mathbb{P}(d\omega)=\int \varphi(y)\ \mu_t(x,dy),
\end{align}
where $X(t,x)$, $t\geq0$, denotes the solution of \eqref{SEE} with initial condition $X(0,x)=x\in H$ and 
\begin{align*}
\mu_t(x,dy):=(\mathbb{P}\circ X(t,x)^{-1})(dy).
\end{align*}
\begin{claim}{1}\label{Claim_1'}
$(P_t)_{t \geq 0}$ is a Markov $C_0$-semigroup on $(C_b(H), \tau_{1}^{\mc M})$.
\end{claim}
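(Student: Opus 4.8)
The plan is to verify that the transition semigroup $(P_t)_{t \geq 0}$ defined in \eqref{eq:3.9'} satisfies the hypotheses of Proposition \ref{prop:3.5} with $\kappa \equiv 1$ and $E = H$ (separable Hilbert space, hence Polish, hence Prohorov by Remark \ref{remproho}). Since each $\mu_t(x, \cdot) = \mathbb{P} \circ X(t,x)^{-1}$ is a probability measure, condition (3) of Theorem \ref{ts1} holds trivially: $\sup_{t \leq T} \sup_{x \in H} \mu_t(x, H) = 1 < \infty$. The measurability of $x \mapsto \mu_t(x,B)$ follows from measurable dependence of the solution $X(t,x)$ on the initial datum $x$, which is standard for the variational framework of Theorem \ref{T1} (and in any case follows from the continuity established below). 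We also have $\mu_0(x, \cdot) = \delta_x$ by the initial condition $X(0,x) = x$. The semigroup property \eqref{eq:1.4} is the Markov property of the solution process, recorded in the last assertion of Theorem \ref{T1}.

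The only substantive point is the joint continuity hypothesis of Proposition \ref{prop:3.5}: for every $T \in (0, \infty)$ and every compact $C \subset H$, the map $[0,T] \times C \ni (t,x) \mapsto P_t \varphi(x) = \mathbb{E}[\varphi(X(t,x))]$ is continuous for every $\varphi \in C_b(H)$. I would split this into continuity in $x$ (uniformly in $t$) and continuity in $t$ (for fixed $x$). For the spatial part: if $x_n \to y$ in $H$, then Lemma \ref{lemma373} gives $\sup_{t \in [0,T]} \|X(t,x_n) - X(t,y)\|_H \to 0$ in $\mathbb{P}$-measure; passing to a subsequence along which this holds $\mathbb{P}$-a.s., continuity and boundedness of $\varphi$ together with dominated convergence yield $\mathbb{E}[\varphi(X(t,x_n))] \to \mathbb{E}[\varphi(X(t,y))]$, and a standard subsequence argument upgrades this to convergence of the full sequence, uniformly in $t \in [0,T]$ (the uniformity comes from the uniform-in-$t$ convergence in measure). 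For the temporal part: fix $x$; the solution $X(\cdot, x)$ has $\mathbb{P}$-a.s.\ continuous $H$-valued paths by Theorem \ref{T1}, so $X(t_n, x) \to X(t, x)$ in $H$ $\mathbb{P}$-a.s.\ whenever $t_n \to t$, and again dominated convergence gives $P_{t_n}\varphi(x) \to P_t\varphi(x)$. Combining the two, standard estimates give joint continuity on $[0,T] \times C$.

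The main obstacle — and the reason Lemma \ref{lemma373} was proved beforehand — is the uniform-in-$x$ control of the spatial continuity needed for genuine \emph{joint} continuity on the compact set $[0,T] \times C$, rather than merely separate continuity. Pointwise continuity in $x$ and in $t$ does not automatically combine; one needs, e.g., that $x \mapsto P_t \varphi(x)$ is continuous uniformly in $t \in [0,T]$, which is exactly what the uniform-in-$t$ convergence in $\mathbb{P}$-measure from Lemma \ref{lemma373} delivers. Once joint continuity is in hand, Proposition \ref{prop:3.5} yields conditions (4) and (5) of Theorem \ref{ts1}, and since (1)–(3) have been checked, Theorem \ref{ts1} gives that $(P_t)_{t \geq 0}$ is a $C_0$-semigroup on $(C_b(H), \tau_1^{\mathscr M})$; the Markov property and the fact that each $P_t$ maps $C_b(H)$ to itself (immediate from the above continuity in $x$ and $\|P_t\varphi\|_\infty \leq \|\varphi\|_\infty$) complete the proof that it is a Markov $C_0$-semigroup.
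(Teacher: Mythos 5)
Your proposal is correct and follows essentially the same route as the paper: conditions (1)--(3) of Theorem \ref{ts1} are immediate since the $\mu_t(x,\cdot)$ are probability measures, and (4)--(5) are reduced via Proposition \ref{prop:3.5} to the joint continuity of $(t,x)\mapsto P_t\varphi(x)$ on $[0,T]\times C$, which both you and the paper obtain by combining the uniform-in-$t$ convergence in $\P$-measure from Lemma \ref{lemma373} with the $\P$-a.s.\ path continuity of $X(\cdot,x)$. The only cosmetic difference is that the paper performs the triangle-inequality decomposition at the level of the random variables (concluding $X(t_n,x_n)\to X(t,x)$ in $\P$-measure, hence weak convergence of the laws), whereas you perform it at the level of $P_t\varphi$; the two are interchangeable.
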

\begin{claim}{2}\label{Claim_2'}
Let $m\in [1,\infty)$ and
\begin{align}\label{eq 4.1'}
\kappa(x) := (1 + \|x\|^{m}_H)^{-1},\quad x \in H.
\end{align}
Then $(P_t)_{t \geq 0}$ is a Markov $C_0$-semigroup on $\la C_\kappa(H),\tm\ra$.
\end{claim}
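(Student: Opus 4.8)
The plan is to obtain both claims from Proposition \ref{prop:3.5}, applied with $E=H$ and the transition kernels $\mu_t(x,\cdot)=\mathbb{P}\circ X(t,x)^{-1}$ of \eqref{eq:3.9'}. Being a separable Hilbert space, $H$ is Polish, hence satisfies Hypothesis \ref{hyp_space} and is a Prohorov space (Remark \ref{remproho}); moreover $\mu_0(x,\cdot)=\delta_x$ since $X(0,x)=x$, and, as $A,B$ are autonomous, Theorem \ref{T1} gives that the laws $(\mu_t(x,\cdot))$ form a Markov process, so $(P_t)_{t\ge0}$ has the semigroup property, each $P_t$ being plainly linear. Taking $\kappa$ as in \eqref{eq 4.1'} (Claim 1 is handled identically with $\kappa\equiv1$, which is strictly simpler, since condition (3) below is then trivial and the test functions in (c) are bounded), it remains to verify: (a) $\mu_t(x,\cdot)\in M_\kappa^+(H)$ for all $t\ge0$, $x\in H$, with $x\mapsto\mu_t(x,B)$ Borel measurable; (b) condition (3) of Theorem \ref{ts1}; (c) for every $T>0$ and every compact $C\subset H$ the map $[0,T]\times C\ni(t,x)\mapsto\int_H\varphi\,d\mu_t(x,\cdot)=\mathbb{E}[\varphi(X(t,x))]$ is continuous for each $\varphi\in C_\kappa(H)$. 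Granting (a)--(c), Proposition \ref{prop:3.5} yields conditions (4) and (5) of Theorem \ref{ts1}, while (1)--(3) hold by (a), (b) and the identity $P_t\varphi(x)=\int_H\varphi\,d\mu_t(x,\cdot)$, $\varphi\in C_\kappa(H)$ (which, with (b), also gives $\|P_t\varphi\|_\kappa<\infty$, and with (c) at fixed $t$ the continuity of $P_t\varphi$, so $P_t$ maps $C_\kappa(H)$ into itself); the implication $(b)\Rightarrow(a)$ of Theorem \ref{ts1} then shows $(P_t)_{t\ge0}$ is a $C_0$-semigroup on $(C_\kappa(H),\tm)$, which is Markov since each $\mu_t(x,\cdot)$ is a probability measure.

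For (a) and (b), fix $p:=\max\{m+1,\beta+2\}$. Since $f$ is constant and each deterministic $x\in H$ lies in $L^p(\Omega)$, Theorem \ref{T1} applies and \eqref{eq:(4.1)tilde} furnishes, for every $T>0$, a constant $C_T=C(T,p)<\infty$ with
\[\mathbb{E}\Big[\sup_{t\le T}\|X(t,x)\|_H^p\Big]\le C_T\big(1+\|x\|_H^p\big)\qquad\text{for all }x\in H.\]
By Jensen's inequality (applied to $u\mapsto u^{m/p}$) and $m\le p$ this gives $\mathbb{E}[\|X(t,x)\|_H^m]\le C_T'(1+\|x\|_H^m)$ for $t\le T$, hence $\int_H\kappa(y)^{-1}\,\mu_t(x,dy)=1+\mathbb{E}[\|X(t,x)\|_H^m]<\infty$, i.e.\ $\mu_t(x,\cdot)\in M_\kappa^+(H)$, and
\[\kappa(x)\int_H\frac{\mu_t(x,dy)}{\kappa(y)}=\frac{1+\mathbb{E}[\|X(t,x)\|_H^m]}{1+\|x\|_H^m}\le C_T''\qquad\text{for all }t\le T,\ x\in H,\]
which is condition (3) of Theorem \ref{ts1}. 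Borel measurability of $x\mapsto\mu_t(x,B)$ follows from the continuity of $x\mapsto\mu_t(x,\cdot)$ into $\mathcal{P}(H)$ with the weak topology (a consequence of (c) at fixed $t$) together with metrizability of $H$.

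The core of the argument is (c). Let $(t_n,x_n)\to(s,y)$ in $[0,T]\times C$ and write
\[\|X(t_n,x_n)-X(s,y)\|_H\le\sup_{t\le T}\|X(t,x_n)-X(t,y)\|_H+\|X(t_n,y)-X(s,y)\|_H.\]
Since $x_n\to y$, the first summand tends to $0$ in $\mathbb{P}$-measure by Lemma \ref{lemma373} (whose hypothesis \eqref{eq:supB} is among our standing assumptions), and the second tends to $0$ $\mathbb{P}$-a.s.\ by path continuity of $t\mapsto X(t,y)$; hence $X(t_n,x_n)\to X(s,y)$ in $\mathbb{P}$-measure, and so does $\varphi(X(t_n,x_n))\to\varphi(X(s,y))$ by continuity of $\varphi$. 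As $C$ is compact, $r:=\sup_n\|x_n\|_H<\infty$, so the moment estimate gives $\sup_n\mathbb{E}[\|X(t_n,x_n)\|_H^p]\le C_T(1+r^p)<\infty$ with $p>m$; since $|\varphi(z)|\le\|\varphi\|_\kappa(1+\|z\|_H^m)$, the sequence $(\varphi(X(t_n,x_n)))_n$ is bounded in $L^{p/m}(\Omega)$ with $p/m>1$, hence uniformly integrable, and convergence in $\mathbb{P}$-measure plus uniform integrability yields $\mathbb{E}[\varphi(X(t_n,x_n))]\to\mathbb{E}[\varphi(X(s,y))]$, which is (c). The main obstacle lies precisely here: one must control $X(t_n,x_n)$ when both arguments move — handled by splitting the increment into a spatial part (Lemma \ref{lemma373}) and a temporal part (path continuity) — and then, because the functionals $\varphi\in C_\kappa(H)$ are unbounded, upgrade convergence in probability to convergence of expectations, for which the uniform $p$-th moment bound \eqref{eq:(4.1)tilde} with $p$ strictly above $m$ is exactly what is needed; for $C_b(H)$ this difficulty disappears, bounded convergence sufficing.
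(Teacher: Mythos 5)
Your proposal is correct and follows essentially the same route as the paper: conditions (1)--(3) of Theorem \ref{ts1} from the moment estimate \eqref{eq:(4.1)tilde}, and (4)--(5) via Proposition \ref{prop:3.5} by proving joint continuity of $(t,x)\mapsto P_t\varphi(x)$, splitting the increment of $X$ into a spatial part handled by Lemma \ref{lemma373} and a temporal part handled by path continuity, and then upgrading convergence in $\mathbb P$-measure to convergence of expectations by uniform integrability. The only (cosmetic) difference is in the last step: you obtain uniform integrability of $\varphi(X(t_n,x_n))$ directly from an $L^{p/m}$-bound with $p>m$, whereas the paper splits $\varphi$ into $\|\varphi\|_\kappa\cdot\kappa^{-1}$ plus a bounded factor and invokes generalized dominated convergence — and your explicit choice $p=\max\{m+1,\beta+2\}$ is in fact slightly more careful about which exponent \eqref{eq:(4.1)tilde} is applied with.
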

\begin{claimproof}{1}
Clearly, $(P_t)_{t\geq0}$ and $\mu_t(x,dy)$, $t\geq0$, $x\in H$, satisfy conditions (1), (2), (3) in Theorem \ref{ts1}
with $E:=H$ (equipped with its norm topology) and $\kappa=1$.
To show that also (4) and (5) hold, by Proposition \ref{prop:3.5} we have to show that for every compact $C\subset H$ and every $\varphi \in C_b(H)$
\begin{align}\label{ptcont}
[0,T]\times C\ni(t,x)\longmapsto P_t\varphi(x) \text{ is continuous.}
\end{align}
So, let $t,t_n\in [0,T]$ and $x ,x_n\in H$, $n\in\N$, such that
\begin{align*}
(t_n,x_n)\longrightarrow (t,x)\text{ in $[0,T]\times H$ as } n\to\infty.
\end{align*}
Clearly, it then follows by Lemma \ref{lemma373} that
\begin{align*}
X(t_n,x_n)\longrightarrow X(t,x)\quad \text{in $\mathbb{P}$-measure},
\end{align*}
since $X(t_n,x)\to X(t,x)$ $\mathbb{P}$-a.s. Hence $\mu_{t_n}(x_n,\cdot)=\mathbb{P}\circ X(t_n,x_n)^{-1}\longrightarrow\mathbb{P}\circ X(t,x)^{-1}=\mu_t(x,\cdot)$ weakly as $n\to\infty$ and \eqref{ptcont} follows.
Therefore, $(P_t)$ defined in \eqref{eq:3.9'} is a $C_0$-semigroup on $(C_b(E),\tau^{\mathscr M}_1)$ by Theorem \ref{ts1}, and Claim 1 is proved.
\end{claimproof}\\
\begin{claimproof}{2}\label{Claim2'Proof}
Obviously, $(P_t)_{t \geq 0}$ satisfies (1),(2) and (for $\kappa$ as in \eqref{eq 4.1'}) also (3) in Theorem \ref{ts1}, since by \eqref{eq:(4.1)tilde} (applied with $p = m$) for some $C_T \in (0, \infty)$ we have 
\begin{align}\label{eq:4.4'}
P_t\Big(\frac{1}{\kappa}\Big)(x)\leq C_T \frac{1}{\kappa(x)}, \quad \text{for all } t \in [0, T],\, x \in H.
\end{align}
As above (4) and (5) in Theorem \ref{ts1} follow from \eqref{ptcont} above, however, to be proved for all $\varphi \in C_\kappa(H)$. So, let $\varphi \in C_\kappa(H)$ and $t_n \to t$ in $[0, T],\; x_n \to x$ in $H$. Then 
\begin{align*}
 \vert P_{t_n}\varphi (x_n) - P_t \varphi(x)\vert &= \big\vert \E[\varphi (X(t_n, x_n)) - \varphi(X(t, x))]\big\vert\\
&\leq \| \varphi \|_\kappa \E \Big[ \big\vert \|X(t_n, x_n)\|^m_H - \|X(t, x)\|^m_H \big\vert\Big]\\
&\quad + \E \Big[  \big\vert (\varphi\kappa)(X(t_n, x_n)) - (\varphi\kappa)(X(t, x)) \big\vert (1 + \|X(t, x)\|^m_H )\Big],
\end{align*}
which converges to zero as $n \to \infty$, since we already know from the proof of Claim 1 that $X(t_n, x_n) \to X(t, x)$ in $\P$-measure and since $\|X(t_n, x_n)\|_H^m,\; n \in \N$, are uniformly integrable by \eqref{eq:(4.1)tilde},
so that the generalized Lebesgue's dominated convergence theorem applies.
Hence Theorem \ref{ts1} implies Claim 2.
\end{claimproof}

\subsection{Transition semigroups of mild solutions to SDEs on Hilbert spaces with bounded weak topology}\label{sec:bw}\hfill\\
Let $H$ be a separable Hilbert space with inner product $\lb\cdot,\cdot\rb_H$ and norm $\|\cdot\|_H$. We denote the space $H$ endowed with the bounded weak topology by $H_{bw}$. 
In this section we will consider the following stochastic evolution equation in $H$: 
\be{eq_fg}
dX(t)=\la AX(t)+F(X(t))\ra\,dt+G(X(t))\,dW(t),\quad X(0)=x\in H\,.
\ee
We assume that
\begin{itemize}
 \item $W$ is a cylindrical Wiener process on a separable Hilbert space $U$, \\
\item $A$ generates a $C_0$-semigroup $T_t,\ t\geq 0,$ on $H$, \\
\item $F\colon H\to H$ satisfies the Lipschitz condition with a constant $L$:
\[\|F(x)-F(y)\|_H\le L\|x-y\|_H,\quad \text{for all } x,y\in H,\]
\item $G\colon E\to L(U,E)$ ($:=$ all continuous linear operators from $U$ to $H$) is strongly measurable and satisfies the conditions 
\[\left\|T_t G(x)\right\|^2_{L_2(U,H)}\le k(t)\la 1+\|x\|_H^2\ra,\quad \text{for all }x\in H,\]
and 
\[\left\|T_t\la G(x)-G(y)\ra\right\|^2_{L_2(U,H)}\le k(t)\|x-y\|_H^2,\quad \text{for all }x,y\in H,\]
where $k\in L^1_{loc}(0,\infty)$, $k\ge 0$.
\end{itemize}
Under the above assumptions equation \eqref{eq_fg} has a unique mild  solution in $H$ given by the formula 
\[X(t,x)=T_t x+\int_0^t T_{t-s}F(X(s,x))\,ds+\int_0^t T_{t-s}G(X(s,x))\,dW(s),\quad \text{for all }t\in [0,T].\]
Moreover, by standard arguments we find, that for all $x\in H,\,T>0$ 
\be{eq_fg_est1}
\sup_{t\le T}\E\left\|X(t,x)\right\|_H^m\le C_m(T)\la 1+\|x\|_H^m\ra\,
\ee
and  
\be{eq_fg_est2}
\sup_{x\in B_r}\E\left\|X(t,x)-T_t x\right\|_H^m\le C_m(T,r)K_t^{m/2},\quad\text{for all } t\in [0,T], 
\ee
where $B_r$ denotes the open centered ball of radius $r$ in $H$ and 
\[K_t=\int_0^t(1+k(s))\,ds,\quad \text{for all } t\in[0,T].\]
Let $P_t\vp(x):=\E\vp\la X(t,x)\ra$ for $\vp\in C_{\kappa_m}\la H\ra$. Following the arguments from \cite{gk}, we obtain for 
\begin{align}\label{eq: 4.7'}
\kappa_m := (1 + \Vert x \Vert_H^m)^{-1}, \quad m \geq 2,
\end{align}
that
\be{eq_growth}
\|P_t\|_{C_{\kappa_m}(H)\to C_{\kappa_m}(H)}\le M_me^{\gamma_m }t,\quad\text{for all } t\geq0.
\ee
By an easy modification of the proof in \cite{gk} one can prove that under the above assumptions the semigroup $(P_t)$ is a $C_0$-semigroup in $\la C_m\la H\ra,\tau^{\mc M}_{\kappa_m}\ra$
We will show that $\la P_t\ra$ defines also a $C_0$-semigroup on the space $\la C_m\la H_{bw}\ra,\tau^{\mc M}_{\kappa_m}\ra$. The next proposition extends the result in \cite{ms}. 
\bpr{pr_ms1}
Assume that that the semigroup $T_t,\, t>0,$ is compact on $(H,\|\cdot\|_H)$. Then, the semigroup $(P_t)_{ t\geq 0}$ defines a $C_0$-semigroup on $\la C_{\kappa_m}\la H_{bw}\ra,\tau_{\kappa_m}^{\mc M}\ra$. 
\epr
\begin{proof}
By a result in \cite{ms} we have  $P_t C_b\la H_{bw}\ra\subset C_b\la H_{bw}\ra$ for any $t>0$.
Hence by \eqref{eq_growth} it easily follows that $P_t\colon C_{\kappa_m}(H_{bw})\subset C_{\kappa_m}(H_{bw})$ and that
\be{eq_growth_H_bw}
\|P_t\|_{C_{\kappa_m}(H_{bw})\to C_{\kappa_m}(H_{bw})}\le M_me^{\gamma_m t},\quad \text{for all } t\geq0.
\ee
This is the only part of the proof where compactness of $T_t$ is required.\\
We will show that the semigroup $(S_t)$ satisfies conditions (1) - (5) in part (b) of Theorem \ref{ts1}, where $\mu_t(x,V)=\P(X(t,x)\in V)$ for Borel sets $V\subset H$. We recall here that the Borel $\sigma$-algebras of $H$ and $H_{bw}$ coincide and clearly, the mapping 
\[H_{bw}\ni x\longmapsto \mu_t(x,V)\]
is $\mc B\la H_{bw}\ra$-measurable for every $t\ge 0$ and $V\in B\la H_{bw}\ra$, hence condition (1) of Theorem \ref{ts1} holds.
By \eqref{eq_growth_H_bw} condition (2) of Theorem \ref{ts1} is satisfied as well. 
Invoking \eqref{eq_fg_est1} we obtain for all $x\in H,\,T>0$ 
\[
\int_{H_{bw}}\frac{\mu_t(x,dy)}{\kappa_m(y)}=\E\la 1+\|X(t,x)\|_H^m\ra\le C_m(T)\la 1+\|x\|_H^m\ra,\quad \text{for all }t\in [0,T],\]
and condition (3) of Theorem \ref{ts1} follows.
Since $B_r$ is $bw$-compact for every $r>0$ we can use \eqref{eq_fg_est1} again to show that for every $T>0$ and every $r>0$ the family of measures 
\[\left\{\frac{\kappa_m(x)\mu_t(x,dy)}{\kappa_m(y)}\colon x\in B_r,\,t\le T\right\}\]
is tight, which yields condition (4) of Theorem \ref{ts1}.
It remains to prove that conditon (5) is satisfied and it is enough to prove this condition  for $m=0$. Let $\vp\in C_b\la H_{bw}\ra$.
Let $t_n\to 0$ and $x_n\to x$ weakly with $\sup_{n\ge 1}\left\|x_n\right\|_H\le r$ for a certain $r>0$.
For any $\ep>0$ and $T>0$ we can choose $R\geq r$ such that 
\[\sup_{x\in B_r}\sup_{t\le T}\|T_t x\|_H<R,\]
and 
\[\sup_{x\in B_r}\sup_{t\le T}\P\la\left\|X(t,x)\right\|_H>R\ra<\ep\,.\]
Let $\left\{f_k;\,\left\|f\right\|_H=1\,,k\ge 1\right\}$ be a dense set in the sphere $\left\{f\in H;\, \|f\|_H=1\right\}$. We recall that the metric 
\[\rho(x,y)=\sum_{k=1}^\infty\frac{1}{2^k}\,\frac{\left|\lb x-y,f_k\rb\right|}{1+\left|\lb x-y,f_k\rb\right|},\quad \text{for all }x,y\in B_R,\]
defines a Polish topology identical with the weak topology on $B_R$.
We have 
\[
\begin{aligned}
P_{t_n}\vp\la x_n\ra&=\E\vp\la X\la t_n,x_n\ra\ra  I_{B_R}\la  X\la t_n,x_n\ra\ra+\E\vp\la X\la t_n,x_n\ra\ra I_{B^c_R}\la  X\la t_n,x_n\ra\ra\\
&=\vp(x)\E I_{B_R}\la  X\la t_n,x_n\ra\ra+\delta\la  R,t_n,x_n\ra,
\end{aligned}
\]
where 
\[\begin{aligned}
\delta\la R,t_n,x_n\ra&=\E\la\vp\la X\la t_n,x_n\ra\ra-\vp\la x\ra\ra I_{B_R}\la  X\la t_n,x_n\ra\ra+\E\vp\la X\la t_n,x_n\ra\ra  I_{B^c_R}\la  X\la t_n,x_n\ra\ra\\
&=\delta_1\la R,t_n,x_n\ra+\delta_2\la R,t_n,x_n\ra,
\end{aligned}\]
hence 
\begin{align} 
\left|\delta\la R,t_n,x_n\ra\right|\le \left|\delta_1\la R,t_n,x_n\ra\right|+\frac{C\|\vp\|_\infty}{R}. \label{eq_delta}
\end{align}
Let $\omega$ be the modulus of continuity of the function $\vp$ on $B_R$. Then, 
\[\left|\delta_1\la R,t_n,x_n\ra\right|\le \E\,\omega\la\rho\la X\la t_n,x_n\ra,x\ra\ra  I_{B_R}\la  X\la t_n,x_n\ra\ra.
\]
Setting $\psi(t,x):=T_t x$ we obtain 
\[\rho\la X\la t_n,x_n\ra,x\ra\le \rho\la X\la t_n,x_n\ra,\psi\la t_n,x_n\ra\ra+\rho\la \psi\la t_n,x_n\ra,x\ra.\]
For every $f\in H$ the function 
\[[0,T]\times H_{bw}\ni (t,x)\mapsto\lb \psi(t,x),f\rb_H\]
is continuous, hence 
\[\lim_{n\to\infty}\rho\la \psi\la t_n,x_n\ra,x\ra=0.\]
Therefore, invoking \eqref{eq_fg_est2} we find that for every $\ve>0$ 
\[\lim_{n\to\infty}\P\la\rho\la X\la t_n,x_n\ra,x\ra>\ve\ra=0,\]
hence $\left|\delta_1\la R,t_n,x_n\ra\right|\to 0$ for $n\to\infty$. Finally, again by \eqref{eq_fg_est1},
\[\begin{aligned}
\limsup_{n\to \infty}\left|P_{t_n}\vp\la x_n\ra-\vp(x)\right|&\le|\vp(x)|\limsup_{n\to \infty} \E I_{B^c_R}\la  X\la t_n,x_n\ra\ra+\frac{C\|\vp\|_\infty}{R}\\
&\le \frac{2C\|\vp\|_\infty}{R},
\end{aligned}
\]
and condition (5) of Theorem \ref{ts1} follows by taking $R\to\infty$. 
\end{proof}
\subsection{Generalized Mehler semigroups on Banach spaces with norm topology}\label{sec:norm.top}\hfill\\
Let $E$ be a separable Banach space and let $\kappa=1$. Let $(T_t)_{t\geq0}$ be a $C_0$-semigroup of linear operators on $E$. Furthermore, let $\mu_t$, $t\in[0,\infty)$, be probability measures on $(E,\mathcal{B}(E))$ such that:
\begin{align}
&[0,\infty)\ni t\longmapsto \mu_t\in{M}_b(E) \text{ is narrowly (i.e., $\sigma({M}_b(E),C_b(E))$-) continuous.}\label{eq:3.14}\\
&\mu_{t+s}=\left(\mu_t\circ T_s^{-1}\right)*\mu_s\quad t,s\in[0,\infty).\label{eq:3.15}
\end{align}
Define, for $t\in[0,\infty)$, $x\in E$,
\begin{align}\label{eq:3.16}
P_t\varphi(x):=\int_E \varphi(T_tx+y)\ \mu_t(dy),\quad \varphi\in C_b(E).
\end{align}
Then, $(P_t)_{t\geq0}$ is (by \eqref{eq:3.15}) a semigroup of linear operators on $C_b(E)$, called a "generalized Mehler semigroup". In this generality such semigroups have been first introduced in \cite{bogroeschmu} and then further analyzed in \cite{fr} and many other papers (see e.g. the very recent work \cite{afpp} and the references therein). They appear as transition semigroups of Ornstein-Uhlenbeck process with Levy noise, i.e. solutions to the following SDEs on $E$
\begin{align}\label{eq:4.17'}
d X(t)=AX(t)dt+dY(t),
\end{align}
where $A$ is the generator of $(T_t)$ on $E$ and $Y(t)$, $t\geq0$, is the underlying Levy process corresponding to the Levy characteristics appearing in the Levy-Khintchine representation of the exponent of the Fourier transforms of $\mu_t$, $t\geq0$. We refer to \cite{fr} for details. Obviously, $(P_t)$ has a representation as in \eqref{eq:3.3'} with
\begin{align}\label{eq:3.17}
\mu_t(x,dy):=(\delta_{T_tx}*\mu_t)(dy),\quad t\in[0,\infty),\ x\in E.
\end{align}
So, clearly conditions (1)--(3) in Theorem \ref{ts1} hold. To show that $(P_t)$ in \eqref{eq:3.16} is a $C_0$-semigroup on $(C_b(E),\tau_1^{\mathscr M})$, it remains to prove that (4) and (5) hold, for which by Proposition \ref{prop:3.5} it suffices to show that for all $\varphi\in C_b(E)$ the map
\begin{align*}
[0,\infty)\times E\ni(t,x)\longmapsto \int_E\varphi(T_tx+y)\ \mu_t(dy)
\end{align*}
is continuous. So, let $x_n,x\in E$, $t_n,t\in[0,\infty)$ such that $\lim_{n\to\infty}t_n=t$ and $\lim_{n\to\infty}x_n=x$ (w.r.t. the norm topology on $E$). Then we have to show that for all $\varphi\in C_b(E)$
\begin{align*}
\int_E\varphi\ d(\delta_{T_{t_n}x_n}*\mu_{t_n})\longrightarrow \int_E\varphi\ d(\delta_{T_tx}*\mu_t)\quad\text{as $n\to\infty$}.
\end{align*}
By the Portemanteau theorem we may assume that $\varphi$ is Lipschitz with Lipschitz constant less or equal to one. Then, we have
\begin{align*}
\Big|\int_E& \varphi (T_tx+y)\ \mu_t(dy)-\int_E\varphi(T_{t_n}x_n+y)\ \mu_{t_n}(dy) \Big|\\
&\leq\Big|\int_E \varphi(T_tx+y)\ (\mu_t-\mu_{t_n})(dy)\Big|+\|T_tx-T_{t_n}x_n\|_E,
\end{align*}
which clearly converges to zero as $n\to\infty$ by \eqref{eq:3.14} and since $(T_t)$ is a $C_0$-semigroup on $E$.\\ \\
\subsection{Generalized Mehler semigroups on Banach spaces with bounded weak topology}\label{Section 4.4}\hfill\\
Let $\kappa=1$ and $E$ be a reflexive separable Banach space (in particular, $E$ is as in Remark \ref{rem2.2} (3)). Let us now consider $(E,\tau_{bw})$, i.e., $E$ equipped with the bounded weak topology (see Remark \ref{rem2.2} (3)). Then, since $E$ is separable, we have that $\mathscr B((E,\|\cdot\|_E))=\mathscr B((E,\tau_{bw}))$. Let $\mu_t$, $t\in [0,\infty)$, be as in (iii) above, satisfying \eqref{eq:3.15}, but instead of \eqref{eq:3.14}, we assume the weaker condition
\begin{align}\label{eq:3.18}
&[0,\infty)\ni t\longmapsto \mu_t\in\mathscr M_b((E,\tau_{bw}))\ \\
&\text{is narrowly \Big(i.e., $\sigma\big(\mathscr M_b((E,\tau_{bw})),C_b((E,\tau_{bw}))\big)$\Big) continuous.}\nonumber
\end{align}
Let $(P_t)$ be defined as in \eqref{eq:3.16}. We want to show that again by Theorem \ref{ts1} and Proposition \ref{prop:3.5} $(P_t)$ is a $C_0$-semigroup on $C_b\big(((E,\tau_{bw})),\tau^{\mathscr M}_1\big)$. We recall that $C_b((E,\tau_{bw}))$ are exactly the bounded sequentially weak$^*$-continuous functions on $E$ and that each $\tau_{bw}$-compact $C\subset E$ is metrizable (see Remark \ref{rem2.2} (3)). Obviously $(P_t)$ is a semigroup of linear operators on $C_b((E,\tau_{bw}))$ satisfying conditions (1)-(3) in Theorem \ref{ts1}. It remains to prove (4) and (5), which again will follow by Proposition \ref{prop:3.5}. So let $t_n\to t$ in $[0,T]$, $x_n\to x$ in $(E,\tau_{bw})$ and $\varphi\in C_b((E,\tau_{bw}))$. We have to show that
\begin{align}\label{eq:3.19}
\lim_{n\to\infty} P_{t_n}\varphi(x_n)=P_t\varphi(x).
\end{align}
Let us recall the definition of the finitely based $C_b^1$-functions, i.e.
\begin{align*}
\mathscr FC_b^1:=\{f(l_1,\dots,l_m)\ |\ m\in\N,\ f\in C_b^1(\R^m),\ l_1,\dots,l_m\in E^*\}.
\end{align*}
By Corollary \ref{cor_dense} in the Appendix and the Hahn-Banach theorem $\mathscr FC_b^1$ is dense in\\ $C_b((E,\tau_{bw}),\tau_1^{\mathscr M})$. By \eqref{eq:3.16} we have 
\begin{align}\label{eq:3.20}
|P_t\varphi(x)-P_{t_n}\varphi(x_n)|&\leq\left|\int_E\varphi(T_tx+y)\ (\mu_t-\mu_{t_n})(dy) \right|\\
&\quad +\int_E|\varphi(T_tx+y)-\varphi(T_{t_n}x_n+y)|\ \mu_{t_n}(dy).\notag
\end{align}
Clearly, since its integrand is in $C_b((E,\tau_{bw}))$, the first integral on the r.h.s. of \eqref{eq:3.20} converges to zero as $n\to\infty$ by assumption \eqref{eq:3.18}. To see that this also holds for the second, let $\ve>0$. Since $(E,\tau_{bw})$ is a Skorohod space (see Remark \ref{remproho}), by \eqref{eq:3.18} there exists a $\tau_{bw}$-compact set $K_\ve\subset E$ such that 
\begin{align}\label{eq:3.21}
\sup_{t\in[0,T]}\mu_t(K_\ve^c)<\ve.
\end{align}
Since $T_{t_n}x_n\underset{n\to\infty}{\longrightarrow}T_tx$ weakly, there exists a $\tau_{bw}$-compact set $C\subset E$ such that
\begin{align*}
\{T_{t_n}x_n\ |\ n\in\N\}\cup\{T_tx\}\subset C.
\end{align*}
Furthermore, since $K_\ve+C$ is $\tau_{bw}$-compact, there exists $\psi=f(l_1,\dots,l_m)\in\mathscr FC_b^1$ such that 
\begin{align}\label{eq:3.22}
p_{1,K_\ve+C}(\varphi-\psi)<\ve.
\end{align}
Clearly, we may assume that $\|\psi\|_\infty\leq\|\varphi\|_\infty$. Then we can estimate the second term on the r.h.s. of \eqref{eq:3.20} by
\begin{align*}
\int_{K_\ve} &|\varphi-\psi|(T_tx+y)\ \mu_{t_n}(dy)+\int_{K_\ve}|\varphi-\psi|(T_{t_n}x_n+y)\ \mu_{t_n}(dy)\\
&+4\|\varphi\|_\infty\ \mu_{t_n}(K_\ve^c)+\|Df\|_\infty\ |P_m(T_tx-T_{t_n}x_n)|_{\R^m},
\end{align*}
where $P_m(z)=(l_1(z),\dots,l_m(z))$, $z\in E$. Letting first $n\to\infty$ and then $\ve\to0$ by \eqref{eq:3.21}, \eqref{eq:3.22} we obtain \eqref{eq:3.19}.

\section{Strong and Weak infinitesimal generators}

\subsection{Generators and (Markov) core operators}

As ususal, we define the infinitesimal generator as the time derivative at time zero in the underlying topology and the corresponding weak generator.

\bd{def.generator}\label{Section5.1}
Let $\la P_t\ra_{t\geq 0}$ be a $C_0$-semigroup on $\la\ck,\tm\ra$. Then, we define its 
\textit{infinitesimal generator} $L$ by the formula 
\begin{equation}L\phi := \tm-\lim_{t\to 0}\frac {P_t\phi -\phi}t\quad \text{for}\quad \phi\in D(L):= \left\{\psi\in C_{\kappa}(E)\colon \tm- \lim_{t\to 0}\frac {
		P_t\psi -\psi}t\;\; \text{\rm exists}\right\}. \label{a1}
\end{equation}
\ed

In order to formulate the next result, we first recall that, if $X$ is any sequentially complete locally convex linear space, then a continuous function $f\colon [0,T]\to X$ is Riemann integrable, and the function $F(t)=\int_0^tf(s)\,ds$ is differentiable with $\frac{dF}{dt}=f(t)$ for every $t\in(0,T)$ (see \cite{falb} for details). We also recall that, by Theorem \ref{cor_comp}, the space $\la\ck,\tm\ra$ is complete. 

In the next propostion we collect some known properties of $C_0$-semigroups of operators on $\ck$. Parts (b)--(e) were proved in a more general framework in \cite{komura}, part (a) in \cite{albanese2016}. The appearing integrals are all Riemann integrals taking values in the locally convex space $(\ck,\tm)$.
\bpr{pro_gen1}\label{pro_gen1}
Let $P=\la P_t\ra_{t\geq 0}$ be a $C_0$-semigroup on $\la\ck,\tm\ra$ consisting of linear operators with generator $L$.\ Then, the following holds:
\begin{enumerate}
\item[(a)] The $\tm$-closure of $D(L)$ is identical with $\ck$.
	\item[(b)] The generator $L$ is $\tm$-closed, that is for every net $\la \vp_\alpha\ra\subset D(L)$, such that $\vp_\alpha\to\vp$ and $L\vp_\alpha\to\psi$ we have $\vp\in D(L)$ and $L\vp=\psi$. 
\item[(c)] For every $\vp\in D(L)$ we have $P_t\vp\in D(L)$ and $LP_t\vp=P_tL\vp$.
In particular, each $P_t \colon D(L) \to D(L)$ is continuous in the $\tm$-graph topology of $L$ on $D(L)$. 
\item [(d)] For every $\vp\in D(L)$ 
	\[P_t\vp-\vp=\int_{0}^{t} P_sL\vp\,ds\,. \]
Moreover, for every $\vp\in\ck$ 
\[\int_0^tP_s\vp\,ds\in D(L),\quad\mathrm{and}\quad P_t\vp-\vp=L\int_{0}^{t} P_s\vp\,ds\,. \]
	\item[(e)] 
	For every $\lambda >\omega$ and $\vp\in\ck$, the Riemann integral 
	\[J(\lambda )\vp =\int_0^{\infty}e^{-\lambda t}P_t\vp\,dt ,\]
	is convergent in the topology $\tm$ and $J(\lambda)=(\lambda-L)^{-1}$. In particular, $P$ is the unique $C_0$-semigroup
	on $(\ck, \tm)$ of linear operators with generator $L$.
    \item[(f)]
		The Euler formula holds, i.e., for all $\phi \in \ck$		
		\[P_t\phi = \tm - \lim_{n\to\infty}\la\frac{n}{t}(\frac{n}{t}-L)^{-1} \ra^{n}\phi. \]

\end{enumerate} 
\begin{proof}
According to the  remarks preceding this theorem,
we only have to prove (\zx f\xz). But by Remark \ref{rem:5.6} below this is an immediate consequence of Theorem \ref{rem:5.6} in \cite{bff} (attributed there to F. K\"uhnemund, see \cite{kuhnemund})
\end{proof}

\epr
\bpr{pr_core} \label{pr_core}
Let $\mc D\subset D(L)$ be a $\tm$-dense set in $\ck$ such that $P_t\mc D\subset \overline{\mc D}^L$ for every $t>0$, where $\overline{\mc D}^L$ denotes the closure of $\mc D$ in the $\tm$-graph topology of $L$ on $D(L).$
Then $\mc D$ is a $\tm$-core for $L$. 
\epr
\begin{proof}
Since by Proposition \ref{pro_gen1} (c) each $P_{t}$ is continuous in the $\tm$ -graph topology of $D(L)$,
we have that $P_t(\overline{{\mc D}^{L}})\subset\overline{\mc D}^L,$ so we may assume that ${\mc D}=\overline{\mc D}^L.$
Now let $\varphi \in D(L)$.
We have to show that $\varphi \in \overline{\mc D}^L= {\mc D}$. There exists a net $(\varphi_\alpha) \subset {\mc D}$ such that
\[ \tm-\lim_{\alpha}\phi_\alpha=\phi\]
We claim that for every $\alpha$ and $ t\geq0$
\begin{align}\label{(5.1')}
 \int_{0}^{t}P_s\phi_{\alpha}\;ds\in {\mc D}.
\end{align}
Indeed, this integral initially converges in $(\ck , \tm)$,
but again by Proposition \ref{pro_gen1} (c) it also converges in $\overline{\mc D}^L={\mc D}$ in the $\tm$ -graph topology of $L$ on $D(L)$.
So \eqref{(5.1')} holds.
Furthermore, by definition of the Riemann integral in $(\ck , \tm)$ we have for all $t\geq0$
\[ \tm -\lim_{\alpha}\int_{0}^{t}P_s\phi_\alpha\; ds=\int_{0}^{t}P_s\phi\; ds\]
and since by Preposition \ref{pro_gen1} (d)
\[ \tm -\lim_\alpha L\int_{0}^t P_s \varphi_\alpha\; ds=\tm -\lim_\alpha(P_t\phi_\alpha-\phi_\alpha)=P_t\phi-\phi ,\]
we conclude that
\begin{align}\label{(5.1'')}
 \int_0^t P_s\phi\; ds \in \overline{\mc D}^L={\mc D}.
\end{align}
Finally, since $\phi \in D(L)$, it follows that
\[\tm-\lim_{t\to0}\frac{1}{t}\int_0^tP_s\phi\; ds=\phi\] 
and
\[\tm-\lim_{t\to0}L\bigg(\frac{1}{t}\int_0^tP_s\phi\; ds\bigg)=\tm-\lim_{t\to0}\frac{1}{t}\int_0^tP_sL\phi\; ds=L\phi.\]
Therefore, by \eqref{(5.1'')}, $\phi \in \overline{\mc D}^L={\mc D}.$
\\
\end{proof}

\bd{def_lw}  Let $P=\la P_t\ra_{t\geq 0}$ be a $C_0$-semigroup on $\la\ck, \tm\ra$.\ We say that $\vp\in D\la L_w\ra\subset\ck$ if and only if
there exists some $f\in\ck$ such that 
$\frac{1}{t}(P_t\varphi-\varphi) \xrightarrow{t\to 0} f$ weakly in $\la C_\kappa(E),\ \tm\ra$, i.e.,
\begin{equation}\label{a2}
	\lim_{t\to 0}\int_E\frac {P_t\phi (x)-\phi (x)}t\;\nu 
	(dx)=\int_Ef(x)\;\nu (dx)
\end{equation}
for each $\nu\in M_\kappa(E)$. In this case, we define the operator $L_w$ by the formula 
\[L_w\phi =f.\]
We say that $L_w$ is the weak generator of the $C_0$-semigroup $P$ on $\la\ck,\tm\ra$ 
with domain $D\left(L_w\right)$.
\ed


\begin{theorem}\label{ts2}
	Let $\la P_t\ra_{t\geq 0}$ be a $C_0$-semigroup on $\la\ck,\tm\ra$ consisting of linear operators. Then,
	$$L=L_w.$$
	Moreoever, $\phi\in D(L)$ if and only if 
	\begin{equation}\sup_{t\le 1}\left(\frac 1t\left\|P_t\phi -\phi\right
		\|_\kappa\right)<\infty,\label{a3}\end{equation}
	\begin{equation}f(x):=\lim_{t\to 0}\frac {P_t\phi (x)-\phi (x)}t\quad\mbox{\rm exists for all }x\in E,\label{a4}\end{equation}
	and $f\in\ck$. In this case, $f=L\vp$. \\
\end{theorem}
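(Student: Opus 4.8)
The inclusion $D(L)\subseteq D(L_w)$, together with $L_w\vp=L\vp$ on $D(L)$, is immediate: $\tm$-convergence of $\tfrac1t\la P_t\vp-\vp\ra$ forces convergence when tested against any $\nu\in M_\kappa(E)$, because by Theorem \ref{th_md} the narrowly topologised $M_\kappa(E)$ is exactly the dual of $\la\ck,\tm\ra$. The substance is the reverse inclusion $D(L_w)\subseteq D(L)$, which I would obtain through an integral representation. Fix $\vp\in D(L_w)$ and write $f:=L_w\vp\in\ck$. Each $P_t$ being linear and $\tm$-continuous, the assignment $\nu\mapsto P_t^\ast\nu:=\nu\circ P_t$ maps $M_\kappa(E)$ into itself. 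For $\nu\in M_\kappa(E)$ set $\psi_\nu(t):=\int_E P_t\vp\,d\nu$, which is continuous on $[0,\infty)$ by strong $\tm$-continuity of $P$ (Remark \ref{rem2}(i)). Using the semigroup law in the form $\tfrac1h\la P_{t+h}\vp-P_t\vp\ra=P_t\bigl(\tfrac1h\la P_h\vp-\vp\ra\bigr)$ and the definition of $L_w$ applied to $P_t^\ast\nu$, the right derivative of $\psi_\nu$ at every $t\ge0$ exists and equals $\int_E P_tf\,d\nu$. Since $s\mapsto P_sf$ is $\tm$-continuous (again Remark \ref{rem2}(i)) and $\nu$ is $\tm$-continuous, $t\mapsto\int_E P_tf\,d\nu$ is continuous; a continuous function on $[0,\infty)$ with an everywhere-defined, continuous right derivative is $C^1$, so
\[\psi_\nu(t)-\psi_\nu(0)=\int_0^t\!\!\int_E P_sf\,d\nu\,ds=\int_E\Bigl(\int_0^t P_sf\,ds\Bigr)d\nu,\]
the last equality because the continuous linear functional $\nu$ commutes with the $\tm$-Riemann integral $\int_0^tP_sf\,ds\in\ck$, which exists by sequential completeness of $\la\ck,\tm\ra$ (Theorem \ref{cor_comp}). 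As $M_\kappa(E)$ separates the points of $\ck$, this yields $P_t\vp-\vp=\int_0^tP_sf\,ds$ in $\ck$ for all $t\ge0$ (cf.\ Proposition \ref{pro_gen1}(d)). Dividing by $t$ and letting $t\downarrow0$, the right-hand side $\tm$-converges to $P_0f=f$ by $\tm$-continuity of $s\mapsto P_sf$; hence $\vp\in D(L)$ and $L\vp=f$. Therefore $L=L_w$.

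For the ``moreover'' statement, suppose first $\vp\in D(L)$ and put $f:=L\vp\in\ck$. Since $\tc\subseteq\tm$, the $\tm$-convergence $\tfrac1t\la P_t\vp-\vp\ra\to f$ entails pointwise convergence, which is \eqref{a4} with limit $f$. For \eqref{a3}: if it failed there would be $t_n\in(0,1]$ with $\tfrac1{t_n}\n P_{t_n}\vp-\vp\n_\kappa\to\infty$; passing to a subsequence with $t_n\to t_0\in[0,1]$, the sequence $\tfrac1{t_n}\la P_{t_n}\vp-\vp\ra$ $\tm$-converges --- to $\tfrac1{t_0}\la P_{t_0}\vp-\vp\ra$ if $t_0>0$, by $\tm$-continuity of $t\mapsto P_t\vp$, and to $f$ if $t_0=0$, since $\vp\in D(L)$ --- hence is $\n\cdot\n_\kappa$-bounded by Proposition \ref{t14}, a contradiction. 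Conversely, assume \eqref{a3}, \eqref{a4} and $f\in\ck$, and set $C:=\sup_{t\le1}\tfrac1t\n P_t\vp-\vp\n_\kappa<\infty$. Then $\bigl|\tfrac1t\la P_t\vp(x)-\vp(x)\ra\bigr|\le C\,\kappa(x)^{-1}$ for all $x\in E$ and $0<t\le1$, while $\kappa^{-1}\in L^1\la|\nu|\ra$ for every $\nu\in M_\kappa(E)$. By \eqref{a4} and dominated convergence, $\int_E\tfrac1t\la P_t\vp-\vp\ra d\nu\to\int_E f\,d\nu$ for every $\nu\in M_\kappa(E)$, i.e.\ $\vp\in D(L_w)$ with $L_w\vp=f$; by the first part $\vp\in D(L)$ and $L\vp=f$.

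The main obstacle is the passage from weak differentiability to the strong integral identity $P_t\vp-\vp=\int_0^tP_sf\,ds$: one must upgrade the existence of a continuous right derivative of each scalar map $\psi_\nu$ to genuine $C^1$-regularity, and justify that the vector-valued $\tm$-Riemann integral commutes with every $\nu\in M_\kappa(E)$ --- both ingredients rely on sequential completeness of $\la\ck,\tm\ra$ and on the identification of its dual. Everything after that identity (dividing by $t$, dominated convergence, the separation argument) is routine. One should also keep in mind that the limits ``$t\to0$'' are one-sided limits along the continuum, so the reduction of \eqref{a3} to a statement about sequences genuinely requires extracting a convergent net of times and treating the cases $t_0>0$ and $t_0=0$ separately.
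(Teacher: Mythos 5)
Your proposal is correct and follows essentially the same route as the paper's proof (the Pazy-style argument: duality via Theorem \ref{th_md} for $L\subset L_w$, the integral identity $P_t\vp-\vp=\int_0^tP_s L_w\vp\,ds$ obtained by testing against $\nu\in M_\kappa(E)$ and interchanging $\nu$ with the $\tm$-Riemann integral for $L_w\subset L$, and Proposition \ref{t14} plus dominated convergence for the ``moreover'' part). You merely supply more detail than the paper does at two points it states tersely --- the justification that $t\mapsto\int_E P_t\vp\,d\nu$ is $C^1$ via the adjoint $P_t^\ast\nu$ and the right-derivative lemma, and the subsequence-extraction argument reducing \eqref{a3} to Proposition \ref{t14} --- both of which are accurate.
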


\begin{proof}
We start with the proof that $L=L_w$, which is a modification of the proof of \cite[p.\ 43, Corollary 1.2]{pazy} given for $C_0$-semigroups in Banach spaces.
If $\phi \in D(L)$, then \eqref{a2} follows by Theorem \ref{th_md}.
Hence, $L\subset L_w$. To show that $L_w\subset L$ choose $\phi\in D\left(L_w\right)$.
Then, by the semigroup property, $t\mapsto P_t\varphi$ is a weakly continuously differentiable curve in $\la C_\kappa(E),\ \tm\ra$.
Hence,
\[\int_E\big(P_t\phi (x)-\phi (x)\big)\;\nu (dx)=
 \int_0^t\int_E P_s L_w \phi(x)\; \nu(dx)\; ds= 
\int_E\left(\int_
0^tP_sL_w\phi\;ds\right)(x)\;\nu (dx),\]
where for the secound equality we used that a continuous linear functional on $(C_\kappa(E), \tm)$ interchanges with the $C_\kappa(E)$-valued Riemann integral.
Taking $\nu := \delta_{x}$, for $x \in E$, we get, for all $x \in E$,
\begin{align}\label{eq:5.6'}
P_t\phi(x) -\phi(x) =\int_0^tP_sL_w\phi(x)\; ds = \bigg(\int_0^tP_s L_w\;\phi\; ds\bigg)(x),
\end{align} where the last integral is the Riemann integral in  $\la\ck,\tm\ra$ of $s \mapsto P_sL_w\phi$, which is a continuous curve in $(C_\kappa(E), \tm)$, since $L_w\phi \in C_\kappa (E)$. 
It follows that 
\[\tm-\lim_{t\downarrow 0}\frac {P_t\phi -\phi}t=L_w\phi,\]
 which shows that $\phi\in D(L)$ and concludes the proof that $L=L_w$. 
\\
Assume that \eqref{a3} and \eqref{a4} hold. Then, one immediately sees that \eqref{a2} is satisfied for 
every measure $\nu\in M_\kappa(E)$, hence $\phi\in D(L_w)= D(L)$ with $f=L\phi$ by the first part of the proof. 
Conversely, assume that $\phi\in D(L)$. Then, \eqref{a4} with $f = L\phi$ is obvious and,
by Proposition \ref{t14}, \eqref{a3} holds.
\end{proof}

\begin{remark}\label{rem:5.6}
It is very easy to check that in the linear case our $C_0$-semigroups on $(C_\kappa(E), \tm)$ are special cases of the bi-continuous semigroups introduced in \cite{kuhnemund}.
We also refer to \cite{farkas-budde}, \cite{farkas-sarhir}, \cite{kraaij} and \cite{kunze} for further developments,
and furthermore to \cite{federico}, where only a sequential $C_0$-property is required.
In particular, according to the main result in \cite{kuhnemund} there is a Hille-Yosida-type theorem for characterizing their infinitesimal generators defined in Definition \ref{def.generator}.
Likewise we have a characterization for the latter through Lumer-Phillips-type theorems in the recent papers \cite{budde-wegner} (see Theorems 3.6 and 3.15 therein) and \cite{kruse-seifert}.
\end{remark}
 Next we discuss examples of infinitesimal generators for $C_0$-semigroups on $(C_\kappa(E), \tm)$,
which are given by transition semigroups of solutions to S(P)DEs,
and the relation to the Kolmogorov operators associated to the latter.
In each case, we shall proceed in two steps.
First, we shall prove that the respective infinitesimal generator is an extension of the Kolmogorov operator associated to the latter.
Second, we shall prove ``strong uniqueness'' or at least ``Markov uniqueness'' for the respective Kolmogorov operator,
which thus uniquely determines the infinitesimal generator of the corresponding $C_0$-semigroup on $(C_\kappa, \tm)$.
We start with the following definitions.
\begin{definition}\label{def:5.7}
Let $P_t,\, t\geq 0$, be a $C_0$-semigroup on $\la C_\kappa(E),\,\tm\ra$ with infinitesimal generator $\la L,\, D(L)\ra$
and let $\la L_0,\, D(L_0)\ra$ be a densely defined (i.e., $D(L_0)$ is dense in $\la C_\kappa(E),\,\tm\ra$) linear operator on $C_\kappa(E)$
such that $L_0\subset L$ (i.e., $D(L_0)\subset D(L)$ and $L_0\varphi = L\vp$ for all $\vp\in D(L_0)$).
\begin{enumerate}
	\item[(i)] The operator $\la L_0,\, D(L_0)\ra$ is called a \textit{core operator} for $\la L,\, D(L)\ra$ if the closure of its graph \hspace{0.05cm}
$\Gamma(L_0) = \left\{ \la \vp,\,L_0\vp\ra\in C_\kappa(E)\times C_\kappa(E) \mid \vp\in D(L_0) \right\}$
		in \\
		$\la C_\kappa(E),\,\tm\ra \times \la C_\kappa(E),\,\tm\ra$ coincides with the graph $\Gamma(L)$.
	\item[(ii)] Suppose that $\kappa$ is bounded and that $\la P_t\ra_{t\geq 0}$ is \textit{Markov},
		i.e. $C_\kappa(E)\ni\vp\geq 0\Rightarrow P_t\vp\geq 0,\, t\geq0;$ and $P_t 1 = 1,\, t\geq 0$.
		The operator $\la L_0,\, D(L_0)\ra$ is called a \textit{Markov core operator} for $\la L,\, D(L)\ra$
		if $\la L,\, D(L)\ra$ is the only operator with $L_0\subset L$, which is the infinitesimal generator of a Markov $C_0$-semigroup on $\la C_\kappa(E),\,\tm\ra$.
\end{enumerate}
\end{definition}
\begin{remark}\label{rem:5.8}
Suppose $\la L_0,\, D(L_0)\ra$ is a core operator for $\la L,\, D(L)\ra$.
Then $\la L,\, D(L)\ra$ is the unique operator with $L_0\subset L$, which is the infinitesimal generator of a $C_0$-semigroup on $\la C_\kappa(E),\,\tm\ra$.
Indeed, if $( \tilde L,\, D(\tilde L))$ is another such operator, it follows that $ L\subset \tilde L$, hence $1-L\subset 1-\tilde L$.
But, by Proposition \ref{pro_gen1} (e),
\[ C_\kappa(E) = (1-\tilde L) ( D(\tilde L)) \supset (1- L) ( D(L)) = C_\kappa(E), \]
hence $D(\tilde L) = D(L)$, because $1-\tilde L$ is injective (e.g., again by Proposition \ref{pro_gen1} (e)).
So, $( \tilde L,\, D(\tilde L)) = \la L,\, D(L)\ra$ and, as a consequence, if $\kappa$ is bounded and $(P_t)_{t\geq 0}$ is Markov,
then $\la L_0,\, D(L_0)\ra$ is also a Markov core operator for $\la L,\, D(L)\ra$.
\end{remark}
\begin{theorem}\label{prop:5.9}
Let $\kappa$ be bounded and $(P_t)_{t\geq 0}$ be a Markov $C_0$-semigroup on $\la C_\kappa(E),\,\tm\ra$ with infinitesimal generator $\la L,\, D(L)\ra$
and let $\la L_0,\, D(L_0)\ra$ be a densely defined linear operator on $C_\kappa(E)$ such that $L_0\subset L$.
Suppose that, for every $x\in E$, the Fokker-Planck-Kolmogorov equation
\begin{align}\label{eq:5.5}
\int\vp(y)\;\nu_t(dy) = \int\vp(y)\;\delta_x(dy) + \int_0^t \int L_0\;\vp(y)\;\nu_s(dy)\;ds,\quad t\geq0,\, \vp\in D(L_0),
\end{align}
(see \cite{fpke}) has a unique solution $(\nu_t)_{t\geq0}\in C([0,\infty),\,M_\kappa^+(E))$,
such that $\nu_t(E) = 1$ for all $t \in [0, \infty)$ and such that 
\begin{align}\label{eq:5.7'}
\int_0^T \int_E \frac{1}{\kappa} \; d\nu_t \; dt < \infty, \quad T > 0.
\end{align}
Then, $\la L_0,\, D(L_0)\ra$ is a Markov core operator for $\la L,\, D(L)\ra$, on $(C_\kappa(E), \tm)$.
\end{theorem}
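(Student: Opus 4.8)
The plan is to reduce everything to the assumed uniqueness of the Fokker--Planck--Kolmogorov equation. Concretely, I would take an arbitrary Markov $C_0$-semigroup $\la\tilde P_t\ra_{t\ge 0}$ on $\la\ck,\tm\ra$ whose infinitesimal generator $\la\tilde L,D(\tilde L)\ra$ extends $\la L_0,D(L_0)\ra$, and show that $\tilde P_t=P_t$ for every $t\ge 0$; since by Definition \ref{def.generator} the generator is determined by the semigroup, this forces $\tilde L=L$, which is exactly the assertion. By Theorem \ref{ts1}, $\la\tilde P_t\ra$ and $\la P_t\ra$ are represented by families of measures $\tilde\mu_t(x,\cdot)$ and $\mu_t(x,\cdot)$ in $M_\kappa(E)$; since $\kappa$ is bounded we have $1\in\ck$, and Markovianity then gives that these are probability measures, i.e.\ $\tilde\mu_t(x,\cdot),\mu_t(x,\cdot)\in M_\kappa^+(E)$ with total mass $1$.

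Fix $x\in E$ and put $\nu_t:=\tilde\mu_t(x,\cdot)$. The core computation is to verify that $\la\nu_t\ra_{t\ge 0}$ solves \eqref{eq:5.5} and lies in the class in which that equation has a unique solution; the same will then hold for $\la\mu_t(x,\cdot)\ra_{t\ge 0}$, and the hypothesis forces $\tilde\mu_t(x,\cdot)=\mu_t(x,\cdot)$ for all $t\ge 0$. For $\vp\in D(L_0)\subset D(\tilde L)$, Proposition \ref{pro_gen1}(d) applied to $\la\tilde P_t\ra$ (using $\tilde L\vp=L_0\vp\in\ck$, so that $s\mapsto\tilde P_sL_0\vp$ is a $\tm$-continuous, hence Riemann integrable, curve in $\ck$) yields
\[
\tilde P_t\vp-\vp=\int_0^t\tilde P_sL_0\vp\,ds.
\]
Applying the $\tm$-continuous linear functional $\delta_x\in M_\kappa(E)$ (Theorem \ref{th_md}), which interchanges with the $\ck$-valued Riemann integral, gives
\[
\int_E\vp\,d\nu_t-\vp(x)=\int_0^t\int_E L_0\vp\,d\nu_s\,ds,
\]
which is precisely \eqref{eq:5.5}; moreover $\nu_0=\delta_x$ since $\tilde P_0=\mathrm{id}$. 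Narrow continuity of $t\mapsto\nu_t$ follows from strong $\tm$-continuity of $\la\tilde P_t\ra$, since then $t\mapsto\int_E\vp\,d\nu_t=\tilde P_t\vp(x)$ is continuous for every $\vp\in\ck$; and $\nu_t(E)=\tilde P_t1(x)=1$. Finally $\tfrac1\kappa\in\ck$ with $\|\tfrac1\kappa\|_\kappa=1$, so the growth bound \eqref{eq:3.2} for $\la\tilde P_t\ra$ gives $\int_E\tfrac1\kappa\,d\nu_t=\tilde P_t\big(\tfrac1\kappa\big)(x)\le Me^{at}/\kappa(x)$, whence $\int_0^T\int_E\tfrac1\kappa\,d\nu_t\,dt<\infty$ for all $T>0$. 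Thus $\la\nu_t\ra$ lies in the uniqueness class, and running the identical argument for $\la P_t\ra$ places $\la\mu_t(x,\cdot)\ra$ there as well. Uniqueness then gives $\tilde\mu_t(x,\cdot)=\mu_t(x,\cdot)$ for all $t\ge 0$; as $x\in E$ is arbitrary and these kernels represent $\tilde P_t$, $P_t$ on all of $\ck$, we conclude $\tilde P_t=P_t$, hence $\tilde L=L$.

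I do not expect a deep obstacle here: the substance of the proof is the conceptual reduction to FPK uniqueness, and the only point requiring genuine care is the verification that the representing kernels of a competing Markov $C_0$-semigroup both solve \eqref{eq:5.5} and lie in the admissible class --- that is, the mass-one identity together with the $\tfrac1\kappa$-integrability bound \eqref{eq:5.7'} --- which is exactly where boundedness of $\kappa$ (ensuring $1,\tfrac1\kappa\in\ck$) and the exponential growth estimate \eqref{eq:3.2} enter. Everything else is a direct application of the representation Theorem \ref{ts1}, the duality $\la\ck,\tm\ra'=M_\kappa(E)$, and the integral identity of Proposition \ref{pro_gen1}(d).
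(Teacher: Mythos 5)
Your proposal is correct and follows essentially the same route as the paper: the paper also takes an arbitrary competing Markov $C_0$-semigroup $\la\tilde P_t\ra$ with generator extending $L_0$, notes via Theorem \ref{ts1} and the identity \eqref{eq:5.6'} (equivalently, Proposition \ref{pro_gen1}(d) evaluated against $\delta_x$) that its representing kernels solve \eqref{eq:5.5} within the stated uniqueness class, and concludes $\tilde\mu_t(x,\cdot)=\mu_t(x,\cdot)$, hence $\tilde P_t=P_t$ and $\tilde L=L$. Your write-up merely fills in the verifications (mass one, narrow continuity, the $1/\kappa$-integrability via the growth bound \eqref{eq:3.2}) that the paper dispatches with ``clearly.''
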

\begin{proof}
Let $( \tilde L,\, D(\tilde L))$ be the infinitesimal generator of a Markov $C_0$-semigroup $(\tilde P_t)_{t\geq0}$ on $\la C_\kappa(E),\,\tm\ra$ such that $L_0\subset\tilde L$.
Let $\tilde\mu_t(x,\cdot),\, x\in E,\, t\geq0$, be its representing measures from Theorem \ref{ts1}.
Clearly, $(\tilde\mu_t(x,\cdot))_{t\geq0}\in C([0,\infty),\,M_\kappa^+(E))$, $\tilde{\mu}_t(x, E) = 1$ for all $t \in [0, \infty)$, and \eqref{eq:5.7'} holds with $\tilde{\mu}_t (x, \cdot)$ replacing $\nu_t$ for all $x\in E$
and by Theorem \ref{ts2} (more precisely, \eqref{eq:5.6'}) it solves \eqref{eq:5.5}.
Hence the assertion follows.
\end{proof}
\noindent Now let us start with an example on a finite-dimensional state space.
In fact, the corresponding SDE on $\R^d$ and the assumptions on the coefficients are the standard ones.
So, in this ``generic case'' our theory of $C_0$-semigroups on $\la C_\kappa(E),\,\tm\ra$ applies
and thus identifies the corresponding Kolmogorov operator $L_0$ with domain $D(L_0) = C_b^2(\R^d)$
as a Markov core operator for the infinitesimal generator of the $C_0$-semigroup on $\la C_\kappa(E),\,\tm\ra$
given by the transition semigroup of the solutions to the SDE.
To the best of our knowledge in this generality this is the first result confirming that
the Kolmogorov operator determines the (truly) infinitesimal generator of the said transition semigroup of the Markov process given by the SDE's solution.
This appears to have been an open problem for many years.

\subsection{Applications to SDEs on $\R^d$}\label{ex:5.9}~\\
Let $E:= \R^d$ and $(\Omega,\mc F,\P)$ be a complete probability space with normal filtration $\mc F_t,\, t\geq0$,
and $(W_t)_{t\geq0}$ be a (standard) $(\mc F_t)$-Wiener process on $\R^{d_1}$.
Let $M(d\times d_1,\, \R)$ denote the set of real $d\times d_1$-matrices equipped with the Hilbert-Schmidt norm $\|\cdot\|$ and let
\begin{align*}
\sigma &\colon \R^d \to M(d\times d_1,\, \R),\\
	 b &\colon \R^d \to \R^d,
\end{align*}
be continuous maps satisfying the following standard assumptions. There exist $K \in L^1_{loc}([0, \infty))$ and $C \in [0, \infty)$ such that for all $R \geq 0$,
\begin{align}\label{eq:5.6}
2\hscalar{x-y}{b(x)-b(y)} + \|\sigma(x)-\sigma(y)\|^2 \leq K(R) |x-y|^2,\quad x,y\in \R^d, \vert x \vert, \vert y \vert \leq R,
\end{align}
and
\begin{align}\label{eq:5.7}
2\hscalar{x}{b(x)} + \|\sigma(x)\|^2 \leq C (1+|x|^2),\quad \text{for all\;} x \in \R^d.
\end{align}
Here $\<\,,\>$ denotes the Euclidean inner product on $\R^d$ and $|\cdot|$ the corresponding norm.
Then it is well-known (see e.g. \cite[Section 3]{LR15} and the references therein) that the SDE
\begin{align}\label{eq:5.8}
dX(t) = b(X(t))dt + \sigma(X(t))dW(t), \quad X(0) = x\in\R^d,
\end{align}
has a unique strong solution $X(t,x),\, t\geq0$, such that for $p \geq 2$ there exists $C_{T, p} \in [0, \infty)$ such that 
\begin{align}\label{eq:5.9}
\E[\sup_{t \in [0, T]}|X(t,x)|^p] \leq C_{T,p}(1+|x|^p),
\end{align}
where $\E$ denotes expectation w.r.t. $\P$. Indeed, \eqref{eq:5.9} is a direct consequence of \eqref{eq:5.7} and It\^o's formula.
For $\geq 1$, let
\begin{align}\label{eq:5.10}
\kappa(x) \coloneqq \la 1+|x|^m\ra^{-1}
\end{align}
and for $\vp\in C_\kappa(\R^d),\, t\geq0,\, x\in\R^d,$
\begin{align}\label{eq:5.10'}
P_t\vp(x) := \E_\P[\vp(X(t,x))] = \int \vp(y) \mu_t(x,dy),
\end{align}
where
\begin{align*}
\mu_t(x,dy)\coloneqq \la\P\circ X(t,x)^{-1}\ra(dy) \in M_\kappa(\R^d)
\end{align*}
(cf.\,\eqref{eq:3.9'}).
By \cite[Proposition 3.2.1]{LR15} exactly the same arguments, which prove Claim 2 in Section 4.1, imply 
that $(P_t)_{t\geq 0}$ is a Markov $C_0$-semigroup on $\la C_\kappa(\R^d),\,\tm\ra$.
Let $\la L,\, D(L)\ra$ be its infinitesimal generator and let us consider the Kolmogorov operator $\la L_0,\, D(L_0)\ra$ corresponding to \eqref{eq:5.8}, defined as
\begin{align}\label{eq:5.10''}
& L_0\vp(x)\coloneqq \frac{1}{2} \sum_{i,j=1}^d \la \sigma(x) \sigma(x)^T \ra_{i,j} \frac{\partial}{\partial x_i}\frac{\partial}{\partial x_j}\vp(x) + \hscalar{b(x)}{\nabla\vp(x)}, \quad x\in\R^d, \\
& \vp \in D(L_0)\coloneqq C_b^2(\R^d).\nonumber
\end{align}
By Corollary \ref{cor_dense} below it easily follows that $C_b^2(\R^d)$ is dense in $\la C_\kappa(\R^d),\,\tm\ra$.
To show that 
\begin{align}\label{eq:5.17!}
L_0 \subset L,
\end{align}
we need one more condition on $b$ and $\sigma$, namely we additionally assume
\begin{align}\label{eq:5.11}
\sup_{x\in\R^d} \frac{|b(x)| + \|\sigma(x)\|}{1+|x|^m} < \infty.
\end{align}
Now let us first show that \eqref{eq:5.17!}.
So, let $\vp\in C_b^2(\R^d)$.
Then by It\^o's formula and \eqref{eq:5.10'} we have for all $x\in\R^d$
\begin{align}\label{eq:5.10'''}
P_t\vp(x) &= \E[\vp(X(t,x))]\\
		  &= \vp(x) + \int_0^t \E[L_0\vp(X(s,x))]\, ds \nonumber\\
		  &= \vp(x) + \int_0^t \int_\R^d L_0\vp(y) \,\mu_s(x,dy)\, ds. \nonumber
\end{align}
Here we note that by \eqref{eq:5.11} for some $c_0\in (0,\infty)$
\begin{align}\label{eq:5.10iv}
|L_0\vp(x)| \leq c_0 (1+|x|^m) \|\vp\|_{C_b^2}\quad \forall x\in\R^d.
\end{align}
So, since $\mu_s(x,dy),\,x\in\R^d,\,s\in[0,\infty)$, satisfy (3) in Theorem \ref{ts1} by \eqref{eq:5.9}, the above use of Fubini's Theorem is justified and $L_0\vp \in C_\kappa(\R^d)$.
\eqref{eq:5.10'''} implies that for all $x\in\R^d$
\begin{align}\label{eq:5.19'}
 \frac{1}{t} \la P_t\vp(x) -\vp(x)\ra = \frac{1}{t} \int_0^t P_s(L_0\vp)(x)\, ds,
\end{align}
So, by the fundamental theorem of calculus
\begin{align}\label{eq:5.21'}
\lim\limits_{t\to 0} \frac{1}{t} \la P_t\vp(x) -\vp(x)\ra = L_0\vp(x).
\end{align}
Since by \eqref{eq:5.9}, for $p=m$, we have
\[ \sup_{t\in[0,T]} P_t\frac{1}{\kappa} \leq 2 C_{T,p} \frac{1}{\kappa}, \]
\eqref{eq:5.10iv} and \eqref{eq:5.19'} imply
\begin{align}\label{eq:5.21''}
\sup_{0\leq t\leq1} \frac{1}{t} \|P_t\vp - \vp\|_\kappa < \infty\,,
\end{align}
and thus Theorem \ref{ts2} implies \eqref{eq:5.17!}.\\

We would like to stress at this point that at least if $\kappa = 1$, the proof of \eqref{eq:5.17!} is completely standard (as it is also in Section \ref{ex:L3} below),
as far as the part \eqref{eq:5.21'} is concerned, while part \eqref{eq:5.21''} is to be taken care of case by case.\\

To show that $(L_0, D(L_0))$ is a Markov core operator for $(L, D(L))$, we furthermore assume that
\begin{align}\label{eq:5.12}
\text{For every compact } K \subset \R^d\text{ there exists }c_K\in(0,\infty)\\
\text{such that for all }\xi=(\xi_1,\dots,\xi_d)\in\R^d \nonumber\\
\sum_{i,j=1}^d \la \sigma(x) \sigma(x)^T \ra_{i,j} \xi_i\xi_j \geq c_K |\xi|^2, \quad  x\in\R^d.\nonumber
\end{align}
\begin{align}\label{eq:5.13}
\text{Each } \la \sigma \sigma^T \ra_{i,j} \text{ is locally in } VMO(\R^d).
\end{align}
We recall that a $\mc B(\R^d)$-measurable function $g: \R^d\to \R$ belongs to the class $VMO(\R^d)$,
if it is bounded and for
\[ O(g,R) \coloneqq \sup_{x\in\R^d} \sup_{r\leq R} |B_r(x)|^{-2} \iint\limits_{y,z\in B_r(x)} |g(y)-g(z)|\;dydz, \quad R\in(0,\infty), \]
we have 
\[ \lim\limits_{R\to0} O(g,R) = 0, \]
where $B_r(x)$ denotes the ball in $\R^d$ of radius $r$, centered at $x\in\R^d$, and $|B_r(x)|$ its Lebesgue measure.
g belongs locally to the class $VMO(\R^d)$ if $\zeta g \in VMO(\R^d)$ for every $\zeta \in C_0^\infty(\R^d)$.\\

Under the assumptions \eqref{eq:5.6}, \eqref{eq:5.7}, \eqref{eq:5.11}, \eqref{eq:5.12}, \eqref{eq:5.13} on the continuous maps $b$ and $\sigma$ above,
it now follows by Proposition \ref{prop:5.9} and Theorem 9.3.6 in \cite{fpke}
that the Kolmogorov operator $\la L_0,\, D(L_0)\ra$ in \eqref{eq:5.10''} corresponding to SDE \eqref{eq:5.8} is a Markov core operator for $\la L,\, D(L)\ra$.
In this case one also says that Markov uniqueness holds for $\la L_0,\, D(L_0)\ra$ on $\la C_\kappa(\R^d),\,\tm\ra$.\\

Now let us give an example on an infinitely dimensional state space,
where we even have that the Kolmogorov operator $\la L_0,\, D(L_0)\ra$ is a core-operator for $\la L,\, D(L)\ra$.
\subsection{Applications to generalized Mehler semigroups (or OU-processes with Levy noise) on Hilbert spaces}\label{ex:Generalized Mehler semigroups on Hilbert spaces}~\\
Let $E$ be a separable Hilbert space with inner product $\langle \cdot \; , \; \cdot \rangle $ and norm $\| \cdot\|_E$
and let us come back to Section \ref{lemma373}, i.e., $(P_t)_{t \geq 0}$ is the semigroup defined in \eqref{eq:3.16},
which as shown there, is a $C_0$-semigroup on $(C_b(E), \tau_{1}^{\mc M})$,  (so $\kappa \equiv 1$).
In order to calculate its infinitesimal generator $(L, D(L))$ explicitly on a core domain, we need some assumptions.
Let $\lambda : E \to \mathbb{C}$ satisfy the following hypothesis:
\begin{enumerate}
\item [$(H1)$] $\lambda$ is negative definite and Sazonov continuous with $\lambda(0) = 0$.
\end{enumerate}
We refer e.g. to \cite[Section 2]{fr} for the corresponding definitions.
Then, as is well-known (cf., e.g., \cite[Theorem VI. 4.10]{partha}), $\lambda$ posesses a unique Levy-Khintchin representation of the form 
\begin{align}\label{eq:5.15'}
\lambda(\xi) := -i \langle \xi, a \rangle + \frac{1}{2} \langle \xi, R\xi \rangle - \int_E \Biggl( e^{i\langle \xi, x \rangle} - 1 - \frac{i\langle \xi, x\rangle}{1 + \|x\|^2_E}\Biggr) M(dx),
\end{align}
where $a \in E, R:E \to E$ a symmetric trace class operator and $M$ a Levy measure on $(E, \mathcal{B}(E))$, i.e. $M(\{0\}) = 0$
and $\int_E \|x\|^2_E \wedge 1 \, M(dx) < \infty$. We note that each $\lambda$ of the form \eqref{eq:5.15'} is automatically Sazonov continuous on $E$.
Obviously, then there exists $D \in [0, \infty)$ such that 
\begin{align}\label{eq:5.15''}
\vert\lambda(\xi)\vert \leq D (1+ \|\xi\|^2_E),\; \xi \in E,\text{ and } \lambda (-\xi) = \overline{\lambda(\xi)}, \quad \xi \in E, 
\end{align}
and the real part of $\lambda$ is non-negative.
Now we shall choose the measures in \eqref{eq:3.16} in the following way.
By \cite[Section 2.1]{fr} the functions 
\begin{align*}
E \ni \xi \mapsto \int_0^t \lambda (T_s^\ast \xi) \;ds,\quad t \geq 0,
\end{align*}
are also negative definite, zero for $\xi = 0$ and Sazonov continuous, where $T_s^\ast$ denotes the adjoint operator of $T_s$ on $E$.
Hence by the Minlos-Sazonov Theorem (see \cite{vakhania}) for each $t \geq 0$ there exists a unique probability measure $\mu_t$ on $(E, \mathcal{B}(E))$ with Fourier transform
\begin{align}\label{eq:5.16}
 \hat{\mu_t}(\xi) := exp \biggl( -\int_0^t \lambda (T_s^\ast \xi)\, ds \biggr) ,\quad \xi \in E.
\end{align}
\eqref{eq:5.16} implies that 
\begin{align*}
\hat{\mu}_{t+s}(\xi) = \hat{\mu_s} (\xi) \hat{\mu_t}(T_s^\ast \xi),\; \xi \in E',\quad t, s \geq 0,
\end{align*}
which in turn is equivalent to \eqref{eq:3.15}.
Hence for such $\mu_t$, $P_t, t \geq 0$ defined as in \eqref{eq:3.16}, form indeed a generalized Mehler semigroup,
hence by Section \ref{sec:norm.top} a Markov $C_0$-semigroup on $\la C_\kappa(E),\tau_1^{\mc M}\ra$.
Now we want to identify its generator $(L, D(L))$ on a convenient and large enough domain which was suggested in \cite{lescot}.
Let us recall its definition.
Let $W_0$ be the set of functions $\varphi$ that have a representation of the form 
\begin{align}\label{eq:5.16'}
\varphi(x) = f \Bigl( \langle \xi_1, x \rangle, \dots,  \langle \xi_m, x\rangle \Bigr),\, x\in E,
\end{align}
for $m \in\N$ and $f \in \mathcal{S} (\R^m, \mathbb{C})$ (i.e., the Schwartz space of complex-valued functions, "rapidly decreasing" at infinity as well as their derivatives).
Obviously, $W_0$ is closed under multiplication.
With the notations above, let $f_0: \R^m \to \mathbb{C}$ denote the inverse Fourier transform of $f$, i.e. the function $f_0$, such that for all $y \in \R^m$
$$f(y) = \int_{\R^m} e^{i\langle y, v \rangle} f_0 (v) dv,$$
and let $\nu(dv) := f_0(v)dv$, where $dv$ denotes Lebesgue measure on $\R^m$. Let $\Pi_m \colon \R^m \to E$ be defined by 
$$\Pi_m (v_1, \dots, v_m) := v_1\xi_1 + \dots + v_m\xi_m,$$
and let $\nu := (\Pi_m)_{\ast}\nu_0$, i.e., the image measure of $\nu_0$ under $\Pi_m$.
Then a simple computation yields that $\varphi = \hat{\nu}$. Let $W$ be the $(\R-)$vector space, generated by the $\R$-valued elements of $W_0$, i.e. those for which 
$$f_0 (- v) = \overline{f_0(v)}, \; v \in \R^m.$$
Let us now recall one of the main results in \cite{lescot}, for which we need to assume the following condition:
\begin{enumerate}
\item[(H2)] There exists an orthonormal basis $\{\xi_n| n \in \N\}$ of $E$, consisting of eigenvectors of the adjoint operator $A^\ast$ of $A$ on E. 
\end{enumerate}
From this it is easy to see that $W$ is an algebra separating the points of $E$. Note also that by definition of the Fr\'{e}chet derivative $\varphi '$ of $\varphi$ it follows that $\varphi'(x) \in D(A^\ast)$ for all $x \in E$. 
\begin{theorem}\label{theorem 5.10} \cite[Theorem 1.1]{lescot}
For all real-valued $\varphi = \hat{\nu} \in W_0$ and any $x \in E$ define 
\begin{align}\label{eq:5.17}
L_0\varphi(x) := \int_{E} \Bigl( i\langle A^{\ast}\xi, x\rangle - \lambda(\xi) \Bigr) e^{i\langle\xi,x\rangle}\nu(d\xi)\quad \text{(Kolmogorov operator)}
\end{align}
and extend $L_0$ by $\R-$linearity to $D(L_0) := W$. Suppose that (H1) and (H2) hold. Then,
\begin{enumerate}
\item[(i)] $L_0$ maps $D(L_0)$ into $C_b (E)$.
\item[(ii)] $P_t \varphi (x) - \varphi(x) = \int_0^t P_s L_0 \varphi (x)\, ds$ for all $\varphi \in D(L_0), x \in E$ and $t \geq 0$. 
\end{enumerate}
\end{theorem}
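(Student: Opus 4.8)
Everything reduces to a finite-dimensional Fourier computation. Fix a real-valued $\varphi=\widehat\nu\in W_0$; by (H2) we may take the $\xi_j$ in its representation from the orthonormal $A^*$-eigenbasis, $A^*\xi_j=\alpha_j\xi_j$, so that $\nu=(\Pi_m)_\ast(f_0\,dv)$ is supported on $V_m:=\mathrm{span}(\xi_1,\dots,\xi_m)$, which is invariant under $A^*$ and under every $T_t^*$ (with $T_t^*\xi_j=e^{\alpha_jt}\xi_j$). Put $P_mx:=(\langle\xi_1,x\rangle,\dots,\langle\xi_m,x\rangle)$. Combining \eqref{eq:3.16} with \eqref{eq:5.16} and Fubini (legitimate as $\mu_t$ is a probability measure and $\nu$ is finite),
\[
P_t\varphi(x)=\int_E e^{i\langle T_t^*\xi,x\rangle}\,\widehat\mu_t(\xi)\,\nu(d\xi),\qquad \widehat\mu_t(\xi)=\exp\Bigl(-\int_0^t\lambda(T_s^*\xi)\,ds\Bigr),
\]
with $|\widehat\mu_t(\xi)|\le1$ since $\mathrm{Re}\,\lambda\ge0$. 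Pushing this forward by $P_m$ gives $P_t\varphi(x)=\mathcal P_t(P_mx)$, where $\mathcal P_t$ is the transition semigroup on $\R^m$ of a finite-dimensional generalized Mehler (Ornstein-Uhlenbeck with L\'evy noise) process with drift matrix $B:=A^*|_{V_m}=\mathrm{diag}(\alpha_1,\dots,\alpha_m)$ and L\'evy symbol $\lambda_m:=\lambda\circ\Pi_m$. (This explains why the finitely based functions $W$ are the right domain.) Since $f_0\in\mathcal S(\R^m)$, $\nu$ has finite moments of all orders, so all integrals below converge absolutely and the dominated/Vitali convergence arguments apply.

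\textbf{Proof of (i).} The integral \eqref{eq:5.17} converges absolutely (integrand $\le(\|B\|\,\|x\|_E+D)(1+\|\xi\|_E^2)$ on $V_m$, by \eqref{eq:5.15''}). Passing to $\R^m$ and splitting, the drift part is
\[
\int_E i\langle A^*\xi,x\rangle\,e^{i\langle\xi,x\rangle}\,\nu(d\xi)=\sum_{j=1}^m\alpha_j\,\langle\xi_j,x\rangle\,(\partial_jf)(P_mx),
\]
where the $A^*$-invariance of $V_m$ is precisely what lets us rewrite $A^*\xi$ through the $\xi_j$; since $y_j(\partial_jf)(y)$ is bounded ($f\in\mathcal S(\R^m)$), this lies in $C_b(E)$. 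The remaining term is $-\int_E\lambda(\xi)e^{i\langle\xi,x\rangle}\nu(d\xi)=-\widehat{\lambda_mf_0}(P_mx)$, the Fourier transform of $\lambda_mf_0\in L^1(\R^m)$ (indeed $|\lambda_mf_0(v)|\le D(1+\|\Pi_m\|^2|v|^2)|f_0(v)|\in L^1$), hence bounded and continuous. Thus $L_0\varphi\in C_b(E)$; extending by $\R$-linearity gives (i). Note that without $A^*$-invariance the unbounded linear factor $\langle A^*\xi,x\rangle$ would not be absorbed by the Schwartz decay, and $L_0\varphi$ would fail to be bounded -- this is why (H2) is needed.

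\textbf{Proof of (ii).} Fix $x\in E$. Differentiating under the integral in $P_t\varphi(x)$ -- justified on $[0,T]$ by the domination $c_{x,T}(1+\|\xi\|_E^2)$ of the $t$-derivative of the integrand, using $\tfrac{d}{dt}T_t^*\xi=A^*T_t^*\xi$ ($\xi\in D(A^*)$) and $\tfrac{d}{dt}\widehat\mu_t(\xi)=-\lambda(T_t^*\xi)\widehat\mu_t(\xi)$ -- shows $t\mapsto P_t\varphi(x)$ is $C^1$ with
\[
\tfrac{d}{dt}P_t\varphi(x)=\int_E\bigl(i\langle A^*T_t^*\xi,x\rangle-\lambda(T_t^*\xi)\bigr)e^{i\langle T_t^*\xi,x\rangle}\widehat\mu_t(\xi)\,\nu(d\xi),
\]
which at $t=0$ is $L_0\varphi(x)$. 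I then claim $\tfrac{d}{dt}P_t\varphi(x)=P_t(L_0\varphi)(x)$ for all $t\ge0$. By (i), $L_0\varphi=\widehat{\widetilde\nu}$ for a \emph{finite} complex measure $\widetilde\nu=(\Pi_m)_\ast(\theta\,dv)$ on $V_m$, with $\theta:=-\lambda_mf_0-\sum_j\alpha_j\,\partial_j(v_jf_0)\in L^1(\R^m)$; so Fubini gives $P_t(L_0\varphi)(x)=\int_E e^{i\langle T_t^*\eta,x\rangle}\widehat\mu_t(\eta)\,\widetilde\nu(d\eta)$, and in $\R^m$-coordinates (with $Q_t:=\mathrm{diag}(e^{\alpha_jt})$, $w:=P_mx$) the claim becomes
\[
\int_{\R^m}e^{i\langle Q_tv,w\rangle}\widehat\mu_t(\Pi_mv)\,\theta(v)\,dv=\frac{d}{dt}\int_{\R^m}e^{i\langle Q_tv,w\rangle}\widehat\mu_t(\Pi_mv)\,f_0(v)\,dv.
\]
Changing variables $u=Q_tv$ in both integrals, this reads $\widehat{g_t}=\tfrac{d}{dt}\widehat{k_t}$ with $k_t(u)=\widehat\mu_t(\Pi_mQ_t^{-1}u)f_0(Q_t^{-1}u)\det Q_t^{-1}$ and $g_t(u)=\widehat\mu_t(\Pi_mQ_t^{-1}u)\theta(Q_t^{-1}u)\det Q_t^{-1}$. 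Now the $t$-dependence of $\widehat\mu_t(\Pi_mQ_t^{-1}u)=\exp(-\int_{-t}^0\lambda(\Pi_mQ_\rho u)\,d\rho)$ sits only in the lower limit, so its $t$-derivative is $-\lambda(\Pi_mQ_t^{-1}u)\widehat\mu_t(\Pi_mQ_t^{-1}u)$ -- \emph{$\lambda$ is never differentiated} -- and a direct product-rule computation (together with $\tfrac{d}{dt}[f_0(Q_t^{-1}u)\det Q_t^{-1}]=-\det Q_t^{-1}\sum_j\alpha_j\,\partial_j(v_jf_0)(Q_t^{-1}u)$) gives $\tfrac{d}{dt}k_t=g_t$ pointwise, precisely because of the chosen $\theta$; the exchange $\tfrac{d}{dt}\widehat{k_t}=\widehat{\tfrac{d}{dt}k_t}$ holds by Vitali's theorem, as $\{\tfrac{d}{dt}k_t\}$ is uniformly integrable (clear after undoing the change of variables, using $\mathrm{Re}\,\lambda\ge0$ and $\theta\in L^1$). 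Finally $s\mapsto P_s(L_0\varphi)(x)=\tfrac{d}{ds}P_s\varphi(x)$ is continuous, since $L_0\varphi\in C_b(E)$ and $(s,y)\mapsto P_sg(y)$ is jointly continuous for $g\in C_b(E)$ (as shown in Section \ref{sec:norm.top}); the fundamental theorem of calculus then yields $P_t\varphi(x)-\varphi(x)=\int_0^tP_s(L_0\varphi)(x)\,ds$, and $\R$-linearity extends it to $W$.

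\textbf{Main obstacle.} The only genuinely delicate point is the identity $\tfrac{d}{dt}P_t\varphi=P_t(L_0\varphi)$: since $\lambda$ is merely negative definite (not differentiable), one cannot blindly differentiate under all the integrals, and the computation must be organised -- reparametrising the ``clock'' in $\widehat\mu_t$ and moving every $v$-derivative onto the Schwartz density $f_0$ -- so that derivatives only ever land on $f_0$ and on the exponentials. Everything else (absolute convergence, the Fubini and dominated/Vitali exchanges) is bookkeeping forced by $f_0\in\mathcal S(\R^m)$, $|\widehat\mu_t|\le1$, $\mathrm{Re}\,\lambda\ge0$, and $|\lambda(\xi)|\le D(1+\|\xi\|_E^2)$. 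As a by-product, once (i)--(ii) hold, Theorem \ref{ts2} together with the sub-Markovianity of $(P_t)_{t\ge0}$ (which makes \eqref{a3} automatic from (ii)) shows $W\subset D(L)$ with $L|_W=L_0$ -- the input needed to treat $(L_0,W)$ as a core operator for $(L,D(L))$.
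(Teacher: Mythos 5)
The paper does not prove this statement: it is quoted verbatim from \cite[Theorem 1.1]{lescot}, so there is no internal proof to compare against. Your argument is, as far as I can check, correct, and it essentially reconstructs the strategy of the cited source: reduce to the finite-dimensional span $V_m$ of eigenvectors of $A^\ast$ (which is what makes $\langle A^\ast\xi,x\rangle$ well defined $\nu$-a.e.\ and lets the unbounded factor be absorbed into the Schwartz decay of $f_0$), use the Fourier representation $P_t\varphi(x)=\int e^{i\langle T_t^\ast\xi,x\rangle}\widehat{\mu_t}(\xi)\,\nu(d\xi)$, and organise the $t$-differentiation so that $\lambda$ is only ever evaluated, never differentiated — the reparametrisation $\widehat{\mu_t}(\Pi_mQ_t^{-1}u)=\exp\bigl(-\int_{-t}^0\lambda(\Pi_mQ_\rho u)\,d\rho\bigr)$ and the integration by parts producing $\theta=-\lambda_m f_0-\sum_j\alpha_j\partial_j(v_jf_0)$ are exactly the right devices, and I verified that the product rule does yield $\tfrac{d}{dt}k_t=g_t$ pointwise. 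Two small points you should tighten. First, for (i) you should also record that $L_0\varphi$ is \emph{real}-valued: this follows from $f_0(-v)=\overline{f_0(v)}$ together with $\lambda(-\xi)=\overline{\lambda(\xi)}$ (see \eqref{eq:5.15''}), which gives $\theta(-v)=\overline{\theta(v)}$; without this remark $L_0\varphi\in C_b(E)$ is not literally established. Second, the Vitali/uniform-integrability justification of $\tfrac{d}{dt}\widehat{k_t}=\widehat{\tfrac{d}{dt}k_t}$ is stated rather loosely; the clean route is to write $\widehat{k_{t+h}}(w)-\widehat{k_t}(w)=\int_{\R^m}e^{i\langle u,w\rangle}\int_t^{t+h}g_s(u)\,ds\,du$, apply Fubini using $\sup_s\|g_s\|_{L^1}\leq\|\theta\|_{L^1}$ (visible after undoing the change of variables, since $|\widehat{\mu_s}|\leq1$), and then use the continuity of $s\mapsto\widehat{g_s}(w)$ by dominated convergence. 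With these repairs the proof is complete and self-contained.
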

From this result if follows easily that for $(L_0, D(L_0))$ we have $L_0 \subset L$.
Indeed, by Theorem \ref{theorem 5.10} and its consequence that $s \mapsto P_s L_0 \varphi(x)$ is continuous on $[0, \infty)$ for all $\varphi \in D(L_0), x \in E$, we have that 
\begin{align*}
\frac{d}{dt}_{\upharpoonright t=0} P_t \varphi (x) = L_0 \varphi (x)
\end{align*} 
and for all $t \in [0, 1]$
\begin{align*}
\frac{1}{t} \big|P_t \varphi(x)-\varphi(x)\big| \leq  \sup_{0 \leq s \leq 1} \|P_s L_0 \varphi\|_1
\end{align*}
(where we recall that $\kappa \equiv 1$).
Hence Theorem \ref{ts2} implies that $D(L_0) \subset D(L)$ and $L_0 \varphi = L \varphi$ for all $\varphi \in D(L_0)$, i.e., 
\begin{align}\label{(5.17)'}
L_0 \subset L.
\end{align}
Now we shall prove that $(L_0, D(L_0))$ is a core operator for $(L, D(L))$.
For this, according to Proposition \ref{pr_core} it suffices to prove
\begin{align}\label{5.18}
P_t (D(L_0)) \subset \overline{D(L_0)}^L, \quad t > 0.
\end{align}
\begin{remark}\label{rem:5.12'}
If $\lambda$, restricted to $span\{\xi_1, \cdots, \xi_n\}$, is infinitely often differentiable for all $n \in \N$, then by \cite[Theorem 1.3(i)]{lescot}
\begin{align*}
P_t (D(L_0) \subset D(L_0), \quad t > 0.
\end{align*}
Hence \eqref{5.18} holds. But this is in general not true for general $\lambda$ as above.
\end{remark}
So, to prove \eqref{5.18} let us fix $t > 0$ and $\varphi \in D(L_0)$.
Because $P_t$ is linear, we may assume that $\varphi$ is of type \eqref{eq:5.16'} with $f$ real-valued.
It is easily seen that such a $\varphi$ can be approximated in the $\tm$-graph topology of $L$ on $D(L)$ by $\varphi_n,
n \in \N$, of type \eqref{eq:5.16'} with the corresponding $f_n \in S(\R^m, \mathbb{C})$ being Fourier transforms of $f_{n, 0} \in S(\R^m, \mathbb{C})$ with compact supports.
Hence we may assume that $\varphi$ is of the form \eqref{eq:5.16'} with compactly supported $f_0$. Consider the following approximation of $\lambda$ (see \eqref{eq:5.15'}) for $\epsilon \in (0, 1)$
\begin{align}\label{eq:5.29'tilde}
\lambda_{\varepsilon}(\xi) := - i\langle \xi, a \rangle + \frac{1}{2} \langle \xi, R\xi \rangle - \int_E \Big( e^{i \langle \xi, x \rangle} - 1 - \frac{i\langle \xi, x \rangle}{1 + \|x\|^2_E} \Big) M_\varepsilon(dx),
\end{align}
where
\begin{align*}
M_\varepsilon (dx) := \mathbbm{1}_{\{\varepsilon\; \leq \;\|\cdot\|_E \;\leq \;\frac{1}{\varepsilon}\}} M(dx).
\end{align*}
Obviously, $M_\epsilon$ is again a L\'{e}vy measure on $(E, \mathcal{B}(E))$, and $\lambda_\varepsilon$ satisfies (H1).
Let $\mu^{(\varepsilon)}_t,\, t \geq 0$, be defined analogously to $\mu_t, \, t \geq 0$,
through \eqref{eq:5.16} with $\lambda_\varepsilon$ replacing $\lambda$, and $P_t^{(\varepsilon)}, \, t \geq 0$,
correspondingly through \eqref{eq:3.16} with $\mu_t^{(\varepsilon)}$ replacing $\mu_t$.
Let $(L^{(\varepsilon)}, D(L^{\varepsilon}))$ be the infinitesimal generator of the $C_0-$semigroup $(P_t^{(\varepsilon)})_{t \geq 0}$ on $(C_b(E), \tau_{1}^{\mc M})$.
Since by \cite[Proposition 3.3]{lescot}, each $\lambda_\varepsilon$ fulfills the condition of Remark \ref{rem:5.12'}, the latter implies 
\begin{align}\label{eq:(5.18)'}
P_t^{(\varepsilon)}\varphi \in D(L_0), \varepsilon \in (0, 1).
\end{align}
Hence, if we can prove 
\begin{align}\label{eq:(5.19)}
P_t^{(\varepsilon)}\varphi \;\; \xrightarrow{\varepsilon \to 0}{} P_t \varphi ,
\end{align}
in the $\tau_1^{\mc M}$-graph topology of $L$ on $D(L)$, we obtain that $P_t \varphi \in \overline{D(L_0)}^L$ and \eqref{5.18} is proved.
\eqref{eq:(5.19)} follows from the following two claims, under the additional condition \eqref{eq:5.31'} in Claim 1,
which is, however, is always fulfilled, if $\lambda$ is real-valued (see Lemma \ref{lem:5.12''} below).
\begin{claim}{1}\label{claim1}
Assume that
\begin{align}\label{eq:5.31'}
\{\mu_t^{(\varepsilon)} | \varepsilon \in (0, 1)\}
\end{align}
is tight.\\
Let $g \in C_b(E)$. Then
\begin{align*}
\tau_{1}^{\mc M} - \lim_{\varepsilon \to 0} P_t^{(\varepsilon)} g = P_t g.
\end{align*}
\end{claim}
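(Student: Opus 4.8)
The plan is to reduce the claimed convergence in $\tau_1^{\mc M}$ to uniform convergence on compact subsets of $E$, then to prove narrow convergence $\mu_t^{(\varepsilon)}\to\mu_t$ of the representing measures, and finally to upgrade the latter to the required uniformity by exploiting the tightness hypothesis \eqref{eq:5.31'}. Since $\mu_t^{(\varepsilon)}$ and $\mu_t$ are probability measures, the set $\{P_t^{(\varepsilon)}g:\varepsilon\in(0,1)\}\cup\{P_tg\}$ lies in the $\|\cdot\|_\infty$-ball of radius $\|g\|_\infty$ in $C_b(E)$, and the mixed topology restricted to $\|\cdot\|_\infty$-bounded subsets of $C_b(E)$ coincides with the topology of uniform convergence on compacta (Proposition \ref{t14}); hence the assertion is equivalent to
\[
\lim_{\varepsilon\to0}\ \sup_{x\in K}\bigl|P_t^{(\varepsilon)}g(x)-P_tg(x)\bigr|=0\qquad\text{for every compact }K\subset E ,
\]
and it suffices to bound $\limsup_{\varepsilon\to0}\sup_{x\in K}|P_t^{(\varepsilon)}g(x)-P_tg(x)|$ by $C'\delta$ for every $\delta>0$, with $C'$ depending only on $\|g\|_\infty$.

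Next I would establish that $\mu_t^{(\varepsilon)}\to\mu_t$ narrowly as $\varepsilon\to0$. By \eqref{eq:5.29'tilde}, $\lambda_\varepsilon(\eta)-\lambda(\eta)$ is the integral of $e^{i\langle\eta,x\rangle}-1-\tfrac{i\langle\eta,x\rangle}{1+\|x\|_E^2}$ against $M-M_\varepsilon=\mathbbm{1}_{\{\|\cdot\|_E<\varepsilon\}\cup\{\|\cdot\|_E>1/\varepsilon\}}\,M$; the integrand is dominated by a constant (polynomial in $\|\eta\|_E$) multiple of $\|x\|_E^2\wedge1\in L^1(M)$ and $M(\{0\})=0$, so dominated convergence gives $\lambda_\varepsilon(\eta)\to\lambda(\eta)$ for every $\eta\in E$, while $M_\varepsilon\le M$ gives $|\lambda_\varepsilon(\eta)|\le D(1+\|\eta\|_E^2)$ for all $\varepsilon$ with the constant $D$ of \eqref{eq:5.15''}. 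As $E$ is a separable Hilbert space, hence reflexive, $(T_s^\ast)_{s\ge0}$ is a $C_0$-semigroup, so $s\mapsto T_s^\ast\xi$ is norm-continuous and $\sup_{s\le t}\|T_s^\ast\|<\infty$; another application of dominated convergence then yields $\int_0^t\lambda_\varepsilon(T_s^\ast\xi)\,ds\to\int_0^t\lambda(T_s^\ast\xi)\,ds$, whence $\widehat{\mu_t^{(\varepsilon)}}(\xi)\to\widehat{\mu_t}(\xi)$ for all $\xi\in E$ by \eqref{eq:5.16}. Now pick any sequence $\varepsilon_n\downarrow0$: by \eqref{eq:5.31'} and Prohorov's theorem (recall that $E$ is Polish, cf.\ Remark \ref{remproho}) the sequence $(\mu_t^{(\varepsilon_n)})$ is narrowly relatively compact, and any narrow limit point $\nu$ satisfies $\widehat\nu=\lim_n\widehat{\mu_t^{(\varepsilon_n)}}=\widehat{\mu_t}$, hence $\nu=\mu_t$ by uniqueness of Fourier transforms on $E$; thus $\mu_t^{(\varepsilon_n)}\to\mu_t$ narrowly, and since the sequence was arbitrary, $\mu_t^{(\varepsilon)}\to\mu_t$ narrowly as $\varepsilon\to0$.

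For the last step, write $h(x,y):=g(T_tx+y)$, which is continuous on $E\times E$, so that $P_t^{(\varepsilon)}g(x)-P_tg(x)=\int_E h(x,y)\,(\mu_t^{(\varepsilon)}-\mu_t)(dy)$ by \eqref{eq:3.16}. Fix a compact $K\subset E$ and $\delta>0$. By \eqref{eq:5.31'} together with the tightness of the single measure $\mu_t$, choose a compact $C\subset E$ with $\sup_{\varepsilon\in(0,1)}\mu_t^{(\varepsilon)}(E\setminus C)<\delta$ and $\mu_t(E\setminus C)<\delta$. On the compact metric set $K\times C$ the function $h$ is uniformly continuous, so the image of the continuous map $K\ni x\mapsto h(x,\cdot)|_C\in C(C)$ is totally bounded; choose $x_1,\dots,x_N\in K$ such that every $x\in K$ admits an index $i$ with $\sup_{y\in C}|h(x,y)-h(x_i,y)|<\delta$. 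Splitting $\int_E h(x,y)\,(\mu_t^{(\varepsilon)}-\mu_t)(dy)$ into its parts over $C$ and over $E\setminus C$, replacing $h(x,\cdot)$ by $h(x_i,\cdot)$ on $C$, and re-absorbing the tails over $E\setminus C$, one arrives at
\[
\sup_{x\in K}\bigl|P_t^{(\varepsilon)}g(x)-P_tg(x)\bigr|\le\max_{1\le i\le N}\bigl|P_t^{(\varepsilon)}g(x_i)-P_tg(x_i)\bigr|+C'\delta ,
\]
with $C'$ depending only on $\|g\|_\infty$. Since each $h(x_i,\cdot)\in C_b(E)$, the narrow convergence established above makes the first term on the right tend to $0$ as $\varepsilon\to0$; letting $\varepsilon\to0$ and then $\delta\to0$ completes the proof.

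The step I expect to be the main obstacle is this last one: narrow convergence of $\mu_t^{(\varepsilon)}$ yields convergence of $P_t^{(\varepsilon)}g$ only separately at each $x$, and turning this into uniformity over a compact subset of the infinite-dimensional space $E$ is precisely where the tightness assumption \eqref{eq:5.31'} is genuinely needed — it is also what makes possible the passage from pointwise convergence of the Fourier transforms to bona fide narrow convergence in the previous step, since L\'evy's continuity theorem is unavailable in infinite dimensions without such a compactness input.
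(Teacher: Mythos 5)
Your proof is correct, but it takes a genuinely different route from the paper's. The paper approximates $g$ in $\tau_1^{\mc M}$ by finitely based test functions $\psi\in D(L_0)$, invokes the external result \cite[Corollary 3.5]{lescot} that $\|P_t^{(\varepsilon)}\psi-P_t\psi\|_1\to0$ for such $\psi$, and then transfers this to general $g$ via an equicontinuity estimate for the family $\{P_t^{(\varepsilon)}\}_{\varepsilon\in[0,1)}$ that is uniform in $\varepsilon$ --- this uniformity being exactly where the tightness hypothesis \eqref{eq:5.31'} enters. You instead work directly at the level of the measures: you prove $\lambda_\varepsilon\to\lambda$ pointwise by dominated convergence against the L\'evy measure, deduce $\widehat{\mu_t^{(\varepsilon)}}\to\widehat{\mu_t}$ pointwise, and upgrade this to narrow convergence $\mu_t^{(\varepsilon)}\to\mu_t$ via Prohorov's theorem (using \eqref{eq:5.31'}) and uniqueness of Fourier transforms --- correctly noting that L\'evy's continuity theorem is unavailable in infinite dimensions without a compactness input; you then obtain uniformity over a compact $K\subset E$ by a finite $\delta$-net in $C(C)$ for $\{g(T_tx+\cdot)|_C:x\in K\}$ combined with the uniform tail bound $\sup_\varepsilon\mu_t^{(\varepsilon)}(E\setminus C)<\delta$. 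Your argument is more self-contained (it does not use \cite[Corollary 3.5]{lescot} at all, only the explicit form \eqref{eq:5.16} of the Fourier transforms) and isolates the probabilistic content (narrow convergence of $\mu_t^{(\varepsilon)}$) cleanly; the paper's argument is shorter given the Lescot--R\"ockner machinery already in play and, via Remark \ref{remark 3.3''}, produces the $\varepsilon$-uniform seminorm estimate that is reused verbatim in the surrounding core-operator argument. Two minor points: the reduction to uniform convergence on compacts is most directly justified by Proposition \ref{t12}(a) (coincidence of $\tau_1^{\mc M}$ with $\tau_1^{\mc C}$ on norm-bounded sets) rather than the sequential statement of Proposition \ref{t14}, though passing to sequences $\varepsilon_n\downarrow0$ makes the latter applicable too; and the uniform-in-$\varepsilon$ quadratic bound $|\lambda_\varepsilon(\xi)|\le D(1+\|\xi\|_E^2)$, which you correctly get from $M_\varepsilon\le M$, deserves the one extra line you gave it, since it is what licenses the dominated convergence in the $ds$-integral.
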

\begin{claim}{2}\label{claim2}
\begin{align*}
\tau_{1}^{\mc M} - \lim_{\varepsilon \to 0} L_0 P_t^{(\varepsilon)} \varphi = LP_t \varphi.
\end{align*}
\end{claim}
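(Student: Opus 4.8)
The plan is to route everything through two truncated Kolmogorov operators together with a single algebraic identity. For $\varepsilon\in(0,1)$ let $(L_0^{(\varepsilon)},W)$ be the Kolmogorov operator defined exactly as in \eqref{eq:5.17}, but with $\lambda_\varepsilon$ (see \eqref{eq:5.29'tilde}) in place of $\lambda$ and extended by $\R$-linearity to the same domain $W$; since $\lambda_\varepsilon$ satisfies (H1), the reasoning that established \eqref{(5.17)'} applies verbatim and gives $L_0^{(\varepsilon)}\subset L^{(\varepsilon)}$. Because $\varphi\in W=D(L_0^{(\varepsilon)})\subset D(L^{(\varepsilon)})$ and, by \eqref{eq:(5.18)'}, $P_t^{(\varepsilon)}\varphi\in W$, Proposition \ref{pro_gen1}(c) yields $L^{(\varepsilon)}P_t^{(\varepsilon)}\varphi=P_t^{(\varepsilon)}L^{(\varepsilon)}\varphi$, and using $L_0^{(\varepsilon)}\subset L^{(\varepsilon)}$ on both sides this reads $L_0^{(\varepsilon)}P_t^{(\varepsilon)}\varphi=P_t^{(\varepsilon)}L_0^{(\varepsilon)}\varphi$. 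Hence
\begin{align}\label{eq:claim2split}
L_0P_t^{(\varepsilon)}\varphi=\big(L_0-L_0^{(\varepsilon)}\big)\big(P_t^{(\varepsilon)}\varphi\big)+P_t^{(\varepsilon)}\big(L_0^{(\varepsilon)}\varphi\big),
\end{align}
so it suffices to show that the first term on the right tends to $0$ and the second to $LP_t\varphi$, both in $\tau_1^{\mc M}$. The target of the second term is harmless: since $\varphi\in D(L_0)\subset D(L)$, Proposition \ref{pro_gen1}(c) gives $LP_t\varphi=P_tL\varphi=P_tL_0\varphi$.

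Next I would record two uniform-in-$\varepsilon$ facts about $\lambda_\varepsilon$. First, from the elementary bounds $|e^{i\langle\xi,x\rangle}-1-\tfrac{i\langle\xi,x\rangle}{1+\|x\|_E^2}|\le c\,\|\xi\|_E^2\|x\|_E^2$ for $\|x\|_E\le 1$ and $\le 2+\|\xi\|_E$ for $\|x\|_E>1$, together with $M(\{0\})=0$ and $\int_E(\|x\|_E^2\wedge 1)\,M(dx)<\infty$, one sees that $\lambda_\varepsilon\to\lambda$ as $\varepsilon\to 0$ uniformly on every bounded subset of $E$ (the difference being $-\int_{\{\|x\|_E<\varepsilon\}\cup\{\|x\|_E>1/\varepsilon\}}(e^{i\langle\xi,x\rangle}-1-\tfrac{i\langle\xi,x\rangle}{1+\|x\|_E^2})\,M(dx)$). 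Second, since $M_\varepsilon\le M$ is again a Lévy measure, $\lambda_\varepsilon$ is again of the form \eqref{eq:5.15'}, so $\mathrm{Re}\,\lambda_\varepsilon\ge 0$; hence $|\widehat{\mu_t^{(\varepsilon)}}(\eta)|=\exp(-\int_0^t\mathrm{Re}\,\lambda_\varepsilon(T_s^\ast\eta)\,ds)\le 1$ for all $\eta\in E$.

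For the first term of \eqref{eq:claim2split} I would exploit the Fourier representation behind \eqref{eq:(5.18)'}. Recall that above \eqref{eq:5.29'tilde} the function $\varphi$ was reduced to $\varphi=\widehat\nu$ with $\nu=(\Pi_m)_\ast\nu_0$ for a compactly supported smooth density $\nu_0$; a direct computation from \eqref{eq:3.16} and \eqref{eq:5.16} then gives
\[
P_t^{(\varepsilon)}\varphi(x)=\int_{E'}e^{i\langle T_t^\ast\xi,\,x\rangle}\exp\Big(-\int_0^t\lambda_\varepsilon(T_s^\ast\xi)\,ds\Big)\,\nu(d\xi)=\widehat{\sigma^{(\varepsilon)}}(x),
\]
where $\sigma^{(\varepsilon)}$ is the image under $T_t^\ast$ of the measure $\exp(-\int_0^t\lambda_\varepsilon(T_s^\ast\cdot)\,ds)\,\nu$. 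Two features matter, and both come from the previous paragraph: by (H2), $T_t^\ast$ maps the finite-dimensional span of $\xi_1,\dots,\xi_m$ into itself, so $\sigma^{(\varepsilon)}$ is supported on the fixed compact set $K:=T_t^\ast(\mathrm{supp}\,\nu)\subset E'$, independent of $\varepsilon$; and $\|\sigma^{(\varepsilon)}\|_{TV}\le\|\nu\|_{TV}$ because $\mathrm{Re}\,\lambda_\varepsilon\ge 0$. Inserting $P_t^{(\varepsilon)}\varphi=\widehat{\sigma^{(\varepsilon)}}$ (which, by \eqref{eq:(5.18)'}, is again of the form \eqref{eq:5.16'}) into \eqref{eq:5.17} for both $L_0$ and $L_0^{(\varepsilon)}$, the first-order terms cancel and
\[
\Big\|\big(L_0-L_0^{(\varepsilon)}\big)\big(P_t^{(\varepsilon)}\varphi\big)\Big\|_\infty=\sup_{x\in E}\Big|\int_{E'}\big(\lambda_\varepsilon(\xi)-\lambda(\xi)\big)e^{i\langle\xi,x\rangle}\,\sigma^{(\varepsilon)}(d\xi)\Big|\le\|\nu\|_{TV}\sup_{\xi\in K}|\lambda_\varepsilon(\xi)-\lambda(\xi)|,
\]
which tends to $0$ as $\varepsilon\to 0$ by the first fact; since $\|\cdot\|_\infty$-convergence implies $\tau_1^{\mc M}$-convergence (because $\tau_1^{\mc M}\subset\tau_1^{\mc U}$), the first term of \eqref{eq:claim2split} tends to $0$ in $\tau_1^{\mc M}$.

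For the second term I would split $P_t^{(\varepsilon)}L_0^{(\varepsilon)}\varphi-P_tL_0\varphi=P_t^{(\varepsilon)}\big(L_0^{(\varepsilon)}\varphi-L_0\varphi\big)+\big(P_t^{(\varepsilon)}L_0\varphi-P_tL_0\varphi\big)$. Since $P_t^{(\varepsilon)}g(x)=\int_E g(T_tx+y)\,\mu_t^{(\varepsilon)}(dy)$ with $\mu_t^{(\varepsilon)}$ a probability measure (cf.\ \eqref{eq:3.16}), each $P_t^{(\varepsilon)}$ is a $\|\cdot\|_\infty$-contraction, so $\|P_t^{(\varepsilon)}(L_0^{(\varepsilon)}\varphi-L_0\varphi)\|_\infty\le\|L_0^{(\varepsilon)}\varphi-L_0\varphi\|_\infty\le\|\nu\|_{TV}\sup_{\xi\in\mathrm{supp}\,\nu}|\lambda_\varepsilon(\xi)-\lambda(\xi)|\to 0$, and the first summand tends to $0$ in $\|\cdot\|_\infty$, hence in $\tau_1^{\mc M}$. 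For the second summand, $L_0\varphi\in C_b(E)$ by Theorem \ref{theorem 5.10}(i), so Claim 1 (whose standing tightness hypothesis \eqref{eq:5.31'} we carry along) gives $P_t^{(\varepsilon)}L_0\varphi\to P_tL_0\varphi$ in $\tau_1^{\mc M}$. Combining these with \eqref{eq:claim2split} and the previous paragraph yields $L_0P_t^{(\varepsilon)}\varphi\to LP_t\varphi$ in $\tau_1^{\mc M}$, as claimed. I expect the main obstacle to be the third step: justifying the representation $P_t^{(\varepsilon)}\varphi=\widehat{\sigma^{(\varepsilon)}}$ and, especially, the $\varepsilon$-independence of the compact support $K$ together with the uniform total-variation bound, since this is precisely where (H2), the explicit generalized-Mehler structure \eqref{eq:3.16}--\eqref{eq:5.16}, and the sign $\mathrm{Re}\,\lambda_\varepsilon\ge 0$ must be combined; once that representation is in hand, the remaining estimates are routine.
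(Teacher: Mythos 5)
Your proposal is correct and follows essentially the same route as the paper: the identity \eqref{eq:claim2split} is, after using $L_0^{(\varepsilon)}\subset L^{(\varepsilon)}$ and $P_t^{(\varepsilon)}\varphi\in W$, exactly the paper's three-term decomposition \eqref{eq:5.22}, and each piece is treated with the same ingredients (Claim 1, the sup-norm contraction of $P_t^{(\varepsilon)}$, uniform convergence of $\lambda_\varepsilon$ to $\lambda$ on bounded sets, and the Fourier/image-measure representation of $P_t^{(\varepsilon)}\varphi$ with an $\varepsilon$-independent compact support and uniform mass bound coming from $\mathrm{Re}\,\lambda_\varepsilon\geq 0$). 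The only cosmetic differences are that you prove the uniform convergence of $\lambda_\varepsilon$ directly instead of citing \cite[Lemma 3.1]{lescot}, and you phrase the uniform bound via total variation rather than the explicit density estimate \eqref{eq:5.25}.
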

\begin{claimproof}{1}\label{proof:claim1}
By \cite[Corollary 3.5]{lescot} for $\psi \in D(L_0)$ we have
\begin{align}\label{eq:5.20}
\lim_{\varepsilon \to 0} \|P_t^{(\varepsilon)} \psi - P_t \psi\|_1 = 0.
\end{align}
Let $p_{1, (C_n),(a_n)}$ be any of the seminorms generating $\tau_1^{\mc M}$ on $C_b(E)$.
Then there exists a seminorm $p_{1, (K_n),(b_n)}$ and $C \in (0, \infty)$ such that 
\begin{align*}
p_{1, (C_n),(a_n)}( P_t^{(\varepsilon)} \psi ) \leq C \; p_{1, (K_n),(b_n)}, ( \psi )
\end{align*}
for all $\psi \in D(L_0)$ and all $\varepsilon \in [0, 1)$, where we set $P_t^{(0)}:= P_t$. 
The fact that the seminorm $p_{1, (C_n),(a_n)}$ can indeed be taken independent of $\varepsilon \in [0, 1)$ is due to assumption \eqref{eq:5.31'}.
This can be seen as follows: \\
Consider the representing measures $\mu_t^{(\varepsilon)}(x, dy),\, x \in E,\, t \geq 0$, of $P_t^{(\varepsilon)},\, t \geq 0,\, \varepsilon \in [0, 1)$,
which are given by (see \eqref{eq:3.17})
\begin{align}\label{eq:5.32'}
\mu_t^{(\varepsilon)}(x, dy) := (\delta_{T_tx}\ast \mu_t^{(\varepsilon)})(dy).
\end{align}
But by \eqref{eq:5.31'} for every $\delta > 0$ there exists a compact $K_\delta \subset E$ such that 
\begin{align*}
\mu_t^{(\varepsilon)}(K_\delta^c) < \delta \quad \text{ for all \;} \varepsilon \in [0, 1). 
\end{align*}
Let $C \subset E$ be compact. Define
\begin{align*}
\tilde{K_\delta} := K_\delta + T_t C.
\end{align*}
Then $\tilde{K_\delta}$ is a compact subset of $E$ and 
\begin{align*}
(K_\delta + T_t C - T_t x)^c \subset K_\delta^c \quad x \in C. 
\end{align*}
Hence, since $(K_\delta + T_t C)^c - T_tx \subset (K_\delta + T_t C - T_tx)^c$, we have for all $x \in C$
\begin{align*}
(\delta_{T_tx} \ast \mu_t^{(\varepsilon)})(\tilde{K_\delta^c}) &= \mu_t^{(\varepsilon)}((K_\delta + T_tC)^c - T_tx)\leq \mu_t^{(\varepsilon)}(K_\delta^c) < \delta \quad \varepsilon \in [0, 1).
\end{align*}
Therefore, by \eqref{eq:5.32'}, $\{\mu_t^{(\varepsilon)}(x, dy)| \varepsilon \in [0, 1), x \in C\}$ is tight.
Hence exactly the same arguments as in the proof of $"(4) \Rightarrow (iv)"$ in the proof of Theorem \ref{ts1},
applied to $\{\mu_t^{(\varepsilon)}| \varepsilon \in [0, 1), x \in C\}$ (with $t$ fixed),
and Remark \ref{remark 3.3''} imply that $p_{1, (K_n),(b_n)}$ can be taken independent of $\varepsilon \in [0, 1)$.\\

Hence for all $\psi \in D(L_0)$
\begin{align}\label{eq:5.21}
p_{1, (C_n),(a_n)}\Big( P_t g - P_t^{(\varepsilon)}g \Big) \leq 2C p_{1, (K_n),(b_n)} (g - \psi) + \sup_{n \in \N} a_n \|P_t^{(\varepsilon)} \psi - P_t \psi\|_1.
\end{align}
Since $D(L_0)$ is dense in $(C_b(E), \tau_{1}^{\mc M})$ by Corollary \ref{cor_dense} below, \eqref{eq:5.20} and  \eqref{eq:5.21} imply Claim 1.
\end{claimproof}
\begin{claimproof}{2}\label{proof:claim2}
By Proposition \ref{pro_gen1}(c) and \eqref{(5.17)'}, \eqref{eq:(5.18)'} we have for all $\varepsilon \in (0,1)$
\begin{align}\label{eq:5.22}
LP_t \varphi - L P_t^{(\varepsilon)}\varphi = \big( P_tL_0 \varphi - P_t^{(\varepsilon)} L_0 \varphi\big) + P_t^{(\varepsilon)} \big( L_0 - L^{(\varepsilon)}\big)\varphi + \big( L^{(\varepsilon)} - L_0\big) P_t^{(\varepsilon)} \varphi.
\end{align}
By Claim 1 we have that as $\varepsilon \to 0$ the first summand converges to zero in $(C_b(E), \tau_{1}^{\mc M})$.
For the second summand we have
\[\|P_t^{(\varepsilon)}(L_0 - L^{(\varepsilon)})\varphi)\|_1 \leq \|(L_0-L^{(\varepsilon)})\varphi\|_1.\]
Defining $L_0^{(\varepsilon)}$ with domain $D(L_0)$ as in \eqref{eq:5.17} with $\lambda_\varepsilon$ replacing $\lambda$ and applying \eqref{(5.17)'} with $L, L_0$ replaced by $L^{(\varepsilon)}, L_0^{(\varepsilon)}$ respectively, we obtain that for every $x \in E$ 
\begin{align}\label{eq:5.34'}
L_0^{(\varepsilon)} \subset L^{(\varepsilon)}
\end{align}
and hence 
\begin{align}\label{eq:5.23}
(L_0 - L^{(\varepsilon)})\varphi(x) &= \int_E (\lambda_\varepsilon(\xi) - \lambda(\xi)) e^{i\langle\xi,x\rangle} \, \nu(d\xi) \\
	&= \int_{\R^m} \big( \lambda_\varepsilon \big( \Pi_m (v)\big) - \lambda \big( \Pi_m(v)) \big) e^{i \langle \Pi_m(v), x\rangle} f_0(v)dv.\nonumber
\end{align}
By \cite[Lemma 3.1]{lescot}, $\lambda_\varepsilon \to\lambda$ uniformly on bounded subsets of $E$ as $\varepsilon\to 0$,
so by \eqref{eq:5.23} and because $f_0$ has compact support,
we obtain
\[\lim_{\varepsilon \to 0} \|(L_0 - L^{(\varepsilon)})\varphi\|_1 = 0,\]
so by \eqref{def.semigroup} and Remark \ref{rem2} (ii) the second summand in the r.h.s. of \eqref{eq:5.22} also converges to zero in $(C_b(E), \tau_{1}^{\mc M})$.\\
Now let us turn to the third summand.
So, let $x \in E, \varepsilon \in (0, 1)$.
Then by an elementary calculation (see \cite[(1.2)]{lescot})
\begin{align*}
P_t^{(\varepsilon)}\varphi(x) &= \int_E e^{i\langle x, T_t^{\ast}\xi\rangle} \; \widehat{\mu_{t}^{(\varepsilon)}}(\xi) \nu(d\xi) = \widehat{\nu_t^{(\varepsilon)}}(x),
\end{align*}
where $\nu_t^\varepsilon$ is the image measure of $\widehat{\mu_{t}^{(\varepsilon)}}(\xi)\nu(d\xi)$ under $T_t^\ast : E \to E$.
By (H2) we know that $A^\ast \xi_j = \alpha_j \xi_j, j \in \N$, for some $\alpha_j \in \R$.
Hence by Euler's formula for $T_t^\ast = e^{A^{\ast}t}$ we have $T_t^\ast \xi_j = e^{t \alpha_j}\xi_j, j \in \N$.
Define the diagonal $m \times m$-matrix $D_{t,m}$ by $(D_{t, m})_{i, j} = \delta_{i, j} e^{t \alpha_j}$. \\
Then, obviously, $T_t^\ast \Pi_m(v) = \Pi_m \big( D_{t, m}(v)\big)$ and hence, since $\nu \equiv (\Pi_m)_\ast (f_0 dv)$, 
\begin{align}\label{eq:5.24}
P_t^{(\varepsilon)} \varphi(x) = \int_{\R^m} exp \big( i \langle x, \Pi_m(v)\rangle \big) g_\varepsilon (v) dv,
\end{align}
where for $v \in \R^m$
\begin{align*}
g_{\varepsilon}(v) := \big( \prod_{j = 1}^{m} e^{-t \alpha_j} \big) \widehat{\mu_t}^{(\varepsilon)} \big( \Pi_m \big( D^{-1}_{t,m}\big)(v)\big) f_0 \big( D^{-1}_{t,m}(v)\big).
\end{align*}
We note that 
\begin{align}\label{eq:5.25}
\vert g_{\varepsilon} \vert \leq \prod_{j=1}^{m} e^{-t\alpha_j} \mathbbm{1}_{supp (f_0 \circ D^{-1}_{t, m})}\sup_{v \in \R^m} \vert f_0(v)\vert
\end{align}
and that $g_\varepsilon \in S(\R^m, \mathbb{C})$ with $g_\varepsilon (-v) = \overline{g_\varepsilon (v)}, v \in \R^m$.
Hence $P_t^{(\varepsilon)}\varphi \in D(L_0)$, and by \eqref{eq:5.34'} analogously to \eqref{eq:5.23} we obtain 
\begin{align*}
\big( L^{(\varepsilon)}- L_0\big) P_t^{(\varepsilon)} \varphi(x) &= \big( L^{(\varepsilon)}_0 - L_0\big) P_t^{(\varepsilon)} \varphi(x) \\
	&= \int_E \big( \lambda(\xi) - \lambda_\varepsilon(\xi)\big) e^{i \langle \xi, x \rangle} \nu_t^{(\varepsilon)}(d\xi)\\
	&= \int_{\R^m} \big( \lambda (\Pi_m (v))\big) - \lambda_\varepsilon (\Pi_m (v)) e^{i \langle \Pi_m(v), x\rangle} g_\varepsilon(v)dv.
\end{align*}
Hence by \cite[Lemma 3.1]{lescot}, \eqref{eq:5.25} and because $supp f_0$ is compact,
we obtain
\[\lim_{\varepsilon \to 0} \|(L^{(\varepsilon)} - L_0)P_t^{(\varepsilon)} \varphi\|_1 = 0,\]
and Claim 2 is proved. 
\end{claimproof}\\\\
\noindent So, we have that $(L_0, D(L_0))$ is a core operator of the infinitesimal generator $(L, D(L))$ of our generalized Mehler semigroup $(P_t)_{t \geq 0}$ defined in \eqref{eq:3.16}, if (H1), (H2) and \eqref{eq:5.31'} hold.\\

We are not entirely sure whether \eqref{eq:5.31'} always holds, but it does, if $\lambda$ is real-valued, according to the following:

\begin{lemma}\label{lem:5.12''}
Suppose $\lambda \colon E \to \R$. Then \eqref{eq:5.31'} holds.
\end{lemma}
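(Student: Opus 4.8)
The plan is to verify, for the fixed $t>0$ at hand, the standard tightness criterion for families of Radon probability measures on the separable Hilbert space $E$ in terms of their characteristic functionals: $\{\nu_i\}$ is tight if (and only if) $\{\widehat\nu_i\}$ is equicontinuous at $0$ in the Sazonov topology, i.e.\ for every $\eta>0$ there is a nonnegative, symmetric, trace-class operator $S$ on $E$ with $\langle S\xi,\xi\rangle\le1\ \Rightarrow\ \sup_i|1-\widehat\nu_i(\xi)|\le\eta$ (see \cite{vakhania}). Recall from \eqref{eq:5.16} that $\widehat{\mu_t^{(\varepsilon)}}(\xi)=\exp\big(-\int_0^t\lambda_\varepsilon(T_s^\ast\xi)\,ds\big)$.

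The first step is to exploit that $\lambda$ is real-valued. Taking real and imaginary parts in \eqref{eq:5.15'} forces
\[
\lambda(\xi)=\frac12\langle\xi,R\xi\rangle+\int_E\big(1-\cos\langle\xi,x\rangle\big)M(dx)\ \ge\ 0,\qquad
\langle\xi,a\rangle=-\int_E\Big(\sin\langle\xi,x\rangle-\frac{\langle\xi,x\rangle}{1+\|x\|_E^2}\Big)M(dx),
\]
the last integral being absolutely convergent. Substituting the second identity into the imaginary part of $\lambda_\varepsilon$ from \eqref{eq:5.29'tilde}, and using $M_\varepsilon\le M$ together with $1-\cos\ge0$, one obtains the two bounds, uniform in $\varepsilon\in(0,1)$,
\[
\mathrm{Re}\,\lambda_\varepsilon(\xi)=\frac12\langle\xi,R\xi\rangle+\int_E\big(1-\cos\langle\xi,x\rangle\big)M_\varepsilon(dx)\ \le\ \lambda(\xi),\qquad
\big|\mathrm{Im}\,\lambda_\varepsilon(\xi)\big|\ \le\ \Psi(\xi):=\int_E\Big|\sin\langle\xi,x\rangle-\frac{\langle\xi,x\rangle}{1+\|x\|_E^2}\Big|M(dx).
\]
Since $\mathrm{Re}\int_0^t\lambda_\varepsilon(T_s^\ast\xi)\,ds\ge0$, the elementary inequality $|1-e^{-z}|\le|z|$ for $\mathrm{Re}\,z\ge0$ yields the $\varepsilon$-free majorant
\[
\sup_{\varepsilon\in(0,1)}\big|1-\widehat{\mu_t^{(\varepsilon)}}(\xi)\big|\ \le\ \int_0^t\big|\lambda_\varepsilon(T_s^\ast\xi)\big|\,ds\ \le\ \int_0^t\lambda(T_s^\ast\xi)\,ds+\int_0^t\Psi(T_s^\ast\xi)\,ds\ =:\ \Theta(\xi).
\]

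The second step is to show that $\Theta$ is Sazonov-continuous at $0$; then a single operator $S$ extracted from $\Theta$ serves all $\mu_t^{(\varepsilon)}$ at once and the criterion applies. The first summand equals $-\log\widehat{\mu_t}(\xi)$ with $\widehat{\mu_t}(\xi)\in(0,1]$, and since $\mu_t$ is a Radon probability measure its Fourier transform is Sazonov-continuous at $0$ by Minlos--Sazonov, hence so is $\xi\mapsto-\log\widehat{\mu_t}(\xi)$. For the second summand I would split the $M$-integral over $\{\|x\|_E\le1\}$, $\{1<\|x\|_E\le R\}$ and $\{\|x\|_E>R\}$ (with $R>1$ to be chosen): on the first region use $|\sin\theta-\theta|\le\theta^2/\sqrt3$ and $\|x\|_E^2/(1+\|x\|_E^2)\le\|x\|_E^2$; on the second, which has finite $M$-mass, use $|\sin\theta|\le|\theta|$ and Cauchy--Schwarz; on the third use $|\sin\theta|\le1$ and Cauchy--Schwarz. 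With $C_t:=\sup_{s\le t}\|T_s\|<\infty$ and $\int_E(\|x\|_E^2\wedge1)\,M(dx)<\infty$, the rank-one operators $(T_sx)\otimes(T_sx)$ assemble, after integration against $ds\otimes M$ over the relevant regions, into a single nonnegative symmetric trace-class operator $\bar S$ for which
\[
\int_0^t\Psi(T_s^\ast\xi)\,ds\ \le\ C_R\,\langle\bar S\xi,\xi\rangle^{1/2}+\frac1{\sqrt3}\,\langle\bar S\xi,\xi\rangle+t\,M\big(\{\|x\|_E>R\}\big),
\]
with $C_R<\infty$. Since $M$ is finite on $\{\|x\|_E>1\}$ we have $M(\{\|x\|_E>R\})\to0$ as $R\to\infty$; choosing $R$ large and then rescaling $\bar S$ makes the right-hand side arbitrarily small on the corresponding Sazonov ball, so $\Theta$ is Sazonov-continuous at $0$. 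Hence $\{\widehat{\mu_t^{(\varepsilon)}}:\varepsilon\in(0,1)\}$ is equicontinuous at $0$ in the Sazonov topology and the criterion gives that $\{\mu_t^{(\varepsilon)}:\varepsilon\in(0,1)\}$ is tight, i.e.\ \eqref{eq:5.31'} holds.

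The main obstacle is this last estimate: controlling, uniformly in $\varepsilon$, the correction term $\int_0^t\Psi(T_s^\ast\cdot)\,ds$ produced by the non-symmetric jump compensators by a single trace-class seminorm up to an arbitrarily small remainder. This is precisely where real-valuedness of $\lambda$ is used, since it turns $\mathrm{Im}\,\lambda_\varepsilon$ into the pure ``tail'' $\int_E(\sin\langle\xi,x\rangle-\langle\xi,x\rangle/(1+\|x\|_E^2))\,(M-M_\varepsilon)(dx)$, which is dominated uniformly in $\varepsilon$ by $\Psi$; for a general negative definite $\lambda$ no such cancellation is available, which is why \eqref{eq:5.31'} is asserted here only in the real-valued case.
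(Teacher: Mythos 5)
Your proof is correct, but it takes a noticeably different and longer route than the paper's. Both arguments ultimately rest on the same Minlos--Sazonov/Prokhorov criterion (tightness of a family of probability measures on a separable Hilbert space is equivalent to uniform Sazonov-equicontinuity of the characteristic functionals at $0$, \cite[Chap.~VI, Theorem 2.3]{partha}), but the paper exploits a monotonicity that you miss: real-valuedness of $\lambda$ forces $\lambda(-\xi)=\lambda(\xi)$, hence by uniqueness of the L\'evy--Khintchine triplet $a=0$ and $M$ is symmetric, so every $\lambda_\varepsilon$ is itself real, nonnegative and satisfies $\lambda_\varepsilon\le\lambda$ pointwise (since $M_\varepsilon\le M$ and $1-\cos\ge 0$). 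This gives $\widehat{\mu_t^{(\varepsilon)}}\ge\widehat{\mu_t}$ pointwise, so the single Sazonov bound $1-\widehat{\mu_t}(\xi)\le\langle S_\delta\xi,\xi\rangle+\delta$ for $\mu_t$ transfers verbatim and uniformly in $\varepsilon$, and the lemma follows in two lines. You instead allow $\operatorname{Im}\lambda_\varepsilon\neq 0$, dominate it by the $\varepsilon$-free majorant $\Psi$, and then verify Sazonov continuity of $\Theta$ by assembling trace-class operators from $ds\otimes M$ by hand; this is all sound (the inequality $|1-e^{-z}|\le|z|$ for $\operatorname{Re}z\ge 0$, the Cauchy--Schwarz splittings, and the $R\to\infty$ limit all check out), but the entire second half is unnecessary once one notes that symmetry of $M$ makes $\operatorname{Im}\lambda_\varepsilon\equiv 0$. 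What your version buys is independence from the uniqueness of the triplet and an explicit, quantitative majorant $\Theta$ that would survive more general truncations of $M$ not respecting its symmetry; what the paper's version buys is brevity and the clean comparison $\widehat{\mu_t^{(\varepsilon)}}\ge\widehat{\mu_t}$, which is the real content of the real-valuedness hypothesis.
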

\begin{proof}\label{proof_of_lemma_5.12''}
Let $\xi \in E$. Then by assumption, \eqref{eq:5.15'} and \eqref{eq:5.29'tilde} for all $\varepsilon \in [0, 1)$
\begin{align*}
\lambda_\varepsilon(\xi) = \frac{1}{2} \langle \xi, R\xi\rangle + \int_E (1 - cos \langle \xi, x\rangle) M_\varepsilon (dx)
\end{align*}
and $\lambda_\varepsilon(\xi)$ is decreasing in $\varepsilon$.
Then for all $\varepsilon \in [0, 1)$ (recalling that $P_t^{(0)}:= P_t, \mu_t^{(0)} := \mu_t$) we have
\begin{align}\label{eq:5.37'}
\widehat{\mu_{t}^{(\varepsilon)}}(\xi) \geq \widehat{\mu_{t}}(\xi)\; \forall \xi \in E.
\end{align}
Since $\widehat{\mu_{t}}$ is Sazonov continuous, for $\delta > 0$ there exists a nonnegative definite symmetric trace class operator $S_\delta$ on $E$ such that 
\begin{align*}
1 - \widehat{\mu_{t}}(\xi) \leq \langle S_\delta \xi, \xi\rangle + \delta \; \forall \xi \in E.
\end{align*}
(see \cite[Chap. VI, Theorem 2.3]{partha}). Hence by \eqref{eq:5.37'}
\begin{align*}
1 - \widehat{\mu_{t}^{(\varepsilon)}}(\xi) \leq \langle S_\delta \xi, \xi\rangle + \delta \; \forall \xi \in E.
\end{align*}
Hence the assertion follows again by [\cite[Chap. VI, Theorem 2.3]{partha}.
\end{proof}

\noindent In the above example the Kolmogorov operator (see \eqref{eq:5.17}) was a pseudo differential operator on $E$ with symbol $\lambda$ only dependent on $\xi$ (not on $x$), i.e., constant diffusion, and linear drift.
Therefore, finally we give an example on infinite dimensional state space $E$,
but where the Kolmogorov operator is a partial differential operator with non-constant second order (=diffusion) coefficients and nonlinear first order (=drift) coefficients.

\subsection{Applications to SDEs of locally monotone type on Hilbert spaces}\label{ex:L3}~\\
Consider the situation of Section \ref{Section 4.1} (so $E \coloneqq H\coloneqq$ a separable Hilbert space $H$ which is the pivot space of a Gelfand-triple $V \subset H \subset V^\ast$ as defined there).
Let the coefficients $A$ and $B$ be independent both of $\omega\in\Omega$ and $t\in[0,T]$ and that $U = H$.
Assume that $B$ satisfies \eqref{eq:supB} and we assume that $A$ can be written as a sum of two operators $C$ and $F$.
More precisely, let $(C,D(C))$ be a self-adjoint operator on $H$ such that $-C\geq\theta_0\in(0,\infty)$.
Define $V\coloneqq D((-C)^{\frac{1}{2}})$, equipped with the graph norm of $(-C)^{\frac{1}{2}}$,
and $V^\ast$ to be its dual.
Then it is easy to see that $C$ extends uniquely to a continuous linear operator from $V$ to $V^\ast$,
again denoted by $C$ such that for all $u,v\in V$
\begin{align}\label{eq:5.35}
{}_{V^{\ast}}\langle -Cu, v \rangle_V = \langle u,v \rangle_H.
\end{align}
Furthermore, let $F \colon H\to V^\ast$ be $\mc B(H)/\mc B(V^\ast)$-measurable such that $F$ restricted to $V$ satisfies (H1)-(H4) in Section \ref{Section 4.1}
with $B\equiv 0$, f constant and $A$ replaced by $F$ for $\alpha = 2,\ \beta\in[0,\infty)$, and $\theta = 0$.\\
Define
\begin{align}\label{eq:5.36}
A(u)\coloneqq Cu + F(u),\, u\in V.
\end{align}
Then it is easy to check that $A$ satisfies (H1)-(H4) in Section \ref{Section 4.1} with $\theta = \theta_0$, and $\alpha = 2,\ \beta\in[0,\infty)$ and $f$ constant.
So, by Theorem \ref{T1}, $(P_t)_{t\geq0}$ defined in \eqref{eq:3.9'} is a Markov $C_0$-semigroup on $\la C_\kappa(H),\tm\ra$ with $\kappa\coloneqq \la 1+\|\cdot\|_H^m\ra^{-1},\, m \in [1, \infty)$, due to Claim 2 in Section \ref{Section 4.1}.
Let $(L,D(L))$ be its infinitesimal generator.\\\\
\noindent Now fix an orthonormal basis $\{e_n\mid n\in\N\}$ of H consisting of elements in $D(C)$ and define
\begin{align}\label{eq:5.37}
D(L_0) \coloneqq \{ f\la\langle e_1,\cdot\rangle,\dots,\langle e_N,\cdot\rangle \mid N\in\N,\, f\in C_b^2(\R^N) \ra \}.
\end{align}
Define the Kolmogorov operator associated to SDE \eqref{SEE} with $B$ as above and $A$ as in \eqref{eq:5.36} with domain $D(L_0)$ as follows:
\begin{align}\label{eq:5.38}
L\varphi (x) := &\frac{1}{2} \sum_{i,j = 1}^\infty \langle B(x) e_i, B(x) e_j \rangle \frac{\partial}{\partial e_i} \la \frac{\partial\varphi}{\partial e_j} \ra (x) \\
				&+ \sum_{i=1}^\infty \la \langle x , C e_i\rangle + {}_{V^\ast}\langle F(x), e_i\rangle_V \ra \frac{\partial\varphi}{\partial e_i}(x),\quad x\in H,\, \varphi\in D(L_0).\nonumber
\end{align}
Here $\frac{\partial}{\partial e_i}$ denotes partial derivative in directions $e_i$ and we note that all sums in \eqref{eq:5.38} are in fact finite sums, since $\varphi\in D(L_0)$.\\
\noindent Now let us prove that
\begin{align}\label{eq:5.39}
L_0 \subset L.
\end{align}
For this we need one more condition, i.e. we assume:
\begin{equation}\label{eq:5.40}
\begin{aligned}
&\text{The eigenbasis of } (C,D(C))\text{ above can be chosen in such a way}\\
&\text{that } x\mapsto {}_{V^\ast}\langle F(x), e_i\rangle_V \text{ is continuous on } H \text{ and}\\
&\sup_{x\in H} \frac{|{}_{V^\ast}\langle F(x), e_i\rangle_V|}{1+|x|_H^m} < \infty\quad \text{for some }m\in [1,\infty) \text{\;and all\;} i \in \N.
\end{aligned}
\end{equation}
\begin{remark}\label{rem:5.13}
\item[(i)] A typical example for $F: H \to V^{\ast}$ above is a demicontinuous function (i.e., $x \mapsto {}_{V^\ast}\langle F(x), u\rangle_{V}$ is continuous on $H$ for all $u \in V$) with $F(0) \in H$, which is one sided Lipschitz and of at most polynomial growth.
\item[(ii)] A typical example for $(C, D(C))$ is the Laplace operator on an open bound domain $\mathcal{O} \subset \R^d$ with Dirichlet boundary conditions considered on $L^2(\mathcal{O})$.\\

Under condition \eqref{eq:5.40} a straightforward application of It\^o's formula for It\^o-processes in $\R^N, N \in \N$,
yields for all $\varphi \in D(L_0)$ and for the solution $X(t, x), t \geq 0, x \in E$, to \eqref{SEE} with $A$ and $B$ as above:
\begin{align}\label{eq:5.41}
P_t \varphi(x) &= \E\big[\varphi (X(t, x))\big]= \varphi(x) + \int_0^t \E \big[L_0 \;							\varphi(X(s, x))\big]ds \\
		&= \varphi(x) + \int_0^t \int_H L_0 \;						\varphi(y) \; \mu_s(x, dy)ds,
\end{align}
where 
\begin{align*}
\mu_s(x, dy) := (\P \circ X(s, x)^{-1}) (dy) \in M_\kappa (\R^d).
\end{align*}
Now, exactly the same arguments as in Section \ref{ex:5.9} prove that \eqref{eq:5.39} holds.\\

To prove that $(L_0, D(L_0))$ is a Markov core operator for $(L, D(L))$ on $(C_\kappa(H), \tm)$
we shall again use Proposition \ref{prop:5.9}, i.e. we have to prove uniqueness for the corresponding Fokker-Planck-Kolmogorov equation,
which is in general very difficult here, since our state space $H$ is infinite dimensional, and more assumptions are needed.
Though there are such results also when $B$ depends on $x$ (see \cite{bogachev15}), for simplicity we shall assume that $B$ is constant.
More preasely, we additionally assume that:
\begin{align}\label{eq:5.42}
B(x) = B \in L_2(H)
\end{align}
for all $x \in V$ with $B = B^\ast$, $B$ non-negative definite with $kerB = \{0\}$, and that for the eigenvalues $\alpha_k \in (0, \infty), k \in \N$, of $B$. There exists $m \in [1, \infty)$ such that
\begin{align}\label{eq:5.43}
\sup_{x \in H} (1 + \vert x\vert^m)^{-1} \sum_{k = 1}^{\infty} \alpha_k^{-1} \vert {}_{V^{\ast}}\langle F(x), e_k\rangle_V\vert^2 < \infty.
\end{align}
Of course, we may assume that both \eqref{eq:5.40} and \eqref{eq:5.43} hold with the same $m$ (otherwise we take the maximum of the two).
Then taking this $m$ and $\kappa := (1 + \|\cdot\|_H^m)^{-1}$,
it follows by \cite[Remark 2.1 (iii) and Theorem \ref{hyp-kappa}]{bogachev15} and by \eqref{eq:5.39} that all assumtions in Theorem \ref{prop:5.9} are fulfilled.
Hence $(L_0, D(L_0))$ is a Markov core operator for $(L, D(L))$ on $\la C_\kappa(H), \tm\ra$. 
\end{remark}

\section{Convex $C_0$-semigroups on $(C_\kappa(E), \tm)$}\label{Section6}
We now draw our attention to $C_0$-semigroups on $(C_\kappa(E), \tm)$ consisting of convex increasing operators on $\ck$. We show that these lead to viscosity solutions to abstract differential equations that are given in terms of their generator. 
We start by introducing our notion of a viscosity solution for abstract differential equations of the form
\begin{equation}\label{eq:PDE}
	u'(t)= L u(t), \quad \text{for all }t> 0.
\end{equation}
In the following, an operator $T\colon \ck\to\ck$ is called \textit{increasing} if
\[
T\phi_1\leq T\phi_2\quad \text{for all }\phi_1,\phi_2\in \ck\text{ with }\phi_1\leq \phi_2.
\]
We say that an operator $T\colon \ck\to\ck$ is \textit{convex} if
\[
T\big(\lambda\phi_1+(1-\lambda)\phi_2\big)\leq \lambda T\phi_1+(1-\lambda)T\phi_2
\]
for all $\lambda\in [0,1]$ and $\phi_1,\phi_2\in \ck$.\\

\bd{def.viscosity}\label{Def. 6.1}
Let $L\colon D\to C_\kappa(E)$ be a nonlinear operator, defined on a nonempty set $D\subset\ck$. We say that $u\colon [0,\infty)\to C_\ka(E)$ is a $D$-\textit{viscosity subsolution} to the abstract differential equation \eqref{eq:PDE} if $u$ is continuous w.r.t.\ the mixed topology $\tau_\kappa^{\mathscr M}$ and, for every $t>0$, $x\in E$, and every differentiable function $\psi\colon (0,\infty)\to C_\kappa(E)$ with $\psi(t)\in D$, $\big(\psi(t)\big)(x)=\big(u(t)\big)(x)$, and $\psi(s)\geq u(s)$ for all $s>0$,
\[
\big(\psi'(t)\big)(x)\leq \big(L \psi(t)\big)(x).
\]
Analogously, $u$ is called a $D$-\textit{viscosity supersolution} to \eqref{eq:PDE} if $u\colon [0,\infty)\to C_\ka(E)$ is continuous and, for every $t>0$, $x\in E$, and every differentiable function $\psi\colon (0,\infty)\to C_\kappa(E)$ with $\psi(t)\in D$, $\big(\psi(t)\big)(x)=\big(u(t)\big)(x)$, and $\psi(s)\leq u(s)$ for all $s>0$,
\[
\big(\psi'(t)\big)(x)\geq \big(L \psi(t)\big)(x).
\]
We say that $u$ is a $D$-\textit{viscosity solution} to \eqref{eq:PDE} if $u$ is a viscosity subsolution and a viscosity supersolution.
\ed

 Note that the previous definition does, a priori, not require the class of test functions for a viscosity solution to be rich in any sense. Therefore, in order to obtain uniqueness in standard settings, one has to verify on a case by case basis that the operator $L$ is defined on a sufficiently large set $D$ in order to apply standard comparison methods.\ Concerning the existence of $D$-viscosity solutions, we have the following theorem.

\bt{thm.viscosity}\label{Theorem6.2}
 Let $P$ be a $C_0$-semigroup on $(C_\kappa, \tm)$ consisting of convex increasing operators with infinitesimal generator $(L,D(L))$.\ Then, for every $\phi\in C_\kappa(E)$, the function $u\colon [0,\infty)\to C_\kappa(E), \;t\mapsto P_t \phi$ is a $D(L)$-viscosity solution to the abstract initial value problem
\begin{eqnarray*}
  u'(t)&=&L u(t), \quad \text{for all }t> 0,\\
  u(0)&=&\phi.
 \end{eqnarray*}
\et

\begin{proof}
 Fix $t>0$ and $x\in E$. We first show that $u$ is a viscosity subsolution. To that end, let $\psi\colon (0,\infty)\to C_\kappa(E)$ be a differentiable function with with $\psi(t)\in D(L)$, $\big(\psi(t)\big)(x)=\big(u(t)\big)(x)$ and $\psi(s)\geq u(s)$ for all $s>0$. Then, for $h\in (0,1)$ with $h<t$, the semigroup property implies that
 \begin{align*}
  0&=\frac{P_hP_{t-h}\phi-P_t\phi}{h}=\frac{P_hu(t-h)-u(t)}{h}\leq \frac{P_h\psi(t-h)-u(t)}{h}\\
  &\leq \frac{P_h\psi(t-h)-P_h\psi(t)}{h} +\frac{P_h\psi(t)-\psi(t)}{h} +\frac{\psi(t)-u(t)}{h}\\
  &\leq \bigg(P_h\Big(\psi(t)+\tfrac{\psi(t-h)-\psi(t)}{h}\Big)-P_h\psi(t)\bigg)+\frac{P_h\psi(t)-\psi(t)}{h} +\frac{\psi(t)-u(t)}{h},
 \end{align*}
 where, in the last inequality, we used the convexity of the map $v\mapsto P_h \big(\psi(t)+v\big)-P_h\psi(t)$.
 The strong continuity of the semigroup $P$ and $\psi(t)\in D(L)$ imply that
 \[
  P_h\Big(\psi(t)+\tfrac{\psi(t-h)-\psi(t)}{h}\Big)-P_h\psi(t)\to -\psi'(t)\quad \text{and}\quad \frac{P_h\psi(t)-\psi(t)}{h}\to L\psi(t)
 \]
 as $h\downarrow 0$ in the mixed topology $\tau_\kappa^{\mathscr M}$. Using the equality $\big(u(t)\big)(x)=\big(\psi(t)\big)(x)$, it follows that
 \[
  0\leq -\big(\psi'(t)\big)(x)+\big(L\psi(t)\big)(x).
 \]
 In order to show that $u$ is a viscosity supersolution, let $\psi\colon (0,\infty)\to C_\kappa(E)$ differentiable with $\psi(t)\in D(L)$, $\big(\psi(t)\big)(x)=\big(u(t)\big)(x)$ and $\psi(s)\leq u(s)$ for all $s>0$. Again, using the semigroup property, we find that, for all $h\in (0,1)$ with $h<t$,
 \begin{align*}
  0&=\frac{P_t\phi-P_hP_{t-h}\phi}{h}=\frac{u(t)-P_hu(t-h)}{h}\leq \frac{u(t)-P_h\psi(t-h)}{h}\\
  &= \frac{u(t)-\psi(t)}{h}+\frac{\psi(t)-P_h\psi(t)}{h}+\frac{P_h\psi(t)-P_h\psi(t-h)}{h}\\
  &\leq \frac{u(t)-\psi(t)}{h}+\frac{\psi(t)-P_h\psi(t)}{h}+\bigg(P_h\Big(\psi(t-h)+\tfrac{\psi(t)-\psi(t-h)}{h}\Big)- P_h\psi(t-h)\bigg),
 \end{align*}
   where, in the last step, we used the convexity of the map $v\mapsto P_h \big(\psi(t-h)+v\big)-P_h\psi(t-h)$. Again, the strong continuity of the semigroup $P$ and $\psi(t)\in D(L)$ imply that
 \[
  \frac{\psi(t)-P_h\psi(t)}{h}\to -L\psi(t)\quad \text{and}\quad P_h\Big(\psi(t-h)+\tfrac{\psi(t)-\psi(t-h)}{h}\Big)- P_h\psi(t-h)\to \psi'(t)
 \]
 as $h\downarrow 0$ in the mixed topology $\tau_\kappa^{\mathscr M}$. Since $\big(u(t)\big)(x)=\big(\psi(t)\big)(x)$, we find that $$0\leq -\big(L\psi(t)\big)(x)+\big(\psi'(t)\big)(x),$$ and the proof is complete.
\end{proof}

Finally, we derive a stochastic representation for $P$ using convex expectations. For a measurable space $(\Omega,\mathcal F)$, we denote the space of all bounded $\mathcal F$-measurable functions (random variables) $\Omega\to \R$ by $B_b(\Omega,\mathcal F)$. For two bounded random variables $X,Y\in B_b(\Omega,\mathcal F)$ we write $X\leq Y$ if $X(\omega)\leq Y(\omega)$ for all $\omega\in \Omega$. For a constant $m\in \R$, we do not distinguish between $m$ and the constant function taking that value.

\begin{definition}
	Let $(\Omega,\mathcal F)$ be a measurable space. A functional $\EE\colon B_b(\Omega,\mathcal F)\to\R$ is called a \textit{convex expectation} if, for all $X,Y\in B_b(\Omega,\mathcal F)$ and $\lambda\in [0,1]$,
	\begin{enumerate}
		\item[(i)] $\EE(X)\leq \EE(Y)$ if $X\leq Y$,
		\item[(ii)] $\EE(m)=m$ for all constants $m\in \R$,
		\item[(iii)] $\EE\big(\lambda X+(1-\lambda )Y\big)\leq \lambda \EE(X)+(1-\lambda)\EE(Y)$.
	\end{enumerate}
	We say that $(\Omega,\mathcal F,\EE)$ is a \textit{convex expectation space} if there exists a set of probability measures $\mathcal P$ on $(\Omega,\mathcal F)$ and a function $\alpha\colon \mathcal P \to[0,\infty)$ such that
	\[
	\EE(X)=\sup_{\P\in \mathcal P}\big( \E_\P(X)-\alpha (\P)\big)\quad \text{for all }X\in B_b(\Omega,\mathcal F),
	\]
	where $\E_\P(\cdot)$ denotes the expectation w.r.t. to the probability measure $\P$.
\end{definition}

The following theorem is a consequence of \cite[Theorem 5.6]{denk2018kolmogorov} and the fact that the $\tm$-continuity of $P_t$ on $\tm$-bounded subsets implies the so-called continuity from above or Daniell continuity of $P_t$, for $t\geq 0$.

\begin{theorem}\label{stochrep}
	Assume that $E$ is a Polish space, $\kappa\equiv 1$, and $P$ is a $C_0$-semigroup of increasing convex operators with $P_t m=m$ for all $t\geq 0$ and $m\in \R$. Then, there exists a quadruple $(\Omega,\mathcal F,(\EE^x)_{x\in E},(X(t))_{t\geq 0})$ such that
	\begin{enumerate}
		\item[(i)] $X(t)\colon \Omega\to E$ is $\mathcal F$-$\mathcal B$-measurable for all $t\geq 0$,
		\item[(ii)] $(\Omega,\mathcal F,\EE^x)$ is a convex expectation space with $\EE^x(\vp(X(0)))=\vp(x)$ for all $x\in E$ and $\vp\in C_b(E)$,
		\item[(iii)] For all $0\leq s<t$, $n\in \N$, $0\leq t_1<\ldots <t_n\leq s$ and $\psi\in C_b(E^{n+1})$,
		\begin{equation}\label{eq.stochrep1}
			\EE^x\big(\psi(X(t_1),\ldots,X(t_n),X(t))\big)=\EE^x\left(\big(P_{t-s}\psi(X(t_1),\ldots,X(t_n),\, \cdot\,)\big)(X(s))\right).
		\end{equation}
	\end{enumerate}
	In particular,
	\begin{equation}\label{eq.stochrep2}
		\big(P_t\vp\big)(x)=\EE^x(\vp(X(t))).
	\end{equation}
	for all $t\geq 0$, $x\in E$, and $\vp\in C_b(E)$.
\end{theorem}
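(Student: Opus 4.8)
The plan is to apply \cite[Theorem 5.6]{denk2018kolmogorov}, which provides exactly such a stochastic representation for a convex monotone semigroup on $C_b(E)$ satisfying a Daniell-type continuity from above, so the core of the proof is to verify that hypothesis. First I would recall the statement of that theorem: given a semigroup $(P_t)_{t\geq 0}$ of increasing convex operators on $C_b(E)$ with $P_t m = m$ for all $m \in \R$, $t \geq 0$, and such that each $P_t$ is continuous from above (i.e.\ if $\phi_n \downarrow 0$ pointwise with $\phi_n \in C_b(E)$, then $P_t\phi_n \downarrow 0$ pointwise), one obtains a quadruple $(\Omega, \mathcal F, (\EE^x)_{x\in E}, (X(t))_{t\geq 0})$ with properties (i)--(iii) and the consequence \eqref{eq.stochrep2}. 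Thus the only thing to prove here is that a $C_0$-semigroup $P$ on $(C_b(E), \tau_1^{\mc M})$ consisting of increasing convex operators with $P_t m = m$ automatically has this continuity-from-above property.

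The key step is therefore: \emph{$\tau_1^{\mc M}$-continuity of $P_t$ implies Daniell continuity of $P_t$}. I would argue as follows. Let $\phi_n \in C_b(E)$ with $\phi_n \downarrow 0$ pointwise as $n \to \infty$. Since $E$ is Polish, hence a $k_{\R}$-space (Remark \ref{rem2.2}(b)), Dini's theorem gives that $\phi_n \to 0$ uniformly on every compact subset of $E$; moreover $\|\phi_n\|_\infty \leq \|\phi_1\|_\infty$, so $(\phi_n)$ is $\|\cdot\|_\infty$-bounded. By the definition of the mixed topology, uniform convergence on compacts together with uniform norm-boundedness is precisely $\tau_1^{\mc M}$-convergence (this is the defining property of $\tau_1^{\mc M}$ on norm-bounded sets, cf.\ Section \ref{section2} and Appendix \ref{app.A}). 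Hence $\phi_n \to 0$ in $\tau_1^{\mc M}$, and by $\tau_1^{\mc M}$-continuity of $P_t$ we get $P_t\phi_n \to 0$ in $\tau_1^{\mc M}$, in particular $P_t\phi_n(x) \to 0$ for every $x \in E$. Since each $P_t$ is increasing and $\phi_n \geq \phi_{n+1} \geq 0$, the sequence $P_t\phi_n$ is decreasing and nonnegative, so $P_t\phi_n \downarrow 0$ pointwise, which is exactly continuity from above. (Here $\tau_1^{\mc M}$-continuity of each $P_t$ is part of being a $C_0$-semigroup: by Definition \ref{def:3.1_semigroup}(iii) the operators $(P_t)_{0\leq t\leq T}$ are $\tau_1^{\mc M}$-equicontinuous, in particular each $P_t$ is $\tau_1^{\mc M}$-continuous.)

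Having established continuity from above, I would then simply invoke \cite[Theorem 5.6]{denk2018kolmogorov} with this semigroup to obtain the quadruple and properties (i)--(iii); property (ii)'s identity $\EE^x(\phi(X(0))) = \phi(x)$ comes from $P_0 = \mathrm{id}$, and \eqref{eq.stochrep2} is the special case $n = 0$ (or $s = 0$) of \eqref{eq.stochrep1}. The representation $\EE^x(X) = \sup_{\P \in \mathcal P}(\E_\P(X) - \alpha(\P))$ making $(\Omega, \mathcal F, \EE^x)$ a convex expectation space is part of the conclusion of that theorem as well (it is built from the Daniell--Stone / Choquet-type representation of the monotone convex functional $\phi \mapsto \EE^x(\phi(X(t_1),\dots))$, using precisely the continuity from above just verified).

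The main obstacle is really the one conceptual point above — matching the topological notion ($\tau_1^{\mc M}$-continuity) to the order-theoretic notion (Daniell/continuity from above) needed by \cite{denk2018kolmogorov} — together with making sure the hypotheses of that external theorem are met verbatim (Polish $E$, $P_t$ increasing, convex, $P_t m = m$, semigroup property, and continuity from above). I do not expect genuine difficulties beyond this; the convexity and monotonicity are assumed outright, and the Dini argument is standard. One should, however, double-check that \cite{denk2018kolmogorov} does not additionally require, e.g., strong continuity in $t$ in a stronger sense than what a $C_0$-semigroup on $(C_b(E), \tau_1^{\mc M})$ provides, or some tightness condition; if it does, these are supplied respectively by Definition \ref{def:3.1_semigroup}(iv) and by the representing-kernel tightness built into Theorem \ref{ts1}(b)(4) (which, although stated in the linear case, has a convex analogue via the sublinear expectation $\phi \mapsto P_t\phi(x)$ and continuity from above).
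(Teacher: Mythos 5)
Your proposal matches the paper's own argument: the paper likewise derives the theorem directly from \cite[Theorem 5.6]{denk2018kolmogorov} together with the observation that $\tau_1^{\mc M}$-continuity of each $P_t$ implies continuity from above, and your Dini/Proposition \ref{t14} argument is precisely the standard way to verify that implication (cf.\ the analogous reasoning in Remark \ref{thm:a10}). No gaps; this is essentially the same proof, just written out in more detail than the paper gives.
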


Let $E$ be a Polish space. The quadruple $(\Omega,\mathcal F,(\EE^x)_{x\in E},(X(t))_{t\geq 0})$ can be seen as a nonlinear version of a Markov process. As an illustration, we consider the case, where the semigroup $P$ and thus $\EE^x$ is linear for all $x\in E$, and choose $\psi(x,y)=\vp(x)1_B(y)$, for $x,y\in E$, with $\vp\in C_b(E)$ and $B\in \mathcal B^n$, where $\mathcal B^n$ denotes the product $\sigma$-algebra of the Borel $\sigma$-algebra $\mathcal B$. Then, $\EE^x=\E_{\P^x}$ is the expectation w.r.t.\ a probability measure $\P^x$ on $(\Omega,\mathcal F)$ for all $x\in E$. Using the continuity from above and Dynkin's lemma, Equation \eqref{eq.stochrep1} reads as
\[
\E_{\P^x}\big(\vp(X(t))1_B(X(t_1),\ldots, X(t_n))\big)=\E_{\P^x}\big[\big(P_{t-s}\vp\big)(X(s))1_B(X(t_1),\ldots, X(t_n))\big],
\]
which is equivalent to the Markov property
\begin{equation}\label{eq.linmarkprop}
	\E_{\P^x}\big(\vp(X(t))|\mathcal F_s\big)=\big(P_{t-s}u\big)\big(X(s)\big) \quad \P^x\text{-a.s.},
\end{equation}
where $\mathcal F_s:=\sigma\big(\{X(u)\, |\, 0\leq u\leq s\}\big)$. On the other hand, if $\EE^x=\E_{\P^x}$, the Markov property \eqref{eq.linmarkprop} implies Property (iii) from Theorem \ref{stochrep}.

\section{Examples: value functions of optimal control problems}\label{sec.control}

\subsection{A finite-dimensional setting}

In this section, we show that value functions of a large class of optimal control problems are examples of nonlinear $C_0$-semigroups. We illustrate this by means of a simple controlled dynamics in $\R^d$, with $d\in \N$ where the contol acts on the drift of a diffusion process. However, with similar techniques also other classes of controlled diffusions fall into our setup. 
Throughout, let $W=(W(t))_{t\geq 0}$ be a Brownian Motion on a complete filtered probability space $\big(\Omega, \mathcal{F}, (\mathcal F_t)_{t\geq 0},  \mathbb{P}\big)$ satisfying the usual assumptions and $\sigma\in \R^{d\times d}$ is symmetric and positive definite. For $m\in \N$, we consider a fixed nonempty set $A\subset \mathbb{R}^m$ of controls with $0\in A$ and define the set of admissible controls $\mathcal{A}$ as the set of all progressively measurable processes $\alpha\colon \Omega \times [0,T] \to A$ with
\[
\E\bigg(\int_0^t|\alpha(s)|ds\bigg)<\infty.
\]
For a fixed measurable function $b\colon \Omega\times \R^d\times A\to \R^d$, an admissible control $\alpha\in \mathcal A$, and an initial value $x\in \R^d$, we consider the controlled dynamics
\begin{equation}
	\label{dynamics} 
	dX^\alpha(t,x)= b\big(X^{\alpha}(t,x), \alpha(t)\big)dt +\sigma dW(t),\quad \text{for } t\geq 0, \quad X^{\alpha}(0,x)=x.
\end{equation}
We assume that the drift term $b$ satisfies the following Lipschitz and growth conditions: there exists a constant $C\geq 0$ such that
\begin{align*}\label{assumption.on.b}
	& b(x,0)=0,\quad  \mathbb P\text{-a.s.}, \quad \text{for all }x\in \R^d,\\
	& |b(x_1,a)-b(x_2,a)| \leq C|x-y|,\quad \mathbb P\text{-a.s.}, \quad \text{for all } x_1,x_2 \in \R^d \text{ and }a\in A, \\
	& |b(x,a)| \leq C\big(1+|x|+|a|\big),\quad \mathbb P\text{-a.s.}, \quad \text{for all }x\in \R^d\text{ and }a\in A. 
\end{align*}
Under these assumptions, by standard SDE theory, for each initial value $x\in \R^d$ and every admissible control $\alpha \in \mathcal{A}$, there exists a unique strong solution $(X^{\alpha}(t,x))_{t\geq 0}$ to the controlled SDE \eqref{dynamics}. 

We consider the weight function $\kappa\equiv 1$ and a running cost function $g\colon  A\to [0,\infty)$ with $g(0)=0$ and 
\[
\overline g^*(y):=\sup_{a\in A} \big(|a|y-g(a)\big)<\infty
\]
for all $y\geq 0$. For $\vp\in C_b(\R^d)$, we consider the value function
\[
V(t,x;\vp):=\sup_{\alpha\in \mathcal A}\mathbb E\bigg(\vp\big(X^{\alpha}(t,x)\big)-\int_0^tg\big(\alpha(s)\big)ds\bigg),
\]
and we define $\big(P_t\vp\big)(x):=V(t,x;\vp)$ for all $t\geq 0$ and $x\in \R^d$. We first show that $P_t\colon C_b(\R^d)\to C_b(\R^d)$ is well-defined with $\|P_t\vp\|_\infty\leq \|\vp\|_\infty$ for all $\vp\in C_b(\R^d)$. Using the Lipschitz condition of $b$ together with Gronwall's lemma, we obtain the a priori estimate
\[
\E\big(\big|X^{\alpha}(t,x_1)-X^{\alpha}(t,x_2)\big|\big)\leq e^{Ct} |x_1-x_2|
\]
for all $t\geq 0$, $x_1,x_2\in \R^d$, and $\alpha \in \mathcal A$. This shows that the value function $V$ is continuous in the $x$-variable. Moreover, $\|V(t,\,\cdot\, ,\vp)\|_\infty\leq \|\vp\|_\infty$ for all $\vp\in C_b(\R^d)$,since $g(0)=0$. Since the value function $V$ satisfies the dynamic programming principle, cf.\ Pham \cite{pham} or Fabbri et al.\ \cite{Fabbri17}, the family $P=(P_t)_{t\geq 0}$ is a semigroup.

Using the linear growth of $b$ together with Gronwall's lemma,
\begin{equation}\label{eq.integrability-controlled-dynamics}
	\E\big(\big|X^{\alpha}(t,x)-x\big|\big)+|x|\leq e^{Ct} \bigg( |x|+\|\sigma\|\sqrt t+ Ct +\int_0^t C|\alpha(s)|ds\bigg)
\end{equation}
for all $t\geq 0$, $x\in \R^d$, and $\alpha\in \mathcal A$.  Let $\ep>0$, $\vp\in C_b(\R^d)$,  and $t\geq 0$. Then, for every $r\geq 0$, there exists some $\delta>0$ such that
\[
|\vp(y)-\vp(x)|<\frac\ep 2\quad \text{for all }x,y\in \R^d\text{ with }|x|\leq r\text{ and }|x-y|<\delta.
\]
Hence, for all $x\in \R^d$ with $|x|\leq r$, Equation \ref{eq.integrability-controlled-dynamics} implies that
\begin{align*}
	V(t,x;\vp)-\vp(x)&\leq \frac{\ep}{2}+2\|\vp\|_\infty\E\Big(1_{\{|X^{\alpha}(t,x)-x|>\delta\}}\Big)-\E\bigg(\int_0^tg\big(\alpha(s)\big)ds\bigg)\\
	&	\leq  \frac{\ep}{2}+\frac{2\|\vp\|_\infty}{\delta}\E\big(|X^{\alpha}(t,x)-x|\big)-\E\bigg(\int_0^tg\big(\alpha(s)\big)ds\bigg)	\\
	&\leq \frac{\ep}{2}+(e^{Ct}-1)|x|+e^{Ct}(Ct+\|\sigma\|\sqrt t)+t\overline g^*\Big(\tfrac{2\|\vp\|_\infty}{\delta} Ce^{Ct}\Big).
\end{align*}
On the other hand, for all $x\in \R^d$ with $|x|\leq r$,
\[
\vp(x)-V(t,x;\vp)\leq \frac\ep 2+2\|\vp\|_\infty\E(1_{\{|\sigma W(t)|>\delta\}})\leq \frac\ep 2+\frac{2\|\vp\|_\infty}\delta \sigma \sqrt{t}.
\]
We thus see, that $P_t\vp\to \vp$ uniformly on compact sets.

Now, let $R\geq 0$, $\ep>0$, and $\vp_1,\vp_2\in C_b(\R^d)$ with $\|\vp_i\|_\infty\leq R$, for $i=1,2$, and
\[
\sup_{|y|\leq r}|\vp_1(y)-\vp_2(y)|< \frac{\ep}{3}\quad \text{for sufficiently large }r>0.
\]
We observe that, for $\vp\in C_b(\R^d)$ with $\|\vp\|_\infty\leq R$, $t\geq 0$, $x\in \R^d$, and $\alpha\in \mathcal A$ with
\begin{equation}\label{eq.apriori-controlled-dynamics1}
	V(t,x;\vp)\leq \frac{\ep}3+\E\bigg(\vp\big(X^\alpha(t,x)\big)-\int_0^tg\big(\alpha(s)\big)ds\bigg),
\end{equation}
it follows that
\begin{equation}\label{eq.apriori-controlled-dynamics}
	\E\bigg(\int_0^t|\alpha(s)|ds\bigg)\leq  t\overline g^*(1)+\E\bigg(\int_0^tg\big(\alpha(s)\big)\bigg)\leq \frac{\ep}3+t\overline g^*(1)+2R
\end{equation}
Let $T,c\geq 0$. Then, for $t\in[0,T]$, $x\in \R^d$ with $|x|\leq c$, and $\alpha\in \mathcal A$ satisfying  Equation \eqref{eq.apriori-controlled-dynamics1} for $\vp=\vp_1$.
\begin{align*}
	V(t,x;\vp_1)-V(t,x;\vp_2)&\leq  \frac{2\ep}{3}+2R\E\big(1_{\{|X^{\alpha}(t,x)|>r\}}\big)
	\leq  \frac{2\ep}{3}+\frac{2R}{r}\E\big(\big|X^{\alpha}(t,x)\big|\big)\\
	&\leq \frac{2\ep}{3}+\frac{2R}{r} e^{CT} \Big(\frac{\ep}{3}+2R+c+\|\sigma\|\sqrt T+ T\big(C+\overline g^*(1)\big)\Big),
\end{align*}
where, in the last step, we used Equation \eqref{eq.integrability-controlled-dynamics} and Equation \eqref{eq.apriori-controlled-dynamics}.
Choosing $r>0$ sufficiently large, a symmetry argument yields that 
\[
\sup_{|x|\leq c}\big|V(t,x;\vp_1)-V(t,x;\vp_2)\big|<\ep\quad \text{ for all }t\in [0,T].
\]
With similar arguments together with It\^o's formula, one finds that the generator $L$ of $P$ on $C_b^2(E)$ is given by
\[
(L\vp)(x)=\frac{1}{2}{\rm tr} \big(\sigma^2 \nabla^2 \vp(x)\big)+ \sup_{a\in A} \Big(b(x,a) \nabla\vp(x)-g(a)\Big).
\]
By Theorem \ref{thm.viscosity}, we thus obtain that $(t,x)\mapsto V(t,x;\vp)=\big(P_t\vp\big)(x)$ is a $C_b^2(\R^d)$-viscosity solution to the HJB equation
\[
\partial_t v(t,x)=\frac{1}{2}{\rm tr}\big(\sigma^2\nabla_{xx}^2 v(t,x)\big)+ \sup_{a\in A} \Big(b(x,a) \nabla_xv(t,x)-g(a)\Big),\quad v(0,x)=\vp(x).
\]

\subsection{An infinite-dimensional example with linear growth}\label{Section7.2}

In this section, we consider a similar setup as in the previous subsection in a separable Hilbert space $H$ with orthonormal base $(e_k)_{k\in \N}\subset H$, endowed with the $bw$-topology. Throughout, let $W=(W(t))_{t\geq 0}$ be a Brownian Motion with trace class covariance operator $\Sigma\colon H\to H$ on a complete filtered probability space $\big(\Omega, \mathcal{F}, (\mathcal F_t)_{t\in [0,T]},  \mathbb{P}\big)$ satisfying the usual assumptions. For $p\in(1,2]$, we define the the set of admissible controls $\mathcal{A}$ as the set of all progressively measurable processes $\alpha\colon \Omega \times [0,T] \to U$ with
\[
\E\bigg(\int_0^t|\alpha_s|_H^pds\bigg)<\infty.
\]
For every admissible control $\alpha\in \mathcal A$ and every initial value $x\in H$, we consider the controlled dynamics
\begin{equation}
	\label{dynamics2} 
	X^{\alpha}(t,x)= x+\int_0^t\alpha(s)ds +W(t)\quad \text{for all } t\geq 0.
\end{equation}
We consider the weight function $\kappa(x):=(1+|x|_H)^{-1}$, for $x\in H$, and a running cost function $g\colon  H\to [0,\infty)$ with $g(0)=0$ and 
\[
\overline g_p^*(y):=\sup_{a\in H} \big(|a|_H^py-g(a)\big)<\infty
\]
for all $y\geq 0$. This implies that, for all $q\in [1,p]$ and $y\in \geq0$,
\[
\overline g_q^*(y):=\sup_{a\in H} \big(|a|_H^qy-g(a)\big)<\infty.
\]
For $\vp\in C_\kappa(H_{bw})$, we consider the value function
\[
V(t,x;\vp):=\sup_{\alpha\in \mathcal A}\mathbb E\bigg(\vp\big(X^{\alpha}(t,x)\big)-\int_0^tg\big(\alpha(s)\big)ds\bigg),
\]
and we define $\big(P_t\vp\big)(x):=V(t,x;\vp)$ for all $t\geq 0$ and $x\in H$. We first show that $P_t\colon  C_\kappa(H_{bw})\to  C_\kappa(H_{bw})$ is well-defined. Let $\vp\in C_b(H_{bw})$ such that,there exists a constant $L\geq 0$ and some $n\in \N$ with 
\be{eq.finitebased}
|\vp(x)-\vp(y)|\leq L \sum_{i=1}^n|\langle x-y,e_i\rangle|\quad\text{for all }x,y\in H.
\ee
Then, for all $t\geq 0$ and $x,y\in H$
\[
|V(t,x;\vp)-V(t,y;\vp)|\leq L \sum_{i=1}^n|\langle x-y,e_i\rangle|.
\]
Moreover, for all $q\in [1,p]$,
\[
\E\big(\big|X_t^{x,\alpha}\big|_H^p\big)^{1/q}\leq  |x|_H+\sqrt{\|\Sigma\|_{\rm tr} t}  +\E\bigg(\int_0^t|\alpha(s)|_H^qds\bigg)^{1/q}
\]
for all $t\geq 0$, $x\in H$, and $\alpha\in \mathcal A$. Using this estimate, we find that, for $t\geq 0$, $x\in H$, and $\vp\in C_\kappa(H_{bw})$,
\begin{align*}
	V(t,x;\vp)&\leq \|\vp\|_\kappa\Big(1+\E\big(\big|X^{\alpha}(t,x)\big|_H\big)\Big)-\E\bigg(\int_0^tg\big(\alpha(s)\big)\, {\rm d}s\bigg)\\
	&\leq \|\vp\|_\kappa  \big(1+|x|_H+\sqrt {\|\Sigma\|_{\rm tr} t}\big)+t\overline g_1^*\big(\|\vp \|_\kappa\big).
\end{align*}
Moreover, for all $t\geq 0$, $x\in H$, and $\vp\in C_\kappa(H_{bw})$,
\[
V(t,x;\vp)\geq - \|\vp\|_\kappa \big(1+|x|_H+\sqrt {\|\Sigma\|_{\rm tr} t}\big),
\]
which shows that
\[
\|P_t\vp\|_\kappa\leq \Big(\|\vp\|_\kappa +\overline g^*\big(\|\vp \|_\kappa\big)\Big)\big(1+t+\sqrt {\|\Sigma\|_{\rm tr}t}\big).
\]
Now, let $R\geq 0$, $\ep>0$, and $\vp_1,\vp_2\in C_\kappa(H_{bw})$ with $\|\vp_i\|_\kappa\leq R$, for $i=1,2$, and
\[
\sup_{|y|_H\leq r}|\vp_1(y)-\vp_2(y)|< \frac{\ep}{3}\quad \text{for sufficiently large }r\geq 1.
\]
We observe that, for $\vp\in C_\kappa(H_{bw})$ with $\|\vp\|_\kappa\leq R$, $t\geq 0$, $x\in \R^d$, and $\alpha\in \mathcal A$ with
\begin{equation}\label{eq.apriori-controlled-dynamics12}
	V(t,x;\vp)\leq \frac{\ep}3+\E\bigg(\vp\big(X^\alpha(t,x)\big)-\int_0^tg\big(\alpha(s)\big)ds\bigg),
\end{equation}
it follows that
\[
\E\bigg(\int_0^tg\big(\alpha(s)\big)\bigg)\leq \frac{\ep}3+4R\Bigg(\big(1+|x|_H+2\sqrt {\|\Sigma\|_{\rm tr} t}\big)^p+\E\bigg(\int_0^t|\alpha(s)|^pds\bigg)\Bigg),
\]
which implies that
\begin{equation}\label{eq.apriori-controlled-dynamics2}
	\E\bigg(\int_0^t|\alpha(s)|^pds\bigg)\leq \frac{\ep}3+4R\big(1+|x|_H+\sqrt {\|\Sigma\|_{\rm tr} t}\big)^p+t\overline g_p^*(1+4R)
\end{equation}
Let $T,c\geq 0$. Then, for $t\in[0,T]$, $x\in H$ with $|x|_H\leq c$, and $\alpha\in \mathcal A$ satisfying  Equation \eqref{eq.apriori-controlled-dynamics12} for $\vp=\vp_1$.
\begin{align*}
	V(t,x;\vp_1)-V(t,x;\vp_2)&\leq  \frac{2\ep}{3}+2R\E\Big(\big(1+|X^\alpha(t,x)|\big)1_{\{|X^{\alpha}(t,x)|>r\}}\Big)\\
	& \leq  \frac{2\ep}{3}+\frac{4R}{r^{p-1}}\E\big(\big|X^{\alpha}(t,x)\big|^p\big)\\
	&\leq \frac{2\ep}{3}+\frac{16(1+R)}{r^{p-1}} \Big(\frac{\ep}{3}+\big(1+c+\sqrt{\|\Sigma\|_{\rm tr} T}\big)^p+T\overline g^*(1+4R)\Big),
\end{align*}
where, in the last step, we used Equation \eqref{eq.apriori-controlled-dynamics2}.
Choosing $r>0$ sufficiently large, a symmetry argument yields that 
\[
\sup_{|x|_H\leq c}\big|V(t,x;\vp_1)-V(t,x;\vp_2)\big|<\ep\quad \text{ for all }t\in [0,T].
\]
Since the value function $V$ satisfies the dynamic programming principle, cf.\ Fabbri et al.\ \cite{Fabbri17}, the family $P=(P_t)_{t\geq 0}$ is a semigroup.

Let $\ep>0$, $\vp\in C_b(H_{bw})$ with \eqref{eq.finitebased} with $L\geq 0$ and $n\in \N$, and $t\geq 0$. Then, for all $x\in H$,
\begin{align*}
	V(t,x;\vp)-\vp(x)&\leq L \sqrt{\|\Sigma\|_{\rm tr}t}+\E\bigg( \int_0^t L|\alpha(s)|-g\big(\alpha(s)ds\big)\bigg)\\
	&\leq L \sqrt{\|\Sigma\|_{\rm tr}t}+t\overline g^*(L) \to 0\quad \text{as }t\to 0.
\end{align*}
Moreover, for all $x\in H$,
\[
\vp(x)-V(t,x;\vp)\leq L\sqrt{ \|\Sigma\|_{\rm tr}t}.
\]
In particular, since the bounded finitely based Lipschitz functions are dense in $C_\kappa(H_{bw})$, it follows that $P_t\vp\to \vp$ uniformly on compacts for all $\vp\in C_\kappa(H_{bw})$.

By It\^o's formula and Theorem \ref{thm.viscosity}, we obtain that $(t,x)\mapsto V(t,x;\vp)=\big(P_t\vp\big)(x)$ is a $C_b^2(H_{bw})$-viscosity solution to the HJB equation
\[
\partial_t v(t,x)=\frac{1}{2}{\rm tr}\big(\Sigma \nabla_{xx}^2 v(t,x)\big)+ \sup_{a\in A} \Big(\big\langle a, \nabla_xv(t,x)\big\rangle -g(a)\Big),\quad v(0,x)=\vp(x).
\]

\appendix
\section{Proof of part (c) of Remark \ref{rem2.2}}\label{app.A0}\noindent
Note first, that $E$ is a Hausdorff topological vector space and thus completely regular, see \cite[Theorem 2.9.2]{jarchow}.
We say that a set $B\subset E$ is  $bw$-closed if its intersection with every weak$^\star$-compact set is weak$^\star$-closed.
The corresponding topology is completely regular, and is  known as the $bw^{\ast}$-topology, \cite[pages 427-428]{DS} or \cite[Section 2.7]{Megginson}. Clearly, the $bw^{\ast}$-topology coincides with the weak$^\star$-topology on every weak$^\star$-compact set, and therefore weak$^\star$-compactness is equivalent to $bw^{\ast}$-compactness. As a consequence, any function $\vp\colon E\to\R$ that is continuous on all weak$^\star$-compacts of $E$ endowed with the weak$^\star$ topology is continuous on $E$ endowed with the $bw^{\ast}$-topology.  In fact, the $bw^{\ast}$-continuous functions are precisely the sequentially  weak$^\star$-continuous functions.\ Thus, part (3) of Hypothesis \ref{hyp_space} holds. Let us recall that a dual of an infinite-dimensional Banach space endowed with its weak$^\star$ topology is never a $k_f$-space, see \cite[Theorem 5.1]{Noble} and \cite[Corollary 1.14]{Gabriyelyan}.\\
To prove part (2) of Hypothesis \ref{hyp_space}, we note first that $\la E,bw^{\ast}\ra$ being a countable union of compact Polish spaces, is a Souslin space,
see Theorem 6.6.6 in vol. 2 of \cite{bogachev}. Then by Theorem 6.7.7 in vol. 2 of \cite{bogachev}  $\la E,bw^{\ast}\ra$ is a perfectly normal topological space,
hence $\mc B\la E,bw^{\ast}\ra=\mathrm{Ba}\la E,bw^{\ast}\ra$. 
	\\
	Since balls in $E$ equipped with the weak$^\star$-topology are metrizable, and weak$^\star$-compacts are norm-bounded, (1) of Hypothesis \ref{hyp_space} also holds.
\section{Some facts on the mixed topology}\label{app.A}
In this section we collect some general properties of the mixed topology that was introduced in Section \ref{section2} in a special case suitable for our purposes.

 We follow  \cite{wiweger}, and introduce the mixed topology $\tau^{\mc M}$ for a linear space $X$, endowed with two topologies $\tau_1$ and $\tau_2$. We assume that $\la X,\tau^1\ra $ and $\la X,\tau^2\ra$ are Hausdorff topological vector spaces with $\tau^1\subset \tau^2$ and corresponding bases $\mc U(\tau_1)$ and $\mc U(\tau_2)$ of neighbourhoods of zero. 
 For a sequence  $\gamma=\la U_n^1\ra\subset \mc U(\tau^1)$, and any $U^2\in\mc U(\tau^2)$, we define a set 
 \[U\la\gamma,U^2\ra=\bigcup_{n=1}^\infty\sum_{k=1}^n\la U_k^1\cap kU^2\ra.\]
 Then, the family 
 \[
 \Big\{U\la\gamma,U^2\ra\colon \gamma=\la U_n^1\ra\subset\mc U\la\tau^1\ra,\,U^2\in\mc U\la\tau^2\ra\Big\}
 \]
 forms a basis of neighborhoods of zero for a topology $\tau^{\mc M}=\tau^{\mc M}\la\tau^1,\tau^2\ra$. Then $(X,\tau^{\mc M})$ is a Hausdorff topological vector space, and the topology $\tau^{\mc M}$ is known as the mixed topology. In the present paper, we use this definition only in the case, where $X=C_\kappa(E)$ with a completely regular topological Hausdorff space $E$, $\tau_1=\tau_\kappa^{\mc C}$, and $\tau_2=\tau_\kappa^{\mc U}$, cf.\ Section \ref{section2} for the notations.
 We list some basic properties of the mixed topology in this case. 

Recall that a subset of a locally convex space is bounded if it is absorbed by every neighbourhood of zero.

\begin{proposition}\label{t12}\
	\begin{enumerate}
		\item[(a)] (\cite[Section 2.2]{wiweger}) The topology $\tau_\kappa^{\mc M}$ is the strongest locally convex topology on $C_\kappa (E)$ that coincides with $\tau_\kappa^{\mc C}$ on bounded sets of $\tau_\kappa^{\mc U}$.
		\item[(b)] (\cite[Corollary on p.\ 56]{wiweger}) A set $B\subset C_\kappa(E)$ is bounded in the 
		topology $\tm$ if and only if it is bounded in the 
		topology $\tu$.
		\item[(c)] (\cite[Corollary 2.2.5]{wiweger}) The topology $\tau_\kappa^{\mc M}$ can be defined as the weakest topology $\tau$ on $C_\kappa(E)$ such that for every locally convex space $F$ and every linear operator $T\colon C_\kappa(E)\to F$, $T$ is $\tau$-continuous if and only if $T$ is $\tau_\kappa^{\mc C}$-continuous on $\tu$-bounded sets.
	\end{enumerate} 
\end{proposition}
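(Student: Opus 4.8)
The plan is to deduce all three assertions from their counterparts for the unweighted space $C_b(E)$ --- which are precisely the results of Wiweger quoted in the statement --- by transporting them along a canonical linear isomorphism, in the same spirit as the proof of Lemma \ref{lem_aa1}. To this end I would introduce the multiplication operator
\[
\Theta\colon C_b(E)\longrightarrow C_\kappa(E),\qquad \Theta\varphi:=\kappa^{-1}\varphi,
\]
a linear bijection with inverse $\psi\mapsto\kappa\psi$. The decisive observation, resting only on $\kappa(x)\kappa(x)^{-1}=1$, is that $\Theta$ transports each of the three relevant seminorm families \emph{exactly}: for every compact $C\subset E$, every sequence $(C_n)$ of compact subsets of $E$, and every $0<a_n\to 0$,
\[
p_{\kappa,C}(\Theta\varphi)=p_C(\varphi),\qquad \|\Theta\varphi\|_\kappa=\|\varphi\|_\infty,\qquad p_{\kappa,(C_n),(a_n)}(\Theta\varphi)=p_{1,(C_n),(a_n)}(\varphi),
\]
where $p_{1,(C_n),(a_n)}$ is the seminorm on $C_b(E)$ defined as $p_{\kappa,(C_n),(a_n)}$ but with weight $1$. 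Consequently $\Theta$ is simultaneously an isometric isomorphism $(C_b(E),\tau_1^{\mathscr U})\to(C_\kappa(E),\tu)$, a homeomorphism $(C_b(E),\tau_1^{\mathscr C})\to(C_\kappa(E),\tc)$, and a homeomorphism $(C_b(E),\tau_1^{\mathscr M})\to(C_\kappa(E),\tm)$; in particular $\Theta$ and $\Theta^{-1}$ carry bounded sets (for any of these topologies) onto bounded sets.

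Granting this, each part follows at once. For (b): a set $B\subset C_\kappa(E)$ is $\tm$-bounded iff $\Theta^{-1}B$ is $\tau_1^{\mathscr M}$-bounded, which by \cite[Corollary on p.\ 56]{wiweger} is equivalent to $\Theta^{-1}B$ being $\tau_1^{\mathscr U}$-bounded, i.e.\ to $B$ being $\tu$-bounded. For (a): a locally convex topology $\tau$ on $C_\kappa(E)$ agrees with $\tc$ on $\tu$-bounded sets iff the transported topology $\Theta^{-1}(\tau)$ on $C_b(E)$ agrees with $\tau_1^{\mathscr C}$ on $\tau_1^{\mathscr U}$-bounded sets; since by \cite[Section 2.2]{wiweger} $\tau_1^{\mathscr M}$ is the strongest such topology on $C_b(E)$ and $\Theta$ is a homeomorphism sending $\tau_1^{\mathscr M}$ to $\tm$, maximality of $\tm$ transfers. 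For (c): given a locally convex space $F$ and a linear operator $T\colon C_\kappa(E)\to F$, the operator $T$ is $\tc$-continuous on $\tu$-bounded sets iff $T\circ\Theta$ is $\tau_1^{\mathscr C}$-continuous on $\tau_1^{\mathscr U}$-bounded sets, and $T$ is $\tau$-continuous iff $T\circ\Theta$ is $\Theta^{-1}(\tau)$-continuous; hence \cite[Corollary 2.2.4]{wiweger} applied to $T\circ\Theta$ yields the claim for $T$.

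An entirely equivalent route would be to repeat Wiweger's three proofs with the weight $\kappa$ carried through, exactly as for Lemma \ref{lem_aa1}; the displayed seminorm identities show this amounts to no more than bookkeeping. I do not expect a genuine obstacle here: the only point that needs (the trivial) verification is that $\Theta$ respects all three seminorm systems simultaneously, which is precisely the second display above.
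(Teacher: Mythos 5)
Your proposal is correct, and it is worth noting that the paper itself offers no proof of Proposition \ref{t12}: each part is simply attributed to Wiweger, implicitly meaning that his general results on the mixed topology $\tau^{\mc M}(\tau^1,\tau^2)$ are applied to the pair $(\tc,\tu)$ on $C_\kappa(E)$ and combined with Lemma \ref{lem_aa1}. Your route instead applies Wiweger only in the unweighted case $\kappa\equiv 1$ and transports the three statements along the multiplication map $\Theta=\mathcal I_\kappa$; this is precisely the mechanism the paper uses elsewhere (Lemma \ref{iso}, Theorem \ref{Stone}, Theorem \ref{th_md}), so the two routes are close in spirit and both legitimate. One point deserves emphasis in your favour: the paper's own proof of Lemma \ref{iso} for the mixed topology invokes Proposition \ref{t12}(a) and (c), so if you had simply quoted Lemma \ref{iso} for the $\tau^{\mc M}$-homeomorphism you would have been circular. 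You avoid this because you verify the identity $p_{\kappa,(C_n),(a_n)}(\Theta\varphi)=p_{1,(C_n),(a_n)}(\varphi)$ directly from $\kappa\kappa^{-1}=1$, which gives the $\tau_1^{\mc M}$-to-$\tm$ homeomorphism independently of Proposition \ref{t12}; keeping that computation explicit is essential and you have done so. The only residual caveat is bibliographic rather than mathematical: Wiweger's cited corollaries are stated for the abstract neighbourhood-base construction, so even in the case $\kappa\equiv1$ you still need his Theorem 3.1.1 (i.e.\ Lemma \ref{lem_aa1} with $\kappa=1$) to identify that construction with the seminorm-defined $\tau_1^{\mc M}$ before your seminorm identities can be brought to bear; this is implicit in your write-up and should be said once.
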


\begin{proposition}\label{t14}(Theorem 2.3.1 in \cite{wiweger})
	A sequence $\left(\phi_n\right)\subset C_\kappa(E)$ is $\tm$-convergent to 
	$\phi\in C_\kappa(E)$ if and only if
	\[\sup_{n\ge 1}\left\|\phi_n\right\|_\kappa<\infty\quad \text{and}\quad \lim_{n\to\infty}\phi_n=\phi\quad\mbox{\rm in the topology }\tau_\kappa^{\mc C}.\]
\end{proposition}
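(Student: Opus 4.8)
The plan is to prove the two implications separately, drawing only on the inclusions $\tc\subset\tm\subset\tu$ recalled in Section~\ref{section2}, on Proposition~\ref{t12}(b), and on the elementary fact that a convergent sequence in a topological vector space is bounded (which follows directly from the properties (i)--(v) of neighbourhood bases listed in Appendix~\ref{app.A}).

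For the easy direction, suppose $\phi_n\to\phi$ in $\tm$. Being $\tm$-convergent, the sequence $(\phi_n)$ is $\tm$-bounded, hence $\tu$-bounded by Proposition~\ref{t12}(b); this is precisely the assertion $\sup_{n\ge 1}\|\phi_n\|_\kappa<\infty$. And since $\tc\subset\tm$, convergence in $\tm$ immediately forces convergence in $\tc$. So here nothing is needed beyond quoting these two facts.

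For the substantive direction, I would assume $M:=\sup_{n\ge 1}\|\phi_n\|_\kappa<\infty$ (noting $\|\phi\|_\kappa<\infty$ as well, since $\phi\in\ck$) and $\phi_n\to\phi$ in $\tc$, then replace $\phi_n$ by $\phi_n-\phi$ to reduce to the case $\phi=0$, with $\|\phi_n\|_\kappa\le M$ for a possibly larger constant $M$ and $\phi_n\to 0$ in $\tc$. The goal then becomes $p_{\kappa,(C_k),(a_k)}(\phi_n)\to 0$ for every generating seminorm of $\tm$, i.e.\ for every sequence $(C_k)$ of compacts and every $0<a_k\to 0$. Fixing such a seminorm and $\varepsilon>0$, the key step is a two-regime estimate: choose a threshold $N$ with $a_kM\le\varepsilon/2$ for all $k>N$ (possible since $a_k\to 0$), so that on this tail $a_k\,p_{\kappa,C_k}(\phi_n)\le a_k\|\phi_n\|_\kappa\le\varepsilon/2$ \emph{uniformly in $n$}; for the finitely many indices $k\le N$ pass to the single compact set $C:=\bigcup_{k\le N}C_k$ and bound $a_k\,p_{\kappa,C_k}(\phi_n)\le(\max_{k\le N}a_k)\,p_{\kappa,C}(\phi_n)$, which tends to $0$ as $n\to\infty$ because $\phi_n\to 0$ in $\tc$. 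Taking the supremum over all $k$ then gives $p_{\kappa,(C_k),(a_k)}(\phi_n)\le\varepsilon/2<\varepsilon$ for all large $n$, as desired.

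I do not expect a genuine obstacle: the only point requiring care is the order of quantifiers — $N$ must be fixed from $\varepsilon$ and $M$ \emph{before} invoking $\tc$-convergence on $C=\bigcup_{k\le N}C_k$, so that the tail estimate for $k>N$ is uniform in $n$ while the finite head $k\le N$ is dispatched by a single use of $\tc$-convergence. One should also record at the outset that $\phi\in\ck$ guarantees $\|\phi\|_\kappa<\infty$, so the reduction to $\phi=0$ preserves a uniform $\|\cdot\|_\kappa$-bound on the sequence.
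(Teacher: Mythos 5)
Your proof is correct, but it takes a more self-contained route than the paper in both directions. For the necessity of the norm bound, the paper argues by contradiction: if $\sup_n\|\phi_n\|_\kappa=\infty$ it picks points $x_n$ with $\kappa(x_n)|\phi_n(x_n)|$ large and builds the single seminorm $p_{\kappa,(\{x_n\}),(1/n)}$ to violate $\tm$-convergence; you instead quote the general fact that a convergent sequence in a topological vector space is bounded and then pass from $\tm$-boundedness to $\tu$-boundedness via Proposition~\ref{t12}(b). For the sufficiency, the paper simply cites Lemma~\ref{lem_aa1} together with Proposition~\ref{t12}(a) (that $\tm$ coincides with $\tc$ on norm-bounded sets), so that a norm-bounded, $\tc$-convergent sequence is automatically $\tm$-convergent; your two-regime estimate on a generating seminorm $p_{\kappa,(C_k),(a_k)}$ — tail $k>N$ controlled uniformly in $n$ by $a_kM\le\varepsilon/2$, head $k\le N$ dispatched by $\tc$-convergence on the compact $\bigcup_{k\le N}C_k$ — in effect re-proves the sequential content of Proposition~\ref{t12}(a) directly from Definition~\ref{t11}. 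What your approach buys is independence from the cited Wiweger results (only the definition of the seminorms and the inclusion $\tc\subset\tm\subset\tu$ are used, plus Proposition~\ref{t12}(b) for boundedness); what the paper's approach buys is brevity, at the cost of leaning on the external characterization of the mixed topology. Your quantifier bookkeeping (fixing $N$ from $\varepsilon$ and $M$ before invoking $\tc$-convergence) is exactly the point that needs care, and you handle it correctly.
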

The next theorem is well known for the case $\kappa\equiv 1$, see \cite{wiweger}. It seems to be new for $\kappa$ satisfying Hypothesis \ref{hyp-kappa}. From now on we assume that Hypothesis \ref{hyp-kappa} holds.
 \bt{lem_ck_mixed}
The family of seminorms 
\[\{p_{\kappa,(a_n),(C_n)};\,0<a_n\to 0,\,\, C_n\,\,\mathrm{compact}\}\] 
generates the mixed topology $\tm$ on $\ck$. 
 \et
\begin{proof}
 The lemma was proved for $\kappa=1$ on pages 65-66 of \cite{wiweger}.
 For general $\kappa$ we need to check that the assumptions of Theorem 3.1.1 in \cite{wiweger} are satisfied.
 We first note that 
 \[\|\vp\|_\kappa=\sup_Cp_{\kappa,C}(\vp)\,,\]
 where the supremum is taken over all compact subsets of $E$. It remains to show that condition (r) from \cite{wiweger} holds as well.
 Let $\ve>0$ and let $\vp\in\ck$. For any $n\ge 1$ consider compact sets $C_i\subset E$, $i=1,\ldots, n$, and define sets $C=\bigcup_{k=1}^nC_k$ and 
 \[U=\{x\in E;\,|\kappa(x)\vp(x)|<p_{\kappa,C}(\vp)+\ve\}\,.\]
 Then $C\subset U$, $U$ is open and 
\be{eq_xxx}
\sup_{x\in U}|\kappa(x)\vp(x)|\le p_{\kappa,C}(\vp)+\ve=\max\la p_{\kappa,C_1}(\vp),\ldots,p_{\kappa,C_n}(\vp)\ra+\ve.
\ee
Let $f:E\to[0,1]$ be a continuous function, such that $f(x)=0$ for $x\in C$ and $f(x)=1$ for $x\in E\setminus U$. 
For any $\vp\in\ck$ we have 
\[\vp=(1-f)\vp+f\vp=\psi+\eta\,.\]
Then we have $\eta(x)=0$ for $x\in C$ or, equivalently
\[p_{\kappa,C_i}( \eta)=0,\quad i=1,\ldots, n\,.\]
Next, using \eqref{eq_xxx} we obtain 
\[\|\psi\|_\kappa=\sup_{x\in E}|\kappa(x)\psi(x)|=\sup_{x\in U}|\kappa(x)\psi(x)|\le \max\la p_{\kappa,C_1}(\vp),\ldots,p_{\kappa,C_n}(\vp)\ra+\ve,\]
and condition (r) holds. Now, the theorem follows immediately from Theorem 3.1.1 in \cite{wiweger}. 
\end{proof}
\begin{corollary}\label{cor_nowa}
The mixed topology $\tm$ is also determined by the family of seminorms 
$\{p_{\kappa,(a_n),(C_n)};\,0<a_n\to 0,\,\, C_n\,\,\mathrm{compact}\}$ with $a_n\downarrow 0$ and $C_n\subset C_{n+1}$. 
\end{corollary}
\begin{proof}
We have for all $\vp\in C_\kappa(E)$ and $\tilde C_n := \bigcup_{l=1}^n C_l$
\[\sup_{n\in\N}a_n p_{\kappa,C_n}(\vp)\leq\sup_{n\in\N}\Big(\sup_{k\geq n} a_k \Big)p_{\kappa,\tilde C_n}(\vp)\]
\end{proof}
Let $\mc W$ denote the class of all bounded upper semicontinuous functions $w:E\to[0,\infty)$ such that for every $\ve>0$ the set $\{x\in E;\, w(x)\ge\ve\}$ is compact.
For every $w\in\mc W$ define 
\[p_{w}(\vp)=\sup_{x\in E}|w(x)\kappa(x)\vp(x)|,\quad \vp\in\ck\,.\]
Let $\tau_{\kappa}^{\mc W}$ denote the locally convex topology on $\ck$ defined by the family of seminorms $\{p_w;\,w\in\mc W\}$. We aim to prove the following 
\bt{th_w}
We have 
\[\tau_\kappa^{\mc W}=\tau^{\mc M}_\kappa\,.\]
\et
\begin{proof}
Since $I_C\in\mc W$ for every compact $C\subset E$, the topology $\tau_{\kappa}^{\mc W}$ is stronger than $\tau_{\kappa}^{\mc C}$.
For $B_\kappa=\{\vp\in\ck;\, \|\vp\|_\kappa\le 1\}$, $\ve>0$ and $w\in\mc W$ define  $U=\{\vp\in B_\kappa;\,\|w\kappa\vp\|_\infty<\ve\}$.
Let $K=\{x\in E;\, w(x)\ge\frac{\ve}{2}\}$ and 
\[V=\left\{\vp\in B_\kappa;\, p_{\kappa,K}(\vp) <\frac{\ve}{2(1+\|w\|_\infty)}\right\}\in\tau_\kappa^{\mc C} \cap B_\kappa\,.\]
If $\vp\in V$, then 
\[\|\kappa w\vp\|_\infty=\max\la p_K(\kappa w\vp),p_{E\setminus K}(\kappa w\vp)\ra\le\max\la \|w\|_\infty\frac{\ve}{2(1+\|w\|_\infty)},\frac{\ve}{ 2}\|\vp\|_{\kappa}\ra=\frac{\ve}{ 2} <\ve.
\]
Therefore, $V\subset U$ and on $B_\kappa$ we have $\tau_\kappa^{\mc W}\subset\tau_\kappa^{\mc C}$, hence $\tau_\kappa^{\mc W}=\tau_\kappa^{\mc C}$ on $B_\kappa$.
By Proposition \ref{t12} (a) we hence have $\tau_\kappa^{\mc W}\subset\tm$. \\
Let $U=\{\vp\in\ck;\,p_{\kappa,(a_n),(C_n)}(\vp)< 1\}$. By Corollary \ref{cor_nowa} we can assume that $C_n\subset C_{n+1}$ and $a_n\downarrow 0$.
Define $w$ as
\[w(x)=\left\{\begin{array}{lll}
a_1&x\in C_1,&\\
a_n&x\in C_n\setminus C_{n-1},&n\ge 2,\\
0&x\in E\setminus\bigcup_{n=1}^\infty C_n
\end{array}\right.
\]
Clearly, $w\in\mc W$ and $\{\vp\in C_\kappa(E) : \|w\kappa\vp\|_\infty <1\}\subset U$.
Hence Theorem \ref{lem_ck_mixed} and Proposition \ref{t12} imply $\tau_\kappa^{\mc M}\subset \tau_\kappa^{\mc W}$.
\end{proof}
\begin{theorem}\label{cor_comp}
	If Hypothesis \ref{hyp_space} holds, then the space $\left(C_\kappa(E),\tm\right)$ is complete. 
\end{theorem}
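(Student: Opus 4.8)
The plan is to reduce to the unweighted case and then show by hand that every Cauchy net converges. By Lemma~\ref{iso} the multiplication map $\mathcal I_\kappa\colon C_b(E)\to\ck$ is a linear homeomorphism for the three pairs of topologies, so it suffices to prove that $\bigl(C_b(E),\tau_1^{\mc M}\bigr)$ is complete. Let $(\varphi_\alpha)_{\alpha\in A}$ be a $\tau_1^{\mc M}$-Cauchy net. Since $\tau_1^{\mc C}\subset\tau_1^{\mc M}$, the net is Cauchy for uniform convergence on compacta; as $C(K)$ with the sup-norm is complete for every compact $K\subset E$, the restrictions $\varphi_\alpha|_K$ converge uniformly to some $g_K\in C(K)$, and setting $\varphi(x):=\lim_\alpha\varphi_\alpha(x)$ one gets $\varphi|_K=g_K$, so $\varphi$ is continuous on every compact subset of $E$. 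Here the single essential use of Hypothesis~\ref{hyp_space}(3) enters: the $k_{\R}$-property forces $\varphi\in C(E)$. It then remains to check (a) that $\varphi$ is bounded, i.e.\ $\varphi\in C_b(E)$, and (b) that $\varphi_\alpha\to\varphi$ in $\tau_1^{\mc M}$, not merely in $\tau_1^{\mc C}$.

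For (a), suppose $\|\varphi\|_\infty=\infty$ and pick $x_n\in E$ with $|\varphi(x_n)|\ge 4^n$. The seminorm $q:=p_{1,(\{x_n\}),(2^{-n})}$ (singletons are compact and $2^{-n}\to 0$) is one of the seminorms defining $\tau_1^{\mc M}$, so the Cauchy property gives $q(\varphi_\alpha)\le Q$ for all $\alpha$ in some tail. On the other hand, fix $n$ with $2^{n-1}>Q$; since $\varphi_\alpha(x_n)\to\varphi(x_n)$, there is $\alpha$ in that tail with $|\varphi_\alpha(x_n)|\ge\tfrac12\cdot 4^n$, whence $q(\varphi_\alpha)\ge 2^{-n}\cdot\tfrac12\cdot 4^n=2^{n-1}>Q$, a contradiction. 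Hence $\varphi\in C_b(E)$.

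For (b), replacing $\varphi_\alpha$ by $\varphi_\alpha-\varphi$ we may assume $\varphi=0$: thus $\varphi_\alpha\to 0$ pointwise and uniformly on compacta, and $(\varphi_\alpha)$ is still $\tau_1^{\mc M}$-Cauchy. It suffices to show $p(\varphi_\alpha)\to 0$ for each seminorm $p=p_{1,(C_n),(a_n)}$, and (replacing $C_n$ by $\bigcup_{k\le n}C_k$) we may take $(C_n)$ increasing. Since $|p(\varphi_\alpha)-p(\varphi_\beta)|\le p(\varphi_\alpha-\varphi_\beta)\to 0$, the reals $p(\varphi_\alpha)$ converge to some $L\ge 0$; assume $L>0$. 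For $\alpha$ in a tail there is then $n_\alpha\in\N$, and by compactness of $C_{n_\alpha}$ a point $y_\alpha\in C_{n_\alpha}$, with $a_{n_\alpha}|\varphi_\alpha(y_\alpha)|=a_{n_\alpha}\sup_{C_{n_\alpha}}|\varphi_\alpha|>L/2$. For fixed $N$ one has $\sup_{n\le N}a_n\sup_{C_n}|\varphi_\alpha|\le(\max_{n\le N}a_n)\sup_{C_N}|\varphi_\alpha|\to 0$, so eventually this is $<L/2$, forcing $n_\alpha>N$; hence $n_\alpha\to\infty$, so $a_{n_\alpha}\to 0$ and $|\varphi_\alpha(y_\alpha)|>L/(2a_{n_\alpha})\to\infty$. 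Now consider the ``square-root'' seminorm $p':=p_{1,(C_n),(\sqrt{a_n})}$, which again defines $\tau_1^{\mc M}$; then
\[
p'(\varphi_\alpha)\ \ge\ \sqrt{a_{n_\alpha}}\,|\varphi_\alpha(y_\alpha)|\ >\ \sqrt{a_{n_\alpha}}\cdot\frac{L}{2a_{n_\alpha}}\ =\ \frac{L}{2\sqrt{a_{n_\alpha}}}\ \longrightarrow\ \infty,
\]
contradicting that $(\varphi_\alpha)$ is $p'$-Cauchy, which forces $p'(\varphi_\alpha)$ to converge to a finite limit. Hence $L=0$, so $p(\varphi_\alpha)\to 0$; as $p$ was arbitrary, $\varphi_\alpha\to\varphi$ in $\tau_1^{\mc M}$, and transporting back through $\mathcal I_\kappa$ completes the argument.

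The main obstacle — and the reason the statement is not completely routine — is that, unlike in a Banach or metrizable space, a $\tau_1^{\mc M}$-Cauchy net need not be $\|\cdot\|_\infty$-bounded, so one cannot first bound the net and then invoke Proposition~\ref{t14}. The device in steps (a) and (b) of passing from $(a_n)$ to the slowly varying weights $(2^{-n})$, resp.\ $(\sqrt{a_n})$, is precisely what circumvents this: it converts a hypothetical ``escape to infinity in norm'' into a failure of the Cauchy condition for a suitably chosen coarser seminorm. (One may of course also simply invoke Sentilles, \cite[Theorem~7.1]{sentilles}, of which this is the special case where $E$ is, in the terminology of that paper, a $k_{\R}$-space.)
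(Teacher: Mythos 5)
Your argument is correct, and it is genuinely different from what the paper does: the paper offers no proof at all for this statement, merely citing \cite[Theorem 7.1]{sentilles}, which establishes completeness of the strict topology on $C_b(X)$ for $k_{\R}$-spaces $X$ in the general framework of topological measure theory. You instead give a direct, self-contained proof working only with the defining seminorms $p_{1,(C_n),(a_n)}$ of Definition \ref{t11} (after the standard reduction to $\kappa\equiv 1$ via Lemma \ref{iso}). I checked the two delicate points and both hold: in (a), Cauchyness for a single seminorm $q$ does give a tail bound $q(\varphi_\alpha)\le q(\varphi_{\alpha_0})+1$, and the singleton seminorm $p_{1,(\{x_n\}),(2^{-n})}$ is a legitimate member of the defining family, so the contradiction with $|\varphi(x_n)|\ge 4^n$ is valid; in (b), the reverse triangle inequality makes each $p(\varphi_\alpha)$ a Cauchy net of reals, the forced escape $n_\alpha\to\infty$ follows from uniform convergence to $0$ on each fixed $C_N$, and the passage to $p_{1,(C_n),(\sqrt{a_n})}$ (still an admissible seminorm since $\sqrt{a_n}\to 0$) turns the hypothetical blow-up $|\varphi_\alpha(y_\alpha)|>L/(2a_{n_\alpha})$ into unboundedness of a convergent net of reals. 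This weight-rescaling device is exactly what lets you avoid the (false in general for nets) claim that a $\tau_1^{\mc M}$-Cauchy net is norm-bounded, which would otherwise be the tempting but unavailable shortcut via Proposition \ref{t14}. What your approach buys is independence from the external reference and transparency about where Hypothesis \ref{hyp_space}(3) is used (continuity of the pointwise limit); what the citation buys is brevity and the full generality of Sentilles' result. Either is acceptable; yours could serve as a proof of Theorem \ref{cor_comp} as stated.
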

\begin{proof} 
The proof is a modification of the proof of a similar result in \cite{gk}. Let 
$\left(\vp_{\alpha}\right)\subset\left(\ck,\tm\right
)$ be a Cauchy net. Then for every seminorm 
$p_{\kappa,\left(C_n\right),\left(a_n\right)}$ and $\ve >0$ 
there exists $\alpha_0$ such that for all $\alpha ,\beta\ge\alpha_
0$ 
\[p_{\kappa,\left(C_n\right),\left(a_n\right)}\left(\vp_{\alpha}-\vp_{\beta}\right
)<\ve\,,\]
hence the net $\left(\vp_{\alpha}\right)$ is pointwise convergent to a certain 
\[\vp=\lim_{\alpha}\vp_{\alpha},\]
and we find that $p_{\kappa,\left(C_n\right),\left(a_n\right)}\left(\vp_{
\alpha}-\vp\right)<\epsilon$ for $\alpha$ 
sufficiently large.
In particular, $\vp_\alpha -\vp\in C_\kappa(E)$, hence so is $\vp$.
 Since $p_C\la \vp_\alpha-\vp\ra\to 0$ for every compact $C\subset E$, we find that $\vp$ is continuous on every compact, hence continuous on $E$ by Hypothesis \ref{hyp_space}  and finally $\vp\in\ck$. 
\end{proof}
 Recall from Section 2 that $M_\kappa(E)$ denotes the space of all Radon measures $\mu$ on $E$ such that 
  \[\|\mu\|_\kappa=\int_E\frac{|\mu|(dx)}{\kappa(x)}<\infty\,,\]
  and we write $M_b(E)$ if $\kappa\equiv 1$.
{
\bt{th_md}\label{thm_B9} 
  Assume that Hypotheses \ref{hyp_space} and \ref{hyp-kappa} hold. Then we have $\la\ck,\tm\ra^\star=M_\kappa(E)$.
  Furthermore, let $\ell\in\left(C_\kappa(E),\tau^{\mc M}\right)^\ast$ with corresponding $\mu_\ell\in M_\kappa(E),K_m\subset E, m\in\N$, compact sets
 and $(0,\infty)\ni a_m\to 0$ such that
 \[ |\ell(\vp)| \leq p_{\kappa,(K_m),(a_m)}(\vp) \quad\text{for all }\vp\in C_\kappa(E).\]
 Then
 \begin{align}\label{eq:b1_prime}
 \int_E |\vp| \text d|\mu_\ell|\leq 2\sup\{|\ell(g)|:g\in C_\kappa(E), 0\leq g\leq |\vp|\} \leq 2 p_{\kappa,(K_m),(a_m)}(\vp) \quad\text{for all }\vp\in C_\kappa(E). 
 \end{align}
\et

\begin{proof}
Let $\mu\in M_\kappa(E)$.
Then obviously $C_\kappa(E)\subset L^1(E,|\mu|)$.
Define $\ell : C_\kappa(E) \to\R$ by
\[ \ell(\vp) := \int_E\vp\;\text d\mu,\ \vp\in C_\kappa(E). \]
Since $|\mu|$ is Radon, there exist compact $K_n\subset E, n\in\N$, such that
\begin{align}\label{eq:b2}
\int_{E\setminus K_n} \frac{|\mu|(\text dx)}{\kappa(x)} \leq \frac{1}{n+1} 2^{-n},\ n\in\N,
\end{align}
in particular, $|\mu|(E\setminus\bigcup_{n=1}^\infty K_n)=0$.
Define $b_n :=\frac 1n, n\in\N$, and let $\vp\in C_\kappa(E)$ such that
\[ p_{\kappa,(K_n),(b_n)}(\vp) = 1.\]
Then
\[ p_{\kappa,K_n}(\vp)\leq n \quad\text{for all }n\in\N\]
and thus by \eqref{eq:b2}
\begin{align*}
|\ell(\vp)| \leq \int |\vp| \;\text d|\mu| \leq \ &p_{\kappa,K_1}(\vp)\,\int_{K_1} \frac{|\mu|(\text dx)}{\kappa(x)} +\\
												& +\sum_{n=1}^\infty p_{\kappa,K_{n+1}}(\vp)\,\int_{K_{n+1}\setminus K_n} \frac{|\mu|(\text dx)}{\kappa(x)}\\
												& \leq\int_E \frac{|\mu|(\text dx)}{\kappa(x)} +1 < \infty.
\end{align*}
Hence, $\ell\in\left(C_\kappa(E),\tau^{\mc M}_\kappa\right)^\ast$.\\
Conversely, let $\ell\in\left(C_\kappa(E),\tau^{\mc M}\right)^\ast$.
Then there exist increasing compacts $K_m\subset E, m\in\N$, and $(0,\infty)\ni a_m\downarrow 0$ such that
\begin{align}\label{eq:b3}
|\ell(\vp)|\leq p_{\kappa,(K_m),(a_m)}(\vp)\quad \text{for all }\vp\in C_\kappa(E).
\end{align}
First assume that $\ell\geq 0$, i.e. $\ell(\vp)\geq 0\ \forall\vp\in C_\kappa(E),\ \vp\geq 0$.
Recall that $C_\kappa(E) = C(E) \cap \frac 1\kappa B_b(E)$; so $\ck$ is trivially a Stone lattice,
and since for all $h\in C_b(E),\ h\geq 0$,
\[h = \lim\limits_{n\to\infty} \frac {1}{\ka_1} (\ka_1\cdot h\wedge n) \text{ pointwise on }E,\]
Hypothesis \ref{hyp_space} (2) implies that $\mc B(E) = \sigma(C_b(E))\subset \sigma(C_\ka(E))\subset \mc B(E)$.
Furthermore $\ell$ is Daniell continuous on $\ck$,
since for $\vp_n\in\ck$, $n\in\N$, $\vp_n\downarrow0$ pointwise on $E$,
we have by Dini's theorem that $\ka\vp_n\to 0$ uniformly on compacts $C\subset E$.
Therefore, by Proposition \ref{t14} we have $\vp_n\to 0$ in $\tm$,
and thus $\ell(\vp_n)\to 0$ as $n\to\infty$.
So, by the Daniell-Stone theorem (see \cite[Satz 39.4]{bauer}) there exists a unique nonnegative measure $\mu_\ell$ on $\la E,\mathscr B(E)\ra$ such that $\ck\subset L^1(E,\mu_\ell)$ and
\begin{align}\label{eq:b4}
\ell(\vp) = \int_E \vp\; \text d\mu_\ell \quad \text{for all }\vp\in \ck.
\end{align}
Furthermore, by Hypothesis \ref{hyp-kappa} and \eqref{eq:b3}
\begin{align}\label{eq:b5}
\int_E \frac 1\ka \;\text d\mu_\ell = \lim\limits_{j\to\infty} \int_E \frac {1}{\ka_j} \;\text d\mu_\ell
\leq \lim\limits_{j\to\infty} p_{\kappa,(K_m),(a_m)}\bigg(\frac {1}{\ka_j}\bigg) \leq \sup_{m\in\N} a_m < \infty,
\end{align}
since $\frac {1}{\ka_j} \in\ck$ for all $j\in\N$.
It remains to show that $\mu_\ell$ is Radon, equivalently $\frac 1\ka \mu_\ell$ is Radon.
Since by \eqref{eq:b5} and \cite[Satz 40.6]{bauer} we have for all $B\in \mc B(E) = \sigma(C_b(E))$
\begin{align}\label{eq:b6}
\la \frac 1\ka \mu_\ell\ra (B) = \sup \left\{\la \frac 1\ka \mu_\ell\ra (A) \mid A\subset B, A\text{ closed}\right\},
\end{align}
it suffices to prove that
\[\lim\limits_{n\to\infty}\la \frac 1\ka \mu_\ell\ra \la E\setminus K_n\ra =0.\]
But by \eqref{eq:b6} there exist closed $A_n\subset E\setminus K_n, A_{n+1}\subset A_n$, and
\begin{align}\label{eq:b6'}
\la \frac 1\ka \mu_\ell\ra \la(E\setminus K_n)\setminus A_n\ra < \frac 1n,\ n\in\N.
\end{align}
Since $E$ is completely regular there exist $f_{A_n}\in C_b(E), 1\geq f_{A_n}\geq 0$,
such that $f_{A_n} = 1$ on $A_n$ and $f_{A_n} = 0$ on $K_n, n\in\N$.
Hence, since $\frac{1}{\ka_j} f_{A_n} \in\ck, j\in\N$, it follows by \eqref{eq:b3}, \eqref{eq:b4} and \eqref{eq:b6'} that
\begin{align*}
\la \frac 1\ka \mu_\ell\ra \la E\setminus K_n\ra &\leq \frac 1n + \la \frac 1\ka \mu_\ell\ra \la A_n\ra\\
												& \leq\frac 1n + \lim\limits_{j\to\infty} p_{\kappa,(K_m),(a_m)}\la\frac{1}{\ka_j}  f_{A_n}\ra\\
												&= \frac 1n + \sup_{m> n} a_m p_{K_m}\la \frac{\ka}{\ka_j} f_{A_n}\ra \leq \frac 1n +a_n.
\end{align*}
Thus $\frac 1\ka \mu_\ell$ is Radon, hence so is $\mu_\ell$.\\
Now consider an arbitrary $\ell\in\la \ck,\tm\ra^\ast$ and the well-known decomposition $\ell = \ell^+-\ell^-$,
where for $\vp\in\ck,\vp\geq 0$
\begin{align}\label{eq:b7}
\ell^+(\vp) := \sup\left\{\ell(g):g\in\ck, 0\leq g\leq\vp\right\}, &\ \ell^-(\vp) := -\inf\left\{\ell(g):g\in\ck, 0\leq g\leq\vp\right\}\\
\text{ and }\ell^+(\vp):= \ell^+(\vp^+)-\ell^+(\vp^-),&\ \ell^-(\vp):= \ell^-(\vp^+)-\ell^-(\vp^-)\text{ for }\vp\in\ck,\notag\\
\vp^+ := \vp\vee0,&\ \vp^- := \vp\wedge 0.\notag
\end{align}
Since for $\vp_1,\vp_2\in\ck$ with $0\leq\vp_1\leq\vp_2$ we have by \eqref{eq:b3}
\begin{align}\label{eq:b8}
|\ell(\vp_1)| \leq p_{\kappa,(K_m),(a_m)}(\vp_1)\leq p_{\kappa,(K_m),(a_m)}(\vp_2),
\end{align}
it follows that the first two quantities defined in \eqref{eq:b7} are finite real numbers.
Therefore, a standard proof (see e.g. \cite[Chap.7, Proof of Theorem 7.8.3]{bogachev}) implies that both $\ell^+,\ell^-$ are linear functions on $\ck$
such that $\ell^+(\vp),\ell^-(\vp)\geq 0$ if $\vp\in\ck,\vp\geq 0$, and $\ell =\ell^+-\ell^-$.
Furthermore, for every $\vp\in\ck$ it follows by \eqref{eq:b8} that
\[|\ell^+(\vp)|\leq \ell^+(|\vp|) = \sup_{0\leq g\leq |\vp|}\ell(g) \leq p_{\kappa,(K_m),(a_m)}(\vp).\]
Hence $\ell^+\in\left(C_\kappa(E),\tau^{\mc M}\right)^\ast$ and hence so is $\ell^-$.\\
Now consider the corresponding measures $\mu_{\ell^+},\mu_{\ell^-}\in M_\ka(E)$ constructed above.\\
Then $\mu_\ell := \mu_{\ell^+}-\mu_{\ell^-}\in M_\ka(E)$ is the required measure corresponding to $\ell$.
To prove the last part of the assertion we note that by \eqref{eq:b7} and \eqref{eq:b8}
\[ \int |\vp|\;\text d\mu_{\ell^\pm}\leq \sup\left\{|\ell(g)|:g\in\ck, 0\leq g\leq|\vp|\right\}\leq p_{\kappa,(K_m),(a_m)}(\vp).\]
Hence \eqref{eq:b1_prime} follows since $|\mu_\ell| \leq \mu_{\ell^+} + \mu_{\ell^-}$.
\end{proof}
}

\begin{corollary}\label{cor_dense}
Let $\mc A\subset C_b(E)$ be an algebra that separates the points of $E$ and such
that for each $x\in E$ there exists $a\in \mc A$ with $a(x)\neq 0$.
Then $\mc A$ is $\tm$-dense in $\ck$.
In particular, $C_b(E)$ is dense in $\ck$.
\begin{proof}
{Since $\tm\subset \tu$ and $\kappa$ is bounded,} we may assume that $\mc A$ is closed {w.r.t. uniform convergence on $E$}.
Let $\ell\in \la\ck,\tm\ra^\ast$ such that
\begin{align}\label{eq:b9}
\ell(f) = 0 \quad \forall f\in \mc A.
\end{align}
If we can show that then $\ell=0$, the assertion follows by Mazur's separation
theorem for locally convex spaces.\\
First we note that by the Stone-Weierstrass theorem there exist polynomials $p_n, n\in\N$,
such that $p_n\to|\cdot|$ uniformly on $[-1,1]$.
Hence for every $f\in \mc A$ we have $|f|\in \mc A$, i.e. $\mc A$ is a vector lattice.
To see this we may assume that $|f|\leq1$.
Then, as $n\to\infty$, $\mathscr A\ni p_n(f) \to |f|$ uniformly on $E$, hence $|f|\in \mathscr A$.\\
By Theorem \ref{thm_B9} there exist $\mu_\ell,\mu_{\ell^+}, \mu_{\ell^-}\in M_\ka(E)$ corresponding to $\ell,\ell^+$ and $\ell^-$ respectively.
Hence by \eqref{eq:b9}
\begin{align}\label{eq:b10}
\int_E f\,\text d\mu_{\ell^+} = \int_E f \,\text d\mu_{\ell^-}\quad \text{for all } f\in \mc A.
\end{align}
Since $\mu_{\ell^+},\mu_{\ell^-}$ are Radon measures there exist increasing compacts $K_m\subset E, m\in\N$, such that for $K_\sigma := \bigcup\limits_{m=1}^\infty K_m$
\begin{align}\label{eq:b11}
\mu_{\ell^+}(E\setminus K_\sigma) = \mu_{\ell^-}(E\setminus K_\sigma) = 0.
\end{align}
Since $E= \bigcup\limits_{a\in\mc A}\{a\neq 0\}$ and $\mc A$ is a vector lattice,
there exist functions $\chi_m\in\mc A$ such that $\chi_m\geq0$ and $\chi_m>0$ on $K_m,m\in\N$.
Since $\mc A$ is an algebra, \eqref{eq:b10} implies
\begin{align}\label{eq:b12}
\int_E f\chi_m\, \text d\mu_{\ell^+} = \int_E f\chi_m\, \text d\mu_{\ell^-} \quad\text{for all } f\in\mc A\cup\{1\}, m\in\N.
\end{align}
Hence by a standard monotone class argument and by \eqref{eq:b11} we obtain that
\begin{align}\label{eq:b13}
\chi_m\mu_{\ell^+} = \chi_m\mu_{\ell^-} \quad \text{on }\sigma(\mc A)\cap K_\sigma \text{ for all } m\in\N.
\end{align}
Below every subset $A\subset E$ is considered to be equipped with the trace topology induced
by $E$ and $\mc B(A)$ denotes the corresponding Borel $\sigma$-algebra on $A$.
Since $K_\sigma$ is a strong Lindel\"of space by Hypothesis \ref{hyp_space} (1),
$K_\sigma\times K_\sigma$ is a Lindel\"of space.
Hence, since by assumption
\[ K_\sigma\times K_\sigma = \bigcup\limits_{f\in\mc A} \left\{(x,y)\in K_\sigma\times K_\sigma | (f(x),f(y))\in\R^2\setminus D\right\},\]
where $D$ denotes the diagonal in $\R^2$, there exists a countable set $\tilde{\mc A}\subset \mc A$ still separating the points of $K_\sigma$.
Then for every $x\in K_\sigma$
\[\{x\} = \bigcap\limits_{f\in\tilde{\mc A}}\{y\in K_\sigma : f(y) = f(x)\}\in \sigma(\tilde{\mc A})\cap K_\sigma\]
and $\sigma\!\la \tilde{\mc A}_{\upharpoonright K_\sigma}\ra$ is countably generated,
where $\tilde{\mc A}_{\upharpoonright K_\sigma}$ denotes the set of restrictions to $K_\sigma$ of all functioins in $\tilde{\mc A}$.
Furthermore, since each $K_m$ is compact and metrizable, hence becomes a complete separable metric space,
it is a standard Borel space, i.e. in the terminology of \cite{preston} a separable type $\mc B$ space.
Then by \cite[Proposition 6.2 (4)]{preston} $K_\sigma$ with its Borel $\sigma$-algebra $\mc B(K_\sigma)$
is also a standard Borel space.
Therefore, since $\sigma(\tilde{\mc A}_{\upharpoonright K_\sigma})\subset \mc B(K_\sigma)$,
we may apply a variant of Kuratowski's theorem (see \cite[Theorem 6.4]{preston}) to conclude
\begin{align}\label{eq:b14}
\sigma(\tilde{\mc A}_{\upharpoonright K_\sigma}) = \mc B(K_\sigma).
\end{align}
Since ${\mc A}_{\upharpoonright K_\sigma}$ consists of continuous functions on $K_\sigma$ and thus by \eqref{eq:b14}
\[ \mc B(K_\sigma) = \sigma({\mc A}_{\upharpoonright K_\sigma}) = \sigma(\mc A)\cap K_\sigma,\]
\eqref{eq:b13} implies that for all $m\in\N$
\[\chi_m\mu_{\ell^+} = \chi_m\mu_{\ell^-}\quad\text{on } \mc B(K_\sigma)\supset\mc B(K_m),\]
which by the strict positivity of every $\chi_m$ in turn implies that
\[\mu_{\ell^+} = \mu_{\ell^-} \quad \text{on } \mc B(K_m).\]
Thus for every open $U\subset E$, since $U\cap K_m\in\mc B(K_m)$,
\begin{align*}
\mu_{\ell^+}(U) &=\mu_{\ell^+}(U\cap K_\sigma) = \lim\limits_{m\to\infty} \mu_{\ell^+}(U\cap K_m)\\
			&= \lim\limits_{m\to\infty} \mu_{\ell^-}(U\cap K_m)=\mu_{\ell^-}(U\cap K_\sigma)\\
			&= \mu_{\ell^-}(U).
\end{align*}
Hence $\mu_{\ell^+}=\mu_{\ell^-}$, so $\ell^+=\ell^-$ and thus $\ell=0$.
\end{proof}
\end{corollary}

\bibliographystyle{abbrv}

\end{document}